\newtheorem{theorem}{Theorem}[section]
\newtheorem{lemma}{Lemma}[section]
\newtheorem{definition}{Definition}[section]
\newtheorem{corollary}{\bf Corollary}[section]
\newtheorem{Question}{\bf Question}[section]
\newcommand{\changed}[1]{\textcolor{black}{#1}}
\newcommand{\GH}{\mathrm{GH}}
\newcommand{\REG}{\mathrm{REG}}
\newcommand{\rk}{\mathrm{rk}}
\newcommand{\cf}{\mathrm{cf}}
\newcommand{\ran}{\mathrm{ran}}
\newcommand{\nacc}{\mathrm{nacc}}
\newcommand{\acc}{\mathrm{acc}}
\newcommand{\cof}{\mathrm{cof}}
\newcommand{\Tr}{\mathrm{Tr}}
\newcommand{\NS}{\mathrm{NS}}
\newcommand{\Mahlo}{\mathrm{Mahlo}}
\newcommand\blfootnote[1]{%
  \begingroup
  \renewcommand\thefootnote{}\footnote{#1}%
  \addtocounter{footnote}{-1}%
  \endgroup
}
\title{Reflecting on Inaccessible J\'onsson Cardinals}
\author{Shehzad Ahmed}
\address{Ohio University\blfootnote{\changed{The revision of this paper took place at Bar-Ilan University, where the author is currently a postdoctoral fellow and being supported by the Zuckerman Postdoctoral Scholars Program as well as the Israel Science Foundation (grant agreement 2066/18).}} \\
	Dept of Math 321 Morton Hall \\
	Athens, Ohio 45701-2979}
\email{sa066513@ohio.edu}
\begin{document}

\begin{abstract}
Given an inaccessible J\'onsson cardinal $\lambda$, a sequence of results due to Shelah from \cite{Sh} and \cite{Sh413} tell us that $\lambda$ must be at least $\lambda\times\omega$-Mahlo. We may then ask ourselves whether we can improve this bound, and unfortunately not much progress has been made on this question since \cite{Sh413}. Part of the problem may be that the proofs involved are rather complicated and make use of somewhat ad hoc machinery. In order to remedy this, we present a survey and new proofs of these results regarding the Mahlo degree of an inaccessible J\'onsson cardinal with the hope of making this material accessible.
\end{abstract}

\maketitle

\section{Introduction}

We begin by recalling the definition of a J\'onsson cardinal, which will be our primary object of study.

\begin{definition}
	\changed{Given cardinals $\lambda$ and $\theta$, we write $\lambda\to[\lambda]^{<\omega}_\theta$ to mean that  for every coloring $F:[\lambda]^{<\omega}\to\theta$, there exists some $A\in [\lambda]^\lambda$ such that $\ran(F\upharpoonright[A]^{<\omega})\subsetneq \theta$. We refer to the set $A$ as a {\em weakly homogeneous} set.}
\end{definition}

\begin{definition}
\changed{We say that an infinte cardinal $\lambda$ is {\em J\'onsson} if $\lambda\to[\lambda]^{<\omega}_\lambda$.}
\end{definition}

It is relatively well known that J\'onssonness is a large cardinal property situated between measurability and the existence of $0^\sharp$ (see e.g. \cite{Ka}). It is, however, a rather mysterious large cardinal property in that not much is known about the behavior of J\'onsson cardinals. For example, the following questions are open:\\

\begin{enumerate}
	\item Is it consistent that there is a singular cardinal $\mu$ such that $\mu^+$ is J\'onsson?
	\item Is it consistent that $\aleph_\omega$ is J\'onsson?
	\item Is there an inaccessible J\'onsson cardinal $\lambda$ which is \changed{greatly Mahlo}?
	\item If $\lambda$ is inaccessible and J\'onsson, how often must stationary subsets of $\lambda$ reflect?\\
\end{enumerate}

Our concern will be primarily with the last question. Our first result along these lines comes from Tryba and Woodin independently (see \cite{Eis1} for a proof).

\begin{theorem}[Tryba-Woodin]\label{Tryba-Woodin}
	If $\lambda$ is a regular J\'onsson cardinal, then every stationary subset of $\lambda$ must reflect at points of arbitrarily large cofinality below $\lambda$.
\end{theorem}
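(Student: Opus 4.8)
The plan is to prove the contrapositive in the strong form used to manufacture J\'onsson algebras: I will show that if some stationary $S\subseteq\lambda$ fails to reflect at cofinalities $\geq\kappa$ for a fixed regular $\kappa<\lambda$, then $\lambda$ is not J\'onsson. Since the conclusion ``arbitrarily large cofinality'' is precisely the assertion that for every regular $\kappa<\lambda$ there is a reflection point of cofinality $\geq\kappa$, it suffices to treat one $\kappa$ at a time. So fix regular $\kappa<\lambda$ and a stationary $S\subseteq\lambda$, and assume toward a contradiction that $S\cap\delta$ is nonstationary in $\delta$ for every $\delta<\lambda$ with $\cf(\delta)\geq\kappa$. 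Fixing a well-order $<^*$ of $H(\lambda^+)$, I first extract from this assumption a ladder system $\langle e_\alpha:\alpha<\lambda\text{ limit}\rangle$ with each $e_\alpha\subseteq\alpha$ club in $\alpha$ of order type $\cf(\alpha)$, chosen $<^*$-least, and with the crucial extra feature that $e_\alpha\cap S=\emptyset$ whenever $\cf(\alpha)\geq\kappa$ (a club disjoint from $S$ exists by hypothesis and may be thinned to the right order type). The goal is to turn $\langle e_\alpha\rangle$ into a coloring $F:[\lambda]^{<\omega}\to\lambda$ admitting no weakly homogeneous set of size $\lambda$, which directly contradicts $\lambda\to[\lambda]^{<\omega}_\lambda$.

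The coloring will be built from minimal walks along $\langle e_\alpha\rangle$ in the style of Todorcevic. For $\beta<\alpha$ the walk from $\alpha$ down to $\beta$ is the finite decreasing sequence $\alpha=\alpha_0>\alpha_1>\cdots>\alpha_n=\beta$ with $\alpha_{i+1}=\min(e_{\alpha_i}\setminus\beta)$, and $F$ will record, on finite tuples, data read off these walks (step counts together with the finitely many ladder-parameters $\max(e_{\alpha_i}\cap\beta)$ encountered). The single local fact that drives everything is elementary: if $\beta$ lies strictly between two consecutive points $p<q$ of some $e_{\alpha_i}$, then both $p$ and $q$ are computed from $\alpha_i$ and $\beta$ by the definable operations $\max(e_{\alpha_i}\cap\beta)$ and $\min(e_{\alpha_i}\setminus(p+1))$; propagating this through the walk lets one recover prescribed ordinals below the target, so that $F$ restricted to tuples drawn from any cofinal set is forced to be onto an end-segment, hence onto $\lambda$.

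To see that no $A\in[\lambda]^\lambda$ is weakly homogeneous, note that the set of limit points of $A$ is club in $\lambda$ and so meets $S$; more to the point, I will locate inside the closure of $A$ a ``trapped'' ordinal $\gamma$ with $\gamma\in\acc(A)\setminus A$ and $\cf(\gamma)\geq\kappa$, so that the $S$-avoidance of the ladders applies all along walks that target $\gamma$. Walking from the (cofinally many) points of $A$ above $\gamma$ down past $\gamma$, the trapped-gap computation of the previous paragraph produces, from tuples in $A$, every ordinal in a final segment of $\gamma$ and then every ordinal below $\lambda$; thus $\ran\bigl(F\restriction[A]^{<\omega}\bigr)=\lambda$ and $A$ is not weakly homogeneous. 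As this holds for every $A\in[\lambda]^\lambda$, the coloring $F$ witnesses $\lambda\not\to[\lambda]^{<\omega}_\lambda$, contradicting J\'onssonness and completing the proof.

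I expect the genuine obstacle to be the full-range verification above, and specifically the case analysis at the bottom of a walk. When the target $\gamma$ happens to be an \emph{accumulation} point of the relevant ladder $e_{\alpha_i}$ rather than sitting in a gap, there is no immediate predecessor to code against; one must instead peel off accumulation points by a Cantor--Bendixson style descent on $e_{\alpha_i}$ and argue that the descent terminates at a successor stage, yielding a coded predecessor after all. This bookkeeping---reconciling the high-cofinality, $S$-avoiding accumulation behavior with the need for a definable predecessor---is exactly the ``ad hoc'' combinatorial core that the introduction promises to streamline, and it is where the hypothesis that $S$ genuinely fails to reflect (not merely that a single proper elementary submodel exists) is indispensable. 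It is instructive to phrase the contradiction model-theoretically: applying J\'onssonness to $F$ yields $M\prec(H(\lambda^+),\in,<^*)$ with $|M\cap\lambda|=\lambda$ but $\lambda\not\subseteq M$, and setting $\gamma=\min(\lambda\setminus M)$ the walks toward $\gamma$ must terminate, which forces $\gamma$ to be definable from $M$-parameters and hence $\gamma\in M$, the desired absurdity.
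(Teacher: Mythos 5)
Your overall strategy---prove the contrapositive by fixing a ladder system whose high-cofinality ladders avoid $S$ and then using walks to defeat weak homogeneity---is the classical Tryba/Todorcevic route, and it can be made to work; note that the paper itself does not prove this theorem but cites \cite{Eis1} for it. However, your sketch leaves the actual core unproved and, where it gestures at the verification, it is wrong in two concrete places. First, the ``trapped ordinal'' device is miscalibrated. The $S$-avoidance hypothesis ($e_\alpha\cap S=\emptyset$ when $\cf(\alpha)\geq\kappa$) constrains walks whose \emph{target lies in $S$}: it guarantees that a walk can only land on a point of $S$ via a ladder of small order type or a successor step. It says nothing about walks targeting an ordinal $\gamma$ merely because $\cf(\gamma)\geq\kappa$. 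So you need $\gamma\in S\cap\acc(A)$, and elements of $S$ may all have countable cofinality, so you cannot also demand $\cf(\gamma)\geq\kappa$. Worse, an ordinal $\gamma\in\acc(A)\setminus A$ with $\cf(\gamma)\geq\kappa$ need not exist at all: take $A=S^\lambda_{\geq\kappa}\cup\{\alpha+1:\alpha<\lambda\}$, for which every member of $\acc(A)\setminus A$ has cofinality $<\kappa$. Second, the claim that the local gap computation forces $F$ to be onto a final segment of $\lambda$ on every cofinal set is precisely the theorem to be proved, not a propagation exercise; if it followed from the mere existence of a ladder system, no regular cardinal could be J\'onsson. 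This is where the genuine machinery (Todorcevic's square-bracket/oscillation analysis, or the elementary-submodel argument of \cite{Eis1}) lives, and calling it ``bookkeeping'' misjudges where the difficulty sits.

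The closing model-theoretic paragraph has the same gap in sharper form. Let $M$ be a J\'onsson model containing $S$ and the ladder system, and let $\gamma=\min(\lambda\setminus M)$. Walks from points of $M$ toward $\gamma$ do stay in $M$ and terminate, so $\gamma\in e_{\alpha_i}$ for some $\alpha_i\in M$; but this does \emph{not} make $\gamma$ definable from parameters in $M$. To conclude $\gamma\in M$ you need $e_{\alpha_i}\subseteq M$, which requires $\ot(e_{\alpha_i})\subseteq M$, i.e.\ you must first arrange $\kappa\subseteq M$ --- a nontrivial absorption lemma for regular J\'onsson cardinals that you never state or prove. Even granting it, the dichotomy is: either $\ot(e_{\alpha_i})<\kappa$, giving $\gamma\in M$ (contradiction), or $\cf(\alpha_i)\geq\kappa$, in which case $e_{\alpha_i}\cap S=\emptyset$ and you learn only that $\gamma\notin S$ --- no contradiction whatsoever, since $\min(\lambda\setminus M)$ has no reason to lie in $S$. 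What this argument actually proves is that no point of $S$ outside $M$ is a limit of points of $M$; the missing (and essential) step is to produce a point of $S$ outside $M$ that \emph{is} a limit of $M\cap\lambda$, or some substitute for one (in \cite{Eis1} this is handled by reflecting $S$ at the ordinal $\beta_M(\delta)$, in the spirit of the paper's \cref{Reflection at beta delta}, rather than by trying to force $\lambda\subseteq M$ directly). As it stands, the proposal assembles the right ingredients but omits the argument that makes them collide.
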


From here, we have two questions we would like to consider in the case that $\lambda$ is a regular J\'onsson cardinal:\\

\begin{enumerate}
	\item How much simultaneous reflection does $\lambda$ enjoy?
	\item How often do stationary subsets of $\lambda$ reflect?\\
\end{enumerate}

In the case that $\mu$ is singular and $\mu^+$ is J\'onsson, \changed{then any fewer than $\cf(\mu)$-many stationary subsets of $\mu^+$ reflect simultaneously (this follows from work of Eisworth in \cite{Eis3})}. Here, we will consider these questions only in the context of inaccessible J\'onsson cardinals, where the known results seem very sparse.

Shelah has shown, in \cite{Sh413}, that if $\lambda$ is an inaccessible J\'onsson cardinal, then $\lambda$ must be $\lambda\times\omega$-Mahlo. Beyond that, however, we are in the dark. One potential culprit for the lack of progress on this problem is that the proof of the aforementioned result is rather complicated. Our goal here is to survey and provide new proofs of several known results, all of which are due to Shelah from \cite{Sh} or \cite{Sh413}, regarding the Mahlo degree of an inaccessible J\'onsson cardinal. 

In order to do this, we begin by collecting a number of known results regarding J\'onsson cardinals and club guessing in section 2. In particular, we isolate the notion of a J\'onsson model and recast several common techniques as lemmas about J\'onsson models. While this may seem like an unimportant distinction, this recasting will allow us to further analyze the relationship between club guessing sequences and J\'onsson models.

In section 3, we briefly survey Shelah's notion of the rank of a stationary set from Chapter IV of \cite{Sh} and \cite{Sh506}. It turns out that this notion essentially comes \changed{from} the iterated trace operator, which was introduced by Jech in \cite{Jech84}. The main result of this section is that, the rank of an increasing union of sets $\langle S_i : i<\theta\rangle$ can be bounded by the ranks of the sets $S_i$ plus some error coming from the Galvin-H\changed{a}jnal rank of a particular function.

In section 4, we develop the machinery of \changed{co}-J\'onsson filters, and explore their connection with stationary reflection. Of note is that \changed{co}-J\'onsson filters can be shown to exist on J\'onsson successors of singulars, and that many of the combinatorial consequences of such a cardinal can be derived directly from the existence of a J\'onsson filter. In other words, any \changed{J\'onsson} cardinal which has a \changed{co}-J\'onsson filter must behave similarly to a J\'onsson successor of a singular.

Finally, in section 5, we use the previously developed machinery to give a new, self-contained proof of Shelah's result from \cite{Sh413} that any inaccessible J\'onsson cardinal must be at least $\lambda\times\omega$-Mahlo. In \cite{Sh} and \cite{Sh413}, Shelah makes use of an ideal called $id^j(\bar C)$ (which is defined below), to put a lower bound on the Mahlo rank of an inaccessible J\'onsson cardinal.

\begin{definition}
	Suppose that $\lambda$ is an inaccessible J\'onsson cardinal, $S\subseteq \lambda$ is unbounded, and $\bar C=\langle C_\delta : \delta\in S\rangle$ is a sequence of sets with $C_\delta\subseteq\delta$. We define $id^j(\bar C)$ to be the family of sets $A$ which satisfy the following condition: For every regular $\chi>\lambda$, and $x\in H(\chi)$, there is an $n<\omega$ and a sequence $\bar M=\langle M_i : i<n\rangle$ such that:
	
	\begin{enumerate}
		\item For each $i<n$, $M_i$ is an elementary submodel of $H(\chi)$ such that:
		\begin{enumerate}
			\item $\lambda, x\in M_i$;
			\item $|M_i\cap\lambda|=\lambda$;
			\item $\lambda\not\subseteq M_i$;
		\end{enumerate} 
		\item For some $\alpha^*<\lambda$, for no $\delta\in S\setminus \alpha^*$ do we have:
		\begin{enumerate}
			\item $\delta=\sup(M_i\cap\delta)$ for $i<n$ and
			\item for every $\beta<\delta$, for some $\alpha$ we have: $\alpha\in nacc(C_\delta)\setminus \beta$, $\cf(\alpha)\geq\beta$, and for every $i<n$ we have that either $\changed{\alpha}\in M_i$ or $\min (M_i\setminus\alpha)$ is singular.
		\end{enumerate} 
	\end{enumerate} 
\end{definition}

We eschew the (somewhat ad hoc) use of this ideal in favor of leveraging \changed{co}-J\'onsson filters and the relationship between club guessing and J\'onsson models. The advantage of avoiding $id_j(\bar C)$ is that the proof becomes much clearer, and the interplay between J\'onssonness and Mahlo rank is brought to the forefront.

\subsection{Notation and Conventions}

\changed{Our notation and conventions are mostly standard, though we list the exceptions to this. For a regular cardinal $\kappa$, we use $\cof(\kappa)$ to denote the class of ordinals with cofinality greater than $\kappa$. We will use $\REG$ to denote the class of regular cardinals. Given an ordinal $\delta$ and a regular cardinal $\kappa$, we use $S^\delta_\kappa$ to denote the collection of ordinals below $\delta$ of cofinality $\kappa$. Adding inequalities (i.e. $\cof(>\kappa)$ have the obvious meaning. For us, an inaccessible cardinal is a {\em weakly} inaccessible cardinal, and similarly Mahlo cardinals {\em weakly} Mahlo. For any regular cardinal $\chi$, we will use $<_\chi$ to denote a well order of $H(\chi)$. For a set of ordinals $C$, we use $\acc(C)$ to denote the set of accumulation points of $C$ which are in $C$, and $\nacc(C)$ denotes the set of non-accumulation points. For an ordinal $\delta$ of uncountable cofinality, we denote the non-stationary ideal on $\delta$ by $\mathrm{NS}_\delta$. Given an ideal $I$, we use $I^+$ to denote the collection of $I$-positive sets. Finally, it's worth noting that ideals need not be proper for us.}

\section{J\'onsson Cardinals}

In this section, we collect a number of basic results regarding J\'onsson cardinals which we will be making use in later sections. We first recall an alternative characterization in terms of elementary submodels. The following appears as Theorem 5.3 of \cite{Eis1}.

\begin{lemma}[Folklore]\label{Jonsson model}
	For a cardinal $\lambda$, the following are equivalent:
	
	\begin{enumerate}
		\item $\lambda$ is J\'onsson.
		\item There is some regular $\chi>\lambda$ and $M\prec (H(\chi), \in, <_\chi)$ such that
		
		\begin{enumerate}
			\item $|M\cap\lambda|=\lambda$.
			\item $\lambda\in M$.
			\item $\lambda\not\subseteq M$.
		\end{enumerate}
	\end{enumerate}
\end{lemma}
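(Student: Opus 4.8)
The plan is to prove the two implications separately, dispatching the converse by contraposition and reserving a Skolem-term coloring argument for the forward direction.

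For $(2)\Rightarrow(1)$ I would argue contrapositively: assuming $\lambda$ is not J\'onsson, I would show that no model can enjoy all three properties. Fix any regular $\chi>\lambda$ and any $M\prec(H(\chi),\in,<_\chi)$ with $\lambda\in M$ and $|M\cap\lambda|=\lambda$; the goal is to force $\lambda\subseteq M$. The key observation is that the assertion ``$\lambda$ is J\'onsson'' is decided already in $H(\lambda^+)$ — every coloring $F:[\lambda]^{<\omega}\to\lambda$ and every candidate weakly homogeneous set lies in $H(\lambda^+)$ — so it is absolute between $V$ and $H(\chi)$. Hence $H(\chi)\models$``$\lambda$ is not J\'onsson'', and by elementarity so does $M$. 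Extracting a witness inside $M$, I obtain some $F^*\in M$ which, again by absoluteness, genuinely has no weakly homogeneous set. Since $F^*\in M$, the set $a:=M\cap\lambda$ is closed under $F^*$, so $\ran(F^*\upharpoonright[a]^{<\omega})\subseteq a$; but $a\in[\lambda]^\lambda$, so the absence of a weakly homogeneous set forces this range to be all of $\lambda$, whence $\lambda\subseteq a\subseteq M$. This contradicts property (c), completing the direction. The pleasant point is that although an externally given coloring need not belong to $M$, elementarity manufactures an internal witness for free.

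For $(1)\Rightarrow(2)$ I would fix any regular $\chi>\lambda$ and exploit that the well-order $<_\chi$ furnishes definable Skolem functions for $(H(\chi),\in,<_\chi)$; let $\langle\tau_n:n<\omega\rangle$ enumerate the resulting Skolem terms, which we may take closed under composition. I would then cook up a single coloring $F:[\lambda]^{<\omega}\to\lambda$ whose values, as the argument ranges over $[A]^{<\omega}$, sweep out precisely the ordinals below $\lambda$ lying in $\mathrm{Sk}(A\cup\{\lambda\})$. Concretely, given $s=\{\beta_0<\dots<\beta_{k-1}\}$ I would read off from the \emph{largest} element $\beta_{k-1}$, via a fixed surjection of $\lambda$ onto the countable set of relevant codes, an index $n$ together with a prescription for forming an argument tuple out of $\beta_0,\dots,\beta_{k-2}$ and the constant $\lambda$, and set $F(s)$ equal to that value of $\tau_n$ when it is an ordinal below $\lambda$ and to $0$ otherwise. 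Applying $\lambda\to[\lambda]^{<\omega}_\lambda$ to $F$ yields $A\in[\lambda]^\lambda$ and a color $\gamma<\lambda$ omitted by $F\upharpoonright[A]^{<\omega}$. Setting $M:=\mathrm{Sk}(A\cup\{\lambda\})$ gives $M\prec(H(\chi),\in,<_\chi)$ with $\lambda\in M$ and $A\subseteq M\cap\lambda$, so $|M\cap\lambda|=\lambda$; and since every ordinal $<\lambda$ in $M$ has the form $\tau_n(\vec a)$ for a tuple $\vec a$ from $A\cup\{\lambda\}$, hence is realized as $F(s)$ for some $s\in[A]^{<\omega}$, the omitted $\gamma$ cannot lie in $M$, giving $\lambda\not\subseteq M$.

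The main obstacle is the bookkeeping in the forward direction: one must design $F$ so that its range over $[A]^{<\omega}$ captures the entire ordinal part of the Skolem hull \emph{uniformly in the unknown set} $A$. Encoding the term index and the argument-selection into the top element of $s$ rather than the bottom is what makes this work, since it lets one realize any value $\tau_n(\vec a)$ by drawing a header from the unbounded set $A$ lying above all the $A$-entries of $\vec a$; taking the $\tau_n$ closed under composition is what upgrades single-step capture to capture of the full hull. Everything else is routine, and the converse direction's only apparent difficulty — that a prescribed coloring need not sit inside the model — evaporates thanks to the absoluteness argument above.
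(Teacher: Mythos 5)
Your converse direction $(2)\Rightarrow(1)$ is correct: since every coloring $F:[\lambda]^{<\omega}\to\lambda$ and every candidate weakly homogeneous set lies in $H(\lambda^+)\subseteq H(\chi)$, J\'onssonness is indeed absolute between $H(\chi)$ and $V$, elementarity produces an internal witness $F^*\in M$, and $M\cap\lambda$ is closed under $F^*$ because finite subsets of $M$ are elements of $M$. This is the standard argument and needs no repair.

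The forward direction $(1)\Rightarrow(2)$, however, has a genuine gap at the words ``hence is realized as $F(s)$ for some $s\in[A]^{<\omega}$.'' Your coloring $F$, and in particular the fixed surjection $g$ of $\lambda$ onto the countable set of codes, must be specified \emph{before} the J\'onsson property hands you $A$, and you get no control over which ordinals $A$ contains. To realize a value $\tau_n(\vec a)$ whose $A$-entries are $\beta_0<\dots<\beta_{k-2}$, you must find a header $\beta_{k-1}\in A$ above $\beta_{k-2}$ with $g(\beta_{k-1})$ equal to the desired code; but $A$ may avoid the $g$-preimage of that code entirely. This is not a far-fetched scenario: the countably many preimages of $g$ partition $\lambda$, so either some preimage has cardinality $\lambda$ (and is itself a size-$\lambda$ set avoiding every other code), or else every preimage has size $<\lambda$ and the complement of any single preimage is a size-$\lambda$ set avoiding that code. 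For such an $A$, headers drawn from $A$ never produce the code in question, the inclusion $\mathrm{Sk}(A\cup\{\lambda\})\cap\lambda\subseteq\ran(F\upharpoonright[A]^{<\omega})$ breaks down, and you can no longer conclude that the omitted color $\gamma$ lies outside $M$. Weak homogeneity of $A$ simply does not imply the regularity property of $A$ (meeting every code's preimage cofinally) that your realization step silently assumes.

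The repair is classical, and it is exactly the device used in this paper's proof of \cref{Coloring Equivalence}: code the Skolem-term index into the \emph{cardinality} of $s$ rather than into the value of any of its elements --- arrange the terms so that the $n$-th term is $k_n$-ary with $k_n\leq n$, and on sets of size $n$ apply $\tau_n$ to the first $k_n$ elements (with $\lambda$ substituted as needed). This mechanism is uniform in $A$: given arguments $\beta_0<\dots<\beta_{k_n-1}$ from $A$, you pad upward with $n-k_n$ further elements of $A$, which exist because any $A\in[\lambda]^\lambda$ is unbounded in $\lambda$ ($\lambda$ being a cardinal). With that single change --- size-coding in place of header-value-coding --- the rest of your argument (closure of the terms under composition, permutation and dummy variables, and the final computation that $\gamma\notin\mathrm{Sk}(A\cup\{\lambda\})$) goes through as you wrote it.
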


In the proof of the above lemma, one can easily show that we have some control over the $M$ that we produce. In particular, we can also ask, for some $x\in H(\chi)$ that $x$ is also in $M$. This leads us to the following definition.

\begin{definition}
	Suppose that $\lambda$ is a J\'onsson cardinal, $\chi>\lambda$ is sufficiently large and regular, and $x\in H(\chi)$. We say that $M$ is an \changed{\em $x$-J\'onsson model} if the following hold:
	
	\begin{enumerate}
		\item $M\prec ((H(\chi),\in, <_\chi)$
		\item $|M\cap\lambda|=\lambda$.
		\item $\lambda, x\in M$.
		\item $\lambda\not\subseteq M$.
	\end{enumerate}
\end{definition}

If we do not need that a particular $x$ sits inside $M$, we will just say that $M$ is a J\'onsson model. All of our J\'onsson models will be elementary submodels of some sufficiently large and regular $\chi$. At this point, we would like to highlight the interaction between J\'onsson models and club guessing.

\begin{definition}
	Given an ordinal $\lambda$ of uncountable cofinality, and a stationary $S\subseteq\lambda$, we say that a sequence $\bar C=\langle C_\delta : \delta\in S\rangle$ is an \changed{\em $S$-club system} if each $C_\delta$ is a club subset of $\delta$.
\end{definition}

\changed{It is worth pointing out that $S$-club systems are also referred to as ladder systems}. We will be using these $S$-club systems to define a particular sort of club guessing found in \cite{Sh}. Note that we allow $\delta$ to be of countable cofinality, in which case $C_\delta$ simply needs to be unbounded in $\delta$. We now associate a sequence of ideals to a given $S$-club system.

\begin{definition}
	Suppose that $\lambda$ is a regular cardinal, $S\subseteq\lambda$ is stationary, and $\bar C=\langle C_\delta : \delta\in S\rangle$ is an $S$-club system. For each $\delta\in S$, we let $I_\delta$ be the ideal on $C_\delta$ generated by the following sets:\\
	
	\begin{enumerate}
		\item The \changed{accumulation} points of $C_\delta$,
		\item bounded subsets of $C_\delta$,
		\item and sets of the form $\{\alpha \in C_\delta : \cf(\alpha)<\gamma \}$ for each regular $\gamma<\delta$.\\
	\end{enumerate}
	
	We let $\bar I=\langle I_\delta : \delta\in S\rangle$.
\end{definition} 

In \cite{Sh}, Shelah did not fix a sequence of ideals ahead of time, and instead left open the option of using any sequence of ideals on $\bar C$. For our purposes, we will only need the above sequence. We can now define the sort of club guessing that we are interested in.

\begin{definition}
	Suppose that $\lambda$ is a regular cardinal, $S\subseteq\lambda$ is stationary, and $\bar C=\langle C_\delta : \delta\in S\rangle$ is an $S$-club system. We say that $\bar C$ is a \changed{\em club guessing sequence} if, for every club $E\subseteq \lambda$, there is a stationary $S'\subseteq S\cap E$ such that
	
	\begin{equation*}
	C_\delta\cap E\notin I_\delta
	\end{equation*}
	\\
	for each $\delta\in S'$. \changed{That is, for every regular $\gamma<\delta$ and every $\alpha<\delta$, there is some $\beta \in \nacc(C_\delta)\cap E$ such that $\beta > \alpha$ and $\cf(\beta)\geq \gamma$.}
\end{definition}

The above notion of club guessing turns out to be exactly what we need when working with J\'onsson cardinals. \changed{In particular, we will need to consider ideals coming from club guessing.}

\begin{definition}
	\changed{Suppose that $\lambda$ is a regular cardinal, $J$ is an ideal on $\lambda$ extending the non-stationary ideal, $S\subseteq\lambda$ is $I$-positive, and $\bar C=\langle C_\delta : \delta\in S\rangle$ is an $S$-club system. We define an ideal $I(\bar C, J)$ by saying that $A\in I(\bar C, J)$ if and only if the sequence $\bar C\cap A=\langle C_\delta\cap A : \delta\in S \rangle$ is not a club guessing sequence modulo $J$. That is, there is some club $E\subseteq \lambda$ such that the set}
	
	\begin{equation*}
	\changed{\{\delta\in S : \nacc(C_\delta)\cap A\cap E \notin I_\delta\}}
	\end{equation*}
	\\
	\changed{is in $J$}.
\end{definition}

\begin{definition}
	Suppose that $\lambda$ is a J\'onsson cardinal, and let $M$ be a J\'onsson model witnessing this. We define the map $\beta_M:\lambda\to M\cap \lambda$ by 
	
	\begin{equation*}
	\beta_M(\alpha)=\min(M\cap\lambda)\setminus\alpha.
	\end{equation*}
\end{definition}

Regarding the map $\beta_M$, note that if $\alpha\in M$, then $\beta_M(\alpha)=\alpha$. Otherwise, $\beta_M(\alpha)$ must be limit of uncountable cofinality. This leads us to the following useful fact about the map $\beta_M(\delta)$.

\begin{lemma}\label{Reflection at beta delta}
	Suppose $\lambda$ is J\'onsson, and let $M$ be a J\'onsson model for $\lambda$. If $\delta=\sup(M\cap\delta)$, $\delta\neq\beta_M(\delta)$, and $d\subseteq\beta_M(\delta)$ is club in $\beta_M(\delta)$ with $d\in M$, then $\delta\in\acc(d)$.
\end{lemma}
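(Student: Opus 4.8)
The plan is to unpack the hypotheses and reduce the conclusion $\delta\in\acc(d)$ to two assertions: that $\delta$ is a limit point of $d$, and that $\delta\in d$. I would begin by observing that since $\beta_M(\delta)\geq\delta$ always holds, the hypothesis $\delta\neq\beta_M(\delta)$ forces $\beta_M(\delta)>\delta$, and hence $\delta\notin M$ (for if $\delta\in M$ we would have $\beta_M(\delta)=\delta$). This is the crucial feature: it guarantees genuine room strictly between $\delta$ and $\beta_M(\delta)$, and it is exactly what will allow $\delta$ itself to lie in $d$ rather than merely be approached from below. I also record that $d\subseteq\beta_M(\delta)$, read as a subset of the ordinal $\beta_M(\delta)$, means every element of $d$ is $<\beta_M(\delta)$.

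The heart of the argument is showing $\delta=\sup(d\cap\delta)$, for which I would use elementarity together with the definition of $\beta_M$. Fix an arbitrary $\alpha<\delta$. Since $\delta=\sup(M\cap\delta)$, I can choose $\gamma\in M\cap\delta$ with $\gamma>\alpha$. Because $d,\gamma\in M$ and $d$ is (recognized in $M$ to be) unbounded in $\beta_M(\delta)$, the ordinal $\eta:=\min(d\setminus\gamma)$ is definable from parameters in $M$, so $\eta\in M$; moreover $\eta\in d$ and $\eta\geq\gamma>\alpha$. The key step is to see that $\eta<\delta$: if instead $\eta\geq\delta$, then $\eta$ would be a member of $M\cap\lambda$ lying above $\delta$, whence $\beta_M(\delta)\leq\eta$; but $\eta\in d\subseteq\beta_M(\delta)$ gives $\eta<\beta_M(\delta)$, a contradiction. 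Thus $\eta\in d\cap(\alpha,\delta)$, and as $\alpha<\delta$ was arbitrary, $\delta$ is a limit point of $d$.

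Finally, I would invoke closedness. Since $d$ is club in $\beta_M(\delta)$ it is closed below $\beta_M(\delta)$, and $\delta$ is a limit point of $d$ with $\delta<\beta_M(\delta)$; therefore $\delta\in d$. Combined with the previous paragraph this yields $\delta\in\acc(d)$, as desired.

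I expect the main obstacle to be the step showing $\eta<\delta$, which is where all the hypotheses conspire: one uses elementarity to pull $\min(d\setminus\gamma)$ into $M$, the definition of $\beta_M$ to turn ``$\eta\geq\delta$ and $\eta\in M$'' into ``$\eta\geq\beta_M(\delta)$'', and then $d\subseteq\beta_M(\delta)$ to derive the contradiction. It is worth remarking that the hypothesis $\delta\neq\beta_M(\delta)$ is not merely convenient but necessary: were $\delta=\beta_M(\delta)$, that is $\delta\in M$, then $d\subseteq\delta$ and $\delta\notin d$, so the conclusion would fail even though $\delta$ would still be a limit point of $d$.
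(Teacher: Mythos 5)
Your proof is correct. The paper actually states \cref{Reflection at beta delta} without proof, so there is nothing to compare it against line by line; your argument---using elementarity to pull $\min(d\setminus\gamma)$ into $M$, the minimality of $\beta_M(\delta)$ to force that ordinal below $\delta$, and then closedness of $d$ at the limit point $\delta<\beta_M(\delta)$ to get membership (which the paper's definition of $\acc$ requires)---is the natural one and is exactly in the spirit of the paper's proof of the companion \cref{club indiscenibility}, which likewise plays elementarity against the minimality of $\beta_M$.
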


We will also often make use of the following result, which can be thought of as a companion to the previous.

\begin{lemma}\label{club indiscenibility}
	Let $\lambda$ be J\'onsson, and let $M$ be a J\'onsson model for $\lambda$. For $\alpha\in \lambda$, if $E\in M$ is a $\leq\beta_M(\alpha)$-closed set of ordinals with $\alpha\in E$, then $\beta_M(\alpha)\in E$.
\end{lemma}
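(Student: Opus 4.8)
The plan is to reduce immediately to the interesting case and then run a minimality argument driven by elementarity. If $\alpha\in M$ then $\beta_M(\alpha)=\alpha$, and since $\alpha\in E$ by hypothesis there is nothing to prove; so I would assume $\alpha\notin M$ and set $\gamma:=\beta_M(\alpha)$. By the remark following the definition of $\beta_M$, we then have $\gamma>\alpha$ and $\gamma$ is a limit ordinal of uncountable cofinality. Crucially, $\gamma\in M\cap\lambda$ by the very definition $\beta_M(\alpha)=\min((M\cap\lambda)\setminus\alpha)$, and of course $\alpha\in E\cap\gamma$.

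The heart of the argument is to show that $E\cap\gamma$ is cofinal in $\gamma$. First I would put $\delta:=\sup(E\cap\gamma)$. Since both $E$ and $\gamma$ lie in $M$, and the supremum of a definable set is computed the same way in $M$ and in $H(\chi)$, elementarity gives $\delta\in M$; moreover $\delta\leq\gamma<\lambda$, so in fact $\delta\in M\cap\lambda$. Now suppose toward a contradiction that $\delta<\gamma$. Because $\alpha\in E\cap\gamma$ we have $\alpha\leq\delta$, so $\delta$ is an element of $M\cap\lambda$ lying in the interval $[\alpha,\gamma)$. This contradicts the minimality built into $\gamma=\min((M\cap\lambda)\setminus\alpha)$. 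Hence $\delta=\gamma$, i.e.\ $E\cap\gamma$ is cofinal in $\gamma$.

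To finish, I would simply invoke the closure of $E$. Since $\cf(\gamma)\leq\gamma=\beta_M(\alpha)$ and $E\cap\gamma\subseteq E$ is cofinal in $\gamma$, the assumption that $E$ is $\leq\beta_M(\alpha)$-closed forces $\gamma=\sup(E\cap\gamma)\in E$, which is exactly the desired conclusion.

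The only genuine subtlety --- and the step I would be most careful about --- is the placement of $\delta=\sup(E\cap\gamma)$ inside $M$, since everything hinges on being able to contradict the minimality defining $\beta_M(\alpha)$; this works precisely because $E\in M$ is given and $\gamma\in M$ comes for free from the definition of $\beta_M$. The companion fact that $\gamma$ is a limit of uncountable cofinality (again from the remark on $\beta_M$) is what makes ``cofinal subset of $\gamma$'' meaningful and what lets the closure hypothesis apply; without it a successor value of $\gamma$ could leave $\sup(E\cap\gamma)$ falling short of $\gamma$.
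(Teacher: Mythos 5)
Your proof is correct and takes essentially the same approach as the paper's: both arguments place $\sup(E\cap\beta_M(\alpha))$ inside $M$ via elementarity, play it off against the minimality defining $\beta_M(\alpha)$, and use the $\leq\beta_M(\alpha)$-closure of $E$ to conclude. The only difference is organizational --- the paper runs a single contradiction from $\beta_M(\alpha)\notin E$, whereas you first establish that $E\cap\beta_M(\alpha)$ is cofinal in $\beta_M(\alpha)$ and then apply closure directly.
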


\begin{proof}
	Suppose otherwise, then clearly $\alpha<\beta_M(\alpha)$ and $\sup(E\cap\beta_M(\alpha))\in M\cap\beta_M(\alpha)$ since $E$ is closed under sequences of length $\leq\beta_M(\alpha)$. But then, $\sup(E\cap\beta_M(\alpha))<\alpha$ by the minimality of $\beta_M(\alpha)$, which is contradiction.
\end{proof}

The previous two lemmas tell us that, if $M$ is a J\'onsson model, and $\delta\notin M$, then clubs in $M$ have difficulty distinguishing between $\delta$ and $\beta_M(\delta)$. Our next lemma provides us with the connection between J\'onsson models and club guessing. 

\begin{lemma}[Lemma 5.20 of \cite{Eis1}]\label{Swallowing Ladders}
	Suppose that $\lambda$ is a regular J\'onsson cardinal and $M$ is a witnessing J\'onsson model, and let
	
	\begin{equation*}
	E=\{\delta<\lambda : \delta=\sup (M\cap \delta)\}.
	\end{equation*}
	\\
	If $S\subseteq\lambda$ is stationary and $\bar C=\langle C_\delta : \delta\in S\rangle$ is an $S$-club system with $S,\bar C\in M$, then for every $\delta\in S\setminus M$, 
	
	\begin{equation*}
	\{\alpha\in \nacc(C_\delta) : \cf(\alpha)>\cf(\beta_M(\delta))\}\cap E\subseteq M.
	\end{equation*} 
	\\
	Further, if $\delta\in S\cap M$, then $\nacc(C_\delta)\cap E\subseteq M$.
\end{lemma}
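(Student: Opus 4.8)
The plan is to use the map $\beta_M$ together with \Cref{Reflection at beta delta} and \Cref{club indiscenibility} to control where nonaccumulation points of $C_\delta$ can land. The key observation is that, because $S, \bar C \in M$, for any $\delta \in S \cap M$ we have $C_\delta \in M$ as well (by elementarity applied to the function $\delta \mapsto C_\delta$), and hence any {\em bounded} subset of $\delta$ that is definable from $C_\delta$ over $M$ lives inside $M$. The genuinely interesting case is $\delta \in S \setminus M$, where $C_\delta$ itself need not be in $M$, so I cannot argue directly; instead I will replace $\delta$ by $\beta_M(\delta)$ and work with the {\em club} $C_{\beta_M(\delta)}$ (or rather the whole sequence $\bar C$, which is in $M$) to capture the relevant nonaccumulation points.

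First I would treat the easy case $\delta \in S \cap M$. Here $C_\delta = (\bar C)_\delta \in M$ by elementarity, so $\nacc(C_\delta) \in M$. Fix $\alpha \in \nacc(C_\delta) \cap E$; I want $\alpha \in M$. Since $\alpha \in \nacc(C_\delta)$, letting $\gamma = \sup(C_\delta \cap \alpha) < \alpha$, the ordinal $\gamma$ is definable in $M$ from $C_\delta$ and $\alpha$, but more to the point $\alpha$ is the {\em successor element} of $C_\delta$ above $\gamma$. The cleaner route: the order type enumeration $e_\delta$ of $C_\delta$ lies in $M$, and $\alpha$ is an element of $C_\delta$, so it suffices to show the index of $\alpha$ in $C_\delta$ belongs to $M$. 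Because $\alpha \in E$ we have $\alpha = \sup(M \cap \alpha)$, and since $C_\delta \cap \alpha \in M$ is an initial segment of $C_\delta$, its order type $\ot(C_\delta \cap \alpha) \in M$; this index is a nonaccumulation index, hence $\alpha = e_\delta(\ot(C_\delta\cap\alpha))$ is the value of a function in $M$ at an argument in $M$, so $\alpha \in M$. This handles the final assertion of the lemma.

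For the main case $\delta \in S \setminus M$ with $\delta = \sup(M \cap \delta)$, fix $\alpha \in \nacc(C_\delta) \cap E$ with $\cf(\alpha) > \cf(\beta_M(\delta))$, and suppose toward a contradiction that $\alpha \notin M$, so $\alpha < \beta_M(\alpha) \le \beta_M(\delta)$. The heart of the argument is to run \Cref{Reflection at beta delta}: since $\delta = \sup(M\cap\delta)$ and $\delta \ne \beta_M(\delta)$, any club $d \subseteq \beta_M(\delta)$ with $d \in M$ has $\delta \in \acc(d)$. I would apply this to the club $d = C_{\beta_M(\delta)}$, noting $\beta_M(\delta) \in M$ (it is the least element of $M \cap \lambda$ above $\delta$, hence in $M$) and $\bar C \in M$ give $C_{\beta_M(\delta)} \in M$; thus $\delta \in \acc(C_{\beta_M(\delta)})$. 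Combining this with the cofinality hypothesis $\cf(\alpha) > \cf(\beta_M(\delta))$ and \Cref{club indiscenibility} (which forces $\beta_M(\alpha)$ into any suitably closed set of $M$ containing $\alpha$) yields that $\alpha$ and $\beta_M(\alpha)$ cannot be separated by the tail of $C_\delta$, contradicting $\alpha \in \nacc(C_\delta)$ with the large cofinality.

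The main obstacle, and the step I expect to require the most care, is precisely this last interplay: pinning down exactly which club or closed set in $M$ to feed into \Cref{Reflection at beta delta} and \Cref{club indiscenibility} so that the cofinality condition $\cf(\alpha) > \cf(\beta_M(\delta))$ does the work of forcing $\alpha = \beta_M(\alpha) \in M$. The cofinality hypothesis is there to guarantee that $\alpha$ is {\em not} itself an accumulation-type limit that could be shifted up to $\beta_M(\delta)$; rather, its high cofinality relative to $\beta_M(\delta)$ should force $\beta_M(\alpha) = \alpha$. I would verify this by checking that if $\alpha < \beta_M(\alpha)$ then $\beta_M(\alpha)$ has cofinality $\le \cf(\beta_M(\delta))$ (since $M$ cannot reach up to $\alpha$ cofinally given $\alpha \in E$ and the cofinality count), contradicting the possibility of $\alpha$ being a genuine nonaccumulation point of large cofinality. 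Once the cofinality bookkeeping is arranged correctly, the two indiscernibility lemmas close the argument cleanly.
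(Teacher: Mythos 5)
Your first case ($\delta\in S\cap M$) is essentially correct, but the assertion that $C_\delta\cap\alpha\in M$ is circular as written: you must first use $\alpha\in\nacc(C_\delta)$ and $\alpha=\sup(M\cap\alpha)$ to pick $\beta\in M\cap\alpha$ with $\beta>\sup(C_\delta\cap\alpha)$, after which $\alpha=\min(C_\delta\setminus\beta)\in M$ immediately; the order-type detour is then unnecessary.

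The second case has genuine gaps. First, you apply \cref{Reflection at beta delta} to $d=C_{\beta_M(\delta)}$, but this object need not exist: $\bar C$ is defined only on $S$, and there is no reason to have $\beta_M(\delta)\in S$. Second, and more seriously, your proposed finish rests on a false claim, namely that $\alpha<\beta_M(\alpha)$ forces $\cf(\beta_M(\alpha))\leq\cf(\beta_M(\delta))$. Nothing of the sort holds: $\beta_M(\alpha)$ can be, for instance, a regular cardinal $\tau\in M$ with $\alpha=\sup(M\cap\tau)$, so its cofinality is in no way bounded by $\cf(\beta_M(\delta))$. What \cref{Reflection at beta delta} actually gives is the opposite-direction inequality $\cf(\alpha)\leq\cf(\beta_M(\alpha))$ (every $M$-club of $\beta_M(\alpha)$ accumulates at $\alpha$), and this contradicts $\cf(\alpha)>\cf(\beta_M(\delta))$ only when $\beta_M(\alpha)=\beta_M(\delta)$; but in the case you consider, where $\delta=\sup(M\cap\delta)$, the set $M\cap(\alpha,\delta)$ is nonempty, so $\beta_M(\alpha)<\delta<\beta_M(\delta)$ and this route is structurally blocked. (You also silently assume $\delta=\sup(M\cap\delta)$, which is not part of the hypothesis ``$\delta\in S\setminus M$''; it is how the lemma is applied later in the paper, and it is what the reflection step requires, but it should be flagged.) The missing idea is a counting argument that genuinely uses $\bar C\in M$: fix $d\in M$ club in $\beta_M(\delta)$ with $\ot(d)=\cf(\beta_M(\delta))$; by \cref{Reflection at beta delta}, $\delta\in\acc(d)$, and since $d$ is closed, in fact $\delta\in d$. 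Having chosen $\beta\in M\cap\alpha$ above $\sup(C_\delta\cap\alpha)$, the set
\begin{equation*}
Y_\beta=\{\min(C_\xi\setminus\beta) : \xi\in d\cap S,\ \xi>\beta\}
\end{equation*}
belongs to $M$, has cardinality at most $\cf(\beta_M(\delta))<\cf(\alpha)$, and contains $\alpha=\min(C_\delta\setminus\beta)$ because $\delta\in d\cap S$. Hence $\sup(Y_\beta\cap\alpha)<\alpha$, so using $\alpha=\sup(M\cap\alpha)$ once more we may pick $\beta'\in M\cap\alpha$ above this supremum, and then $\alpha=\min(Y_\beta\setminus\beta')\in M$. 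It is exactly this step---projecting the ladder system along an $M$-club of $\beta_M(\delta)$ to produce a small $M$-set of candidates, then beating it with the cofinality hypothesis---that your outline lacks; \cref{Reflection at beta delta} and \cref{club indiscenibility} applied to clubs alone can never pin $\alpha$ down, since they never see $C_\delta$ itself.
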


Roughly speaking the above ladder swallowing trick allows us to sneak large portions of $\bar C$ inside J\'onsson models, provided we know that the ordinals $\beta_M(\delta)$ are singular often enough. At this point, we would like to note that club guessing sequences exist. This is essentially the content of Claim 2.6, Remark 2.6A, and Claim 2.7 from Chapter III of \cite{Sh}, which we quote a consequence of.

\begin{lemma}[Shelah]\label{club guessing}
	Suppose that $\lambda$ is either a successor of singular \changed{cardinal $\mu$} or \changed{$\lambda$ is} an inaccessible cardinal, and let $\kappa$ be an uncountable regular cardinal below $\lambda$. Suppose that $S$ is a stationary set of singular cardinals satisfying either
	
	\begin{enumerate}
		\item $S\subseteq\{\delta<\lambda : \cf(\delta)=\kappa\}$ if $\lambda=\mu^+$, or
		\item $S\subseteq \{\delta <\lambda : \omega<\cf(\delta)\leq\kappa\}$ if $\lambda$ is inaccessible.
	\end{enumerate}
	Then there is some $S'\subseteq S$ stationary and an $S'$-club system $\bar C=\langle C_\delta : \delta\in S'\rangle$ such that $\bar C$ is a club guessing sequence.
\end{lemma}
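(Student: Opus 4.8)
\section*{Proof proposal}

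The plan is to run Shelah's standard club-guessing iteration, refined so that the non-accumulation points of the guessed clubs carry the required cofinalities. First I would record that in both cases $\kappa^+<\lambda$: if $\lambda=\mu^+$ with $\mu$ singular, then $\kappa$ is a regular cardinal below $\mu^+$ distinct from the singular $\mu$, so $\kappa<\mu$ and hence $\kappa^+\le\mu<\lambda$; if $\lambda$ is inaccessible this is immediate. This is the only place the two hypotheses are used, so the remainder is uniform. Next I would fix a seed system $\bar C^0=\langle C^0_\delta : \delta\in S\rangle$ with each $C^0_\delta$ club in $\delta$ of order type $\cf(\delta)\le\kappa$, chosen so that $\nacc(C^0_\delta)$ consists of regular cardinals cofinal in $\delta$. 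This is possible precisely because each $\delta\in S$ is a \emph{singular} cardinal, hence a limit cardinal, so regular cardinals are cofinal below it; consequently $\nacc(C^0_\delta)$ already meets every tail of $\delta$ in ordinals of unboundedly large cofinality, i.e.\ the cofinality requirement of the conclusion is built into the seed.

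The core is a descent-and-pressing-down argument. Suppose toward a contradiction that no stationary $S'\subseteq S$ carries a club guessing sequence; in particular every system obtained from the seed by intersecting with a club fails to guess. I would then build a $\subseteq$-decreasing, continuous sequence $\langle E_\epsilon : \epsilon<\kappa^+\rangle$ of clubs of $\lambda$, which is legitimate since $\kappa^+<\lambda$ and $\lambda$ is regular (so intersections of fewer than $\lambda$ clubs remain club). Put $E_0=\lambda$; at limits take intersections; at a successor, apply the assumed failure of guessing to the system $\langle C^0_\delta\cap E_\epsilon : \delta\in S\cap\acc(E_\epsilon)\rangle$ to obtain a club $E_{\epsilon+1}\subseteq E_\epsilon$ for which the set $B_\epsilon=\{\delta\in S\cap\acc(E_\epsilon) : C^0_\delta\cap E_{\epsilon+1}\notin I_\delta\}$ is \emph{non}-stationary. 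To keep the non-accumulation structure under control I would not track the raw intersection $C^0_\delta\cap E_\epsilon$ but its projection onto the limit points of $E_\epsilon$, namely $\{\sup(E_\epsilon\cap(\alpha+1)) : \alpha\in C^0_\delta\}$, so that the current guessing club always sits inside $E_\epsilon$ and its non-accumulation points are genuine limit points of $E_\epsilon$.

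I would then extract the contradiction. Set $E_*=\bigcap_{\epsilon<\kappa^+}E_\epsilon$, a club since $\kappa^+<\lambda$. For each $\delta\in S\cap\acc(E_*)$ the sets $C^0_\delta\cap E_\epsilon$, $\epsilon<\kappa^+$, form a continuous $\subseteq$-decreasing sequence of subsets of the set $C^0_\delta$ of size $\cf(\delta)\le\kappa$; since each strict drop deletes at least one element, there are fewer than $\kappa^+$ strict drops, so the sequence stabilizes at some $f(\delta)<\kappa^+$, giving $C^0_\delta\cap E_{f(\delta)}=C^0_\delta\cap E_{f(\delta)+1}$. Provided this stabilized set is $I_\delta$-positive, stabilization forces $\delta\in B_{f(\delta)}$. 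Now $f$ is regressive on the stationary set $S\cap\acc(E_*)$ (since $\delta$ is a singular cardinal of cofinality $\le\kappa$, we have $\delta>\kappa^+>f(\delta)$ for all but nonstationarily many $\delta$), so by Fodor there is a fixed $\xi^*<\kappa^+$ and a stationary $T$ with $f\equiv\xi^*$ on $T$. Then $T\subseteq B_{\xi^*}$, contradicting the non-stationarity of $B_{\xi^*}$.

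The step I expect to be the main obstacle is the one invoked parenthetically above: verifying that the stabilized set $C^0_\delta\cap E_{f(\delta)}$ is genuinely $I_\delta$-positive, i.e.\ that its non-accumulation points still realize cofinalities $\ge\gamma$ cofinally in $\delta$ for every regular $\gamma<\delta$. This is exactly what separates the present statement from plain club guessing, where $I_\delta$ records only boundedness and accumulation. The difficulty is that intersecting $C^0_\delta$ with a club $E$ can change which points are non-accumulation points, and a high-cofinality point of $\nacc(C^0_\delta)$ survives into $C^0_\delta\cap E$ only if it actually lies in $E$. This is where the projection onto limit points of $E$ earns its keep: its non-accumulation points are limit points of $E$, and I would argue that, for club-many $\delta$, these projected points realize unboundedly large cofinalities cofinally in $\delta$, again leaning on $\delta$ being a singular limit cardinal so that $E$ has limit points of arbitrarily large cofinality cofinally below $\delta$. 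Reconciling this cofinality preservation with the descent measure of the previous paragraph is the delicate point, and is where Shelah's original argument concentrates most of its effort.
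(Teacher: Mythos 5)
You should first be aware that the paper itself does not prove this lemma: it is quoted as a consequence of Claims 2.6, 2.6A and 2.7 of Chapter III of \cite{Sh}. So the comparison must be against what a complete proof requires, and by that standard your proposal has a genuine gap --- one you in fact locate yourself but do not close. The skeleton you set up (iterate the failure of guessing through clubs $E_\epsilon$, $\epsilon<\kappa^+$, stabilize $C^0_\delta\cap E_\epsilon$ because $|C^0_\delta|\le\kappa$, then press down on $f$) is the standard proof of \emph{plain} club guessing, where the ideal only records boundedness and accumulation. Here the final contradiction needs that for stationarily many $\delta$ the stabilized set $C^0_\delta\cap E_{f(\delta)}=C^0_\delta\cap E_{f(\delta)+1}$ is $I_\delta$-positive, i.e.\ its non-accumulation points realize arbitrarily large cofinalities unboundedly in $\delta$. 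If instead all the stabilized sets land in their ideals, then no $\delta$ ever enters any $B_\epsilon$, Fodor yields nothing, and there is no contradiction; and nothing in the construction rules this out, since the failure hypothesis only tells you which sets are \emph{not} positive. Concretely: even with $\nacc(C^0_\delta)$ a set of regular cardinals $\mu_i$ cofinal in $\delta$, take $\nu_i\in(\sup_{j<i}\mu_j,\mu_i)$ of cofinality $\omega$ and let $E\cap\delta$ be the closure of $\{\nu_i : i<\cf(\delta)\}$; this is club in $\delta$, yet $C^0_\delta\cap E=\{\sup_{j<i}\mu_j : i\text{ limit}\}$ and your projected set $\{\sup(E\cap(\alpha+1)):\alpha\in C^0_\delta\}=\{\nu_i\}\cup\{\sup_{j<i}\mu_j : i\text{ limit}\}$ both consist entirely of points of cofinality $\le\kappa$, hence lie in the relevant ideal (witnessed by $\gamma=\kappa^+<\delta$). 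So the projection does not earn its keep: it secures membership in $E$, but the cofinalities of the projected points are completely uncontrolled, and controlling them is the entire content of the lemma as opposed to plain club guessing.

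What the known proofs (Shelah's III.2.6--2.7) do at this point is qualitatively different: rather than passively intersecting or projecting a fixed seed, one re-selects the ladder through $\delta$ at each stage so that its non-accumulation points are elements of $\acc(E_\epsilon)$ of prescribed, cofinally increasing cofinalities. This is possible for club-many $\delta$ precisely because each $\delta$ is a singular limit cardinal: on the club of $\delta$ with $\ot(E_\epsilon\cap\delta)=\delta$, the set $\acc(E_\epsilon)\cap\cof(\ge\gamma)$ is unbounded in $\delta$ for every regular $\gamma<\delta$. The price is that the re-selected ladders are no longer $\subseteq$-decreasing in $\epsilon$ (the chosen points move upward as the clubs shrink), so your counting-of-drops measure disappears, and a different well-founded stabilization argument must be supplied. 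That argument is the real work of the theorem; your proposal reduces the lemma to it and stops there, so it cannot be counted as a proof.
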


\section{Stationary Sets and Their Rank}

In order to further investigate the relationship between J\'onsson cardinals and stationary reflection, we want to define a measure of how often a stationary set reflects. We first consider an iterated version of the trace operator.

\begin{definition}
	Suppose that $S\subseteq\lambda$ for some ordinal $\lambda$ of uncountable cofinality. We define the \changed{\em trace operator} on $S$ by:
	
	\begin{equation*}
	\Tr(S)=\{\alpha<\lambda : S\cap\alpha\text{ is stationary in }\alpha\}.
	\end{equation*}
\end{definition}

The following version of the iterated trace operator is originally defined in \cite{Jech84}, and is essentially the same as the operator $A^{[\bar e,i]}$ defined in \cite{Sh506}.

\begin{definition}
	\changed{We then define the {\em iterated trace operator} on $S\subseteq\lambda$ for $\alpha<\lambda$ recursively as follows:}\\
	
	\begin{enumerate}
		\item \changed{$\Tr_0(S)=S$};
		\item \changed{$\Tr_{\alpha+1}(S)=\Tr(\Tr_\alpha(S))$};
		\item \changed{For $\alpha$ limit, we set:}
			\begin{equation*}
			\changed{\Tr_\alpha=\bigcap_{\beta<\alpha}\Tr_\beta(S)}\\
			\end{equation*}
	\end{enumerate}

	\changed{In the case that $\lambda$ is an inaccessible cardinal, we additionally fix a $\lim(\lambda^+)\setminus\lambda$-club system $\bar e=\langle e_\alpha : \alpha\in\lim(\lambda^+), \lambda\leq\alpha<\lambda^+\rangle$, and define the trace for ordinals $\lambda\leq \alpha<\lambda^+$ as follows:}\\

	\begin{enumerate}
		\item \changed{$\Tr_{\alpha+1}(S)=\Tr(\Tr_\alpha(S))$ as before};
		\item \changed{If $\alpha$ is a limit ordinal with  $\cf(\alpha)<\lambda$, then} 
			\begin{equation*}
			\changed{\Tr_\alpha=\bigcap_{\beta\in e_\alpha}\Tr_\beta(S)}
			\end{equation*}
		\item \changed{If $\alpha$ is a limit ordinal with  $\cf(\alpha)=\lambda$, then}
			\begin{equation*}
			\changed{\Tr_\alpha=\bigtriangleup_{\beta\in e_\alpha} \Tr_\beta(S)}
			\end{equation*}
	\end{enumerate}
\end{definition}

\begin{definition}
\changed{Suppose $\lambda$ is an inaccessible cardinal, and let $S$ denote the set of inaccessibles below $\lambda$. For an ordinal $0<\alpha<\lambda^+$, we say that $\lambda$ is {\em$\alpha$-Mahlo} if $\Tr_\beta(S)$ is stationary for every $\beta<\alpha$. We define the {\em Mahlo degree of $\lambda$}, denoted by  $\mathrm{Mahlo}(\lambda)$, to be the least ordinal $\alpha$ such that $\Tr_\alpha(S)$ is non-stationary. For $\lambda$ not inaccessible, we will simply say that $\Mahlo(\lambda)=-1$ }
\end{definition}

\changed{We should note that our definition of the iterated trace operator on a set $S$ will change depending on what ordinal $S$ is a subset of. This could potentially cause confusion, but in practice it will be clear where $S$ lives. At this point, we would like to state several useful properties of the operator $\Tr_\alpha(S)$}.

\begin{lemma}\label{Trace props}
\changed{Suppose $\lambda$ is an ordinal of uncountable cofinality, and $S\subseteq\lambda$}.

\begin{enumerate}
\item \changed{If $\delta\in\Tr_\alpha(S)$ for $0<\alpha<\lambda$, then $\cf(\delta)\geq\aleph_\alpha$.}
\item \changed{If $0<\alpha<\lambda$, then $\Tr_\alpha(\lambda)=S^\lambda_{\geq \aleph_\alpha}$.}
\item \changed{If $\lambda$ is an inaccessible cardinal, then $Tr_\lambda(\lambda)=\{\delta<\lambda : \delta \text{ is inaccessible}\}$.}
\item \changed{Modulo clubs, the definition of $\Tr_\alpha(S)$ does not depend on our choice of $\bar e$.}
\item \changed{The sets $\Tr_\alpha(S)$ form a $\subseteq_{NS}$-decreasing sequence for $0<\alpha$.}
\item \changed{Suppose $\nu\leq\lambda$ are ordinals of uncountable cofinality, and $\gamma<\min\{\cf(\nu),\cf(\lambda)\}$. Then}

\begin{equation*}
\changed{\Tr_{\gamma}(S)\cap \nu=_{\mathrm{NS}_\nu}\Tr_{\gamma}(S\cap\nu).}
\end{equation*}

\item \changed{Suppose $\nu\leq\lambda$ are inaccessible cardinals, and $\gamma=\lambda\times n +\beta$ for $n<\omega$ and $\beta<\nu$. Then}

\begin{equation*}
\changed{\Tr_{\lambda\times n + \beta}(S)\cap \nu=_{\mathrm{NS}_\nu}\Tr_{\nu\times n + \beta}(S\cap\nu).}
\end{equation*}

\item \changed{If $\lambda$ is an inaccessible cardinal, then $\Tr_{\lambda+\beta}(S)=_{NS}\Tr_\beta(\Tr_\lambda(S))$ for $\beta<\lambda\times\omega$.}
\end{enumerate}
\end{lemma}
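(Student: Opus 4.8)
The plan is to prove \Cref{Trace props} by verifying its eight clauses essentially in the order stated, since several later clauses bootstrap off earlier ones. The first three clauses are the ``base cases'' that pin down how the iterated trace relates to cofinality, and the remaining clauses are structural (monotonicity, $\bar e$-independence, and the two ``commutation'' identities that let us localize the trace to a smaller inaccessible).

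\textbf{Clauses (1)--(3): cofinality bookkeeping.} For clause (1) I would argue by induction on $\alpha$. If $\delta\in\Tr(T)$ for any stationary-in-$\delta$ set $T$, then $\delta$ is a limit of points of $T$, and a single application already forces $\cf(\delta)>\omega$ (a stationary subset of $\delta$ witnesses uncountable cofinality). The inductive step pushes this up: if $\delta\in\Tr_{\alpha+1}(S)=\Tr(\Tr_\alpha(S))$, then $\Tr_\alpha(S)\cap\delta$ is stationary in $\delta$, so $\delta$ is an accumulation point of a set all of whose members have cofinality $\geq\aleph_\alpha$; a standard reflection/closing-off argument then gives $\cf(\delta)>\aleph_\alpha$, hence $\geq\aleph_{\alpha+1}$. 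At limit stages $\alpha<\lambda$ one intersects (or, at the $\cf=\lambda$ level, diagonally intersects along $e_\alpha$), and the supremum of the $\aleph_\beta$ is absorbed. Clause (2) is then the sharp computation of the same induction applied to $S=\lambda$: one shows $\Tr_\alpha(\lambda)=S^\lambda_{\geq\aleph_\alpha}$ by checking both inclusions modulo clubs, using that $S^\lambda_{\geq\aleph_\alpha}$ is exactly the set of $\delta$ for which $S^\delta_{\geq\aleph_\beta}$ is stationary for all $\beta<\alpha$. Clause (3) identifies $\Tr_\lambda(\lambda)$ with the inaccessibles below $\lambda$: a regular $\delta$ lands in $\bigtriangleup_{\beta\in e_\lambda}\Tr_\beta(\lambda)$ precisely when $\delta$ is regular and $S^\delta_{\geq\aleph_\beta}$ is stationary for cofinally many $\beta<\delta$, which by clause (2) is equivalent to $\delta$ being a regular limit cardinal, i.e.\ (weakly) inaccessible.

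\textbf{Clauses (4)--(5): robustness and monotonicity.} For clause (4), the point is that any two $\lim(\lambda^+)$-club systems $\bar e,\bar e'$ agree on a club in each $e_\alpha$, and since the trace operation respects club-equivalence (if $T=_{NS}T'$ then $\Tr(T)=_{NS}\Tr(T')$, which I would record as a preliminary observation), an induction on $\alpha<\lambda^+$ shows $\Tr^{\bar e}_\alpha(S)=_{NS}\Tr^{\bar e'}_\alpha(S)$; the only delicate stage is $\cf(\alpha)=\lambda$, where I would use that the diagonal intersection along two club-equivalent ladders agree modulo the nonstationary ideal. Clause (5) ($\subseteq_{NS}$-decreasing) follows because $\Tr(T)\subseteq\acc$-points of $T$ and $\Tr(T)\subseteq_{NS}T$ whenever $T$ is stationary (any $\delta$ with $T\cap\delta$ stationary is in particular a limit of $T$), and this propagates through successors and limits by induction.

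\textbf{Clauses (6)--(8): localization.} These are the heart of the matter, and clause (7) is the one I expect to be the main obstacle. Clause (6) is the ``small index'' localization: for $\gamma<\min\{\cf(\nu),\cf(\lambda)\}$ the trace up to $\gamma$ only involves iterating below the common cofinality, so intersecting with $\nu$ commutes with the trace operator up to a club; I would prove this by induction on $\gamma$, the key being that for $\delta<\nu$ the stationarity of $\Tr_\beta(S)\cap\delta$ is unaffected by whether we work inside $\lambda$ or inside $\nu$. Clause (7) extends this to indices of the form $\lambda\times n+\beta$, where the crucial phenomenon is that \emph{inside} $\nu$ the ``full revolution'' that costs $\lambda$ steps only costs $\nu$ steps: by clause (3) applied at $\nu$, one full $\Tr_\lambda$ (resp.\ $\Tr_\nu$) reaches the inaccessibles, and below $\nu$ these coincide modulo clubs. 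So I would induct on $n$, using clause (6) to handle the residual $\beta<\nu$ term and using the identification of $\Tr_\lambda(S)\cap\nu$ with $\Tr_\nu(S\cap\nu)$ (the $n=1,\beta=0$ case, essentially clause (3) relativized) as the engine that converts each $\lambda$-block into a $\nu$-block. The bookkeeping with the ladder system $\bar e$ at the $\cf=\lambda$ and $\cf=\nu$ stages is where I expect the argument to require the most care, since one must check that the diagonal intersections used to define $\Tr_{\lambda\times n}$ and $\Tr_{\nu\times n}$ line up modulo $\mathrm{NS}_\nu$. Finally, clause (8) is the special case $\nu=\lambda$ repackaged: it simply asserts $\Tr_{\lambda+\beta}(S)=_{NS}\Tr_\beta(\Tr_\lambda(S))$ for $\beta<\lambda\times\omega$, which unwinds the definition at successors and at limits below $\lambda\times\omega$ using that such limits have cofinality $<\lambda$, so the $e_\alpha$-intersection definition applies and the identity passes through the induction without invoking the diagonal-intersection clause.
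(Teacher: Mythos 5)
Your overall architecture—clause-by-clause induction, with (1)--(3) as cofinality computations and (6)--(8) handled by commuting the trace with restriction below $\nu$—matches the paper's proof, but two of your justifications contain genuine errors. First, for clause (5) you assert that $\Tr(T)\subseteq_{NS}T$ whenever $T$ is stationary, on the grounds that any $\delta$ with $T\cap\delta$ stationary is a limit of $T$. Being a limit point of $T$ does not put $\delta$ in $T$, even modulo clubs: for $T=S^{\omega_2}_\omega$ one has $\Tr(T)=S^{\omega_2}_{\omega_1}$, a stationary set disjoint from $T$, so the claimed inclusion fails badly. The correct engine is transitivity of reflection: $\Tr(\Tr(T))\subseteq\Tr(T)$ holds genuinely (given $\Tr(T)\cap\delta$ stationary and $C\subseteq\delta$ club, pick $\gamma\in\Tr(T)\cap\acc(C)$, then a point of $T\cap C\cap\gamma$). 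This is exactly why the lemma asserts the decreasing property only for $0<\alpha$: it compares traces with traces, never $\Tr(S)$ with $S$.

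Second, for clause (4) you argue that any two club systems $\bar e,\bar e'$ agree on a club in each $e_\alpha$. This fails at every $\alpha\in\lim(\lambda^+)\setminus\lambda$ of countable cofinality: the two ladders $\{\lambda+2n : n<\omega\}$ and $\{\lambda+2n+1 : n<\omega\}$ at $\lambda+\omega$ are disjoint. Moreover, even where the ladders do share a club, passing from the intersection over one cofinal set to the intersection over another requires precisely the monotonicity of clause (5), together with $\lambda$-completeness (at cofinalities $<\lambda$) and normality (at cofinality $\lambda$) of $\mathrm{NS}_\lambda$; this is what the paper invokes, and it means (4) and (5) are really established by a simultaneous induction rather than (5) following from a pointwise inclusion that is false. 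Your treatment of (7) is closer to the mark—the paper likewise inducts with a fixed canonical choice of $e_\gamma$—but note that the engine converting a $\lambda$-block into a $\nu$-block is not ``clause (3) relativized'' (clause (3) concerns only the full set $\lambda$, via clause (2), and does not relativize to arbitrary $S$); it is the observation that a diagonal intersection over $\lambda$-many sets, when restricted below $\nu$, depends only on its first $\nu$-many entries, combined with a diagonal intersection of the witnessing clubs supplied by the inductive hypothesis (normality of $\mathrm{NS}_\nu$). That is the step your sketch flags as delicate but does not supply, and it is the one the paper writes out in full.
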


\begin{proof}
\changed{As the proofs of several of these facts are routine, we will only prove a few while providing brief outlines for the others.}\\

\changed{(1): This is done by induction on $0<\alpha<\lambda$. Of course, if $\delta\in \Tr_1(S)=\Tr(S)$, then $S\cap\delta$ is stationary and so $\cf(\delta)>\omega$. In the case that $\delta\in\Tr_{\alpha+1}(S)=\Tr(\Tr_\alpha(S))$, then $S^{\delta}_{\geq \aleph_\alpha}$ is stationary in $\delta$ by induction and so $\cf(\delta)\geq{\aleph_{\alpha+1}}$. Finally, if $\alpha$ is a limit ordinal, and $\delta\in \Tr_\alpha(S)$ tells us that $\delta\in\Tr_\beta(S)$ for every $\beta<\alpha$ and so $\cf(\delta)\geq \aleph_\beta$ for each $\beta<\alpha$ by induction. Thus, $\cf(\delta)\geq\aleph_\alpha$.}\\

\changed{(2): Note that (1) already tells us that  $\Tr_\alpha(\lambda)\subseteq S^\lambda_{\geq \aleph_\alpha}$. The other inclusion is proved by induction on $\alpha<\lambda$.}\\

\changed{(3): Suppose that $\lambda$ is an inaccessible cardinal. By the previous fact, we have that} 

\begin{equation*}
\changed{\Tr_\lambda(\lambda)=\bigtriangleup_{\alpha<\lambda}\Tr_\alpha(\lambda)=\bigtriangleup_{\alpha<\lambda}S^\lambda_{\geq\aleph_\alpha}.}
\end{equation*}
\\
\changed{But then, $\delta\in \bigtriangleup_{\alpha<\lambda}S^\lambda_{\geq\aleph_\alpha}$ if and only if $\delta\in \bigcap_{\alpha<\delta} S^\lambda_{\geq\aleph_\alpha}$. That is, $\Tr_\lambda(\lambda)$ is precisely the set of $\delta<\lambda$ satisfying $\cf(\delta)=\aleph_\delta$, i.e. the set of inaccessibles below $\lambda$.}\\

\changed{(4): For a regular, uncountable cardinal $\lambda$, we can prove this by induction on $\alpha<\lambda^+$ using the fact that the non-stationary ideal is normal. When $\lambda$ is singular, we simply need to use (2) to verify that the trace operator terminates to a non-stationary set before completeness becomes an issue. }\\

\changed{(5): By induction again, noting that we get genuine inclusion prior to stage $\lambda$, and that we only get $\subseteq_{NS}$ at limit stages above $\lambda$ because $\Tr_\alpha(S)$ depends on the set $e_\alpha$ modulo clubs.}\\

\changed{(6): We omit the proof of this fact and instead note that the proof is similar to the one given below.}\\

\changed{(7): Let $\nu\leq\lambda$ be inaccessible cardinals. We proceed by induction on $\gamma<\lambda\times\omega$. In light of (4), we note that we may assume, for $\gamma=\lambda\times n + \beta$, that}

\begin{equation*}
e_\gamma=\begin{cases}
                                   \{\lambda\times n + \alpha : \alpha<\beta\} & \text{if $\beta>0$} \\
                                   \{\lambda\times (n-1) + \alpha : \alpha<\lambda\} & \text{if $\beta=0$}.
  \end{cases}
\end{equation*} 
\\
\changed{Since we only want equivalence modulo clubs, this will not present an issue. In fact, this is the source of the equivalence modulo clubs instead of genuine equivalence in the statement of this fact. Now, for the base case of our induction, simply note that:}
\begin{equation*}
\changed{\Tr_1(S)\cap\nu=\Tr(S)\cap\nu=\{\alpha<\nu : S\cap\alpha\text{ is stationary}\}=\Tr(S\cap\nu)}.
\end{equation*}
\\
\changed{At successor stages, the computation is similar, so we omit it. At limit stages, suppose that $\gamma=\lambda\times n +\beta$ for $n<\omega$ and a limit ordinal $\beta<\nu$, and further that, for $\lambda\times m + \epsilon<\gamma$ with $\epsilon<\nu$:}

\begin{equation*}
\Tr_{\lambda\times m + \epsilon}(S)\cap\nu=_{\mathrm{NS}_\nu}\Tr_{\nu\times m + \epsilon}(S\cap\nu). 
\end{equation*}
\\
\changed{We have two cases depending on whether or not $\beta=0$, as that dictates the sort of intersection we use. First assume that $\beta\neq 0$, so then}

\begin{align*}
\Tr_{\lambda\times n + \beta}(S)\cap\nu&=\bigcap_{\alpha<\beta}\Tr_{\lambda\times n + \alpha}(S)\cap\nu\\
&=_{\mathrm{NS}_\nu}\bigcap_{\alpha<\beta}\Tr_{\nu\times n + \alpha}(S\cap\nu)\\
&=\Tr_{\lambda\times n + \beta}(S\cap\nu).
\end{align*}
\\
\changed{Here, the first and last equality follow from the definition of $e_{\gamma}$ as well as the definition of the iterated trace operator, while the middle equality follows from the inductive assumption. To be more precise, we take the clubs $C_\alpha$ witnessing that $\Tr_{\lambda\times n + \alpha}(S)\cap\nu=_{\mathrm{NS}_\nu}\Tr_{\nu\times n + \alpha}(S\cap\nu)$, and their interesection $C$ witnesses that middle equality. Finally, assume that $\beta=0$ (so in particular $n>0$). Then}

\begin{align*}
\Tr_{\lambda\times n}(S)\cap\nu&=\bigtriangleup_{\alpha<\lambda}\Tr_{\lambda\times (n-1) + \alpha}(S)\cap\nu\\
&=\{\alpha<\lambda : \alpha\in\bigcap_{\beta<\alpha}\Tr_{\lambda\times(n-1)+\beta}(S)\}\cap\nu\\
&=\{\alpha<\nu : \alpha\in\bigcap_{\beta<\alpha}\Tr_{\lambda\times(n-1)+\beta}(S)\cap\nu\}\\\
&=_{\mathrm{NS}_\nu}\{\alpha<\nu : \alpha\in\bigcap_{\beta<\alpha}\Tr_{\nu\times(n-1)+\beta}(S\cap\nu)\}\\
&=\bigtriangleup_{\alpha<\nu}\Tr_{\lambda\times (n-1) + \alpha}(S\cap\nu)\\
&=\Tr_{\nu\times n}(S\cap\nu).
\end{align*}
\\
\changed{While many of these inequalities follow directly from various definitions, the only suspect one is the fourth one above. For all $\alpha< \nu$, we know that} 

\begin{equation*}
\changed{\bigcap_{\beta<\alpha}\Tr_{\lambda\times(n-1)+\beta}(S)\cap\nu=_{\mathrm{NS}_\nu}\bigcap_{\beta<\alpha}\Tr_{\nu\times(n-1)+\beta}}
\end{equation*}

\changed{from the inductive assumption, and by intersecting witnessing clubs again. For each such $\alpha$, we let $C_\alpha$ witness the above equality, and the diagonal intersection of the sets $C_\alpha$ will witness the fourth equality above.}
\\

\changed{(8): By induction again, using an argument similar to the one given above.}
\end{proof}

\changed{Given (2) and (3) of the lemma above, for $\lambda$ not inaccessible and $S\subseteq \lambda$ we can define $\Tr_\alpha(S)=\emptyset$ for ordinals $\alpha\geq\lambda$.} 

\begin{definition}
For $S\subseteq \lambda$, we will define \changed{\em the $\lambda$ rank of $S$}, $\rk_\lambda(S)$, to be the least $\alpha<\lambda^+$ for which $\Tr_\alpha(S)$ is non-stationary. \changed{If $\lambda$ is of countable cofinality or a successor, then we set $\rk_\lambda(S)=0$.}
\end{definition}

\changed{We will frequently leverage the fact that the rank of a set $S$ reflects downwards in the following sense.}

\begin{lemma}\label{rank below}
Suppose that $\lambda$ is a limit ordinal of uncountable cofinality, then $0<\rk_\lambda(S)\leq \lambda\times n + \beta$ for $n<\omega$ and $\beta<\lambda$ if and only if

\begin{equation*}
\{\delta<\lambda : \rk_\delta(S\cap\delta)\geq\delta\times n +\beta\}
\end{equation*}
\\
is non-stationary in $\lambda$.
\end{lemma}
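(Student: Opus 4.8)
The plan is to prove both directions of the biconditional by leveraging part (7) of \cref{Trace props}, which relates the trace operator computed in $\lambda$ to the trace operator computed in some inaccessible $\nu<\lambda$. The central observation is that the rank $\rk_\lambda(S)$ is, by definition, the least $\alpha$ at which $\Tr_\alpha(S)$ stops being stationary, so controlling the rank amounts to controlling where the stationarity of the iterated traces fails. I would organize the argument around the set
\begin{equation*}
R=\{\delta<\lambda : \rk_\delta(S\cap\delta)\geq\delta\times n +\beta\},
\end{equation*}
and show that $R$ is stationary precisely when $\rk_\lambda(S)>\lambda\times n+\beta$.

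For the main computation, I would first observe that $\Tr_{\lambda\times n+\beta}(S)$ being stationary in $\lambda$ is, by the definition of rank, equivalent to $\rk_\lambda(S)>\lambda\times n+\beta$. The key step is then to relate membership $\delta\in\Tr_{\lambda\times n+\beta}(S)$ to the local rank condition at $\delta$. The natural tool is part (7): for an inaccessible $\nu\leq\lambda$ and $\gamma=\lambda\times n+\beta$ with $\beta<\nu$, we have $\Tr_{\lambda\times n+\beta}(S)\cap\nu=_{\mathrm{NS}_\nu}\Tr_{\nu\times n+\beta}(S\cap\nu)$. Restricting attention to inaccessible $\delta<\lambda$ (which by part (3) are exactly the points showing up in the relevant traces at high enough stages), membership $\delta\in\Tr_{\lambda\times n+\beta}(S)$ should, modulo a club, be equivalent to $\Tr_{\delta\times n+\beta}(S\cap\delta)$ being stationary in $\delta$, i.e. to $\rk_\delta(S\cap\delta)>\delta\times n+\beta$ — or, handling the boundary between strict and non-strict inequality carefully, to $\delta\in R$. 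I would prove this by induction on the ordinal $\lambda\times n+\beta$, paralleling the inductive structure of part (7) and passing clubs through intersections and diagonal intersections as in that proof.

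Having established that $\Tr_{\lambda\times n+\beta}(S)$ agrees modulo the nonstationary ideal with $R$ (up to the usual care about inaccessible points and the strict-versus-nonstrict discrepancy in the inequality), both directions follow immediately: $\Tr_{\lambda\times n+\beta}(S)$ is nonstationary if and only if $R$ is nonstationary, which is exactly the desired equivalence $\rk_\lambda(S)\leq\lambda\times n+\beta\iff R\text{ nonstationary}$. I would also dispatch the $0<\rk_\lambda(S)$ hypothesis separately, noting it merely rules out the degenerate case where $S$ itself is nonstationary, so that the rank is genuinely a limit-type ordinal whose value is governed by the iterated traces.

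The hard part will be matching the inequalities exactly. The rank is defined via the \emph{least} stage of failure, so $\rk_\delta(S\cap\delta)\geq\delta\times n+\beta$ corresponds to $\Tr_{\delta\times n+\beta'}(S\cap\delta)$ being stationary for all $\beta'<\beta$ rather than to stationarity at stage $\delta\times n+\beta$ itself; reconciling this with the single-stage trace $\Tr_{\lambda\times n+\beta}(S)$ requires attention at limit $\beta$, where the trace is an intersection or diagonal intersection, and these are exactly the stages where part (7) only gives equality modulo clubs. Threading the club corrections through the induction so that the final nonstationarity statement is clean — and confirming that the discrepancy between $\geq$ in the displayed set and the strict failure inherent in the rank does not shift the answer by one — is where I expect the bookkeeping to be most delicate.
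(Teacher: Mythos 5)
Your proposal is correct and takes essentially the same route as the paper: the paper's proof likewise reduces the lemma to showing $\Tr_{\lambda\times n+\beta}(S)=_{\mathrm{NS}}\{\delta<\lambda : \rk_\delta(S\cap\delta)\geq\delta\times n+\beta\}$, using the localization facts of \cref{Trace props} to get that for club-many $\delta$ the trace $\Tr_{\lambda\times n+\beta}(S)\cap\delta$ is stationary iff $\Tr_{\delta\times n+\beta}(S\cap\delta)$ is, and then handling exactly the strict-versus-nonstrict and modulo-club issues you flag by a case split on $\beta$ (successor, non-zero limit, and $\beta=0$ with $n>0$). The only cosmetic difference is that the paper invokes parts (2) and (6) of \cref{Trace props} alongside part (7), so that stages below $\lambda$ and non-inaccessible $\delta$ are covered without needing your restriction to inaccessible points.
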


\begin{proof}
\changed{Suppose that $\lambda$ is a limit ordinal of uncountable cofinality, $S\subseteq\lambda$ is stationary, $n<\omega$, and $\beta<\lambda$. We begin by noting that it follows from (2), (6), and (7) of \cref{Trace props} that, for club almost every $\delta<\lambda$, $\Tr_{\lambda\times n + \beta} (S)\cap \delta$ is non stationary if and only if $\Tr_{\delta\times n + \beta} (S\cap \delta)$ is non-stationary. We now have several cases depending on what $\beta$ is.}
\\

\changed{\underline{Case 1 ($\beta=\gamma+1$):} From the definition, we know that $\rk_\lambda(S)\leq \lambda\times n +\gamma +1$ if and only if} 

\begin{align*}
\Tr(\Tr_{\lambda\times n +\gamma}(S))&=\{\delta<\lambda : \Tr_{\lambda\times n +\gamma}(S)\cap\delta\text{ is stationary}\}\\
&=_{\mathrm{NS}}\{\delta<\lambda :  \Tr_{\delta\times n +\gamma}(S\cap\delta)\text{ is stationary}\}\\
&=\{\delta<\lambda : \rk_\delta(S\cap\delta)>\delta\times n +\gamma\}\\
&=\{\delta<\lambda : \rk_\delta(S\cap\delta)\geq\delta\times n +\gamma+1\}
\end{align*}
\\
\changed{is non-stationary in $\lambda$.  Note that the second equality comes from the observation made at the beginning of the proof.}\\

\changed{\underline{Case 2 ($\beta$ is a non-zero limit ordinal):} Again, it follows from the definition that $\rk_\lambda(S)\leq \lambda\times n +\beta$ if and only if}

\begin{align*}
\Tr_{\lambda\times n + \beta}(S)&=_{\mathrm{NS}}\bigcap_{\alpha<\beta}\Tr_{\lambda\times n + \alpha}(S)\\
&=_{\mathrm{NS}}\bigcap_{\alpha<\beta}\Tr_{\lambda\times n + \alpha+1}(S)\\
&=\{\delta <\lambda : (\forall \alpha<\beta)(\Tr_{\lambda\times n + \alpha}(S)\cap\delta\text{ is stationary})\}\\
&=_{\mathrm{NS}}\{\delta <\lambda : (\forall \alpha<\beta)(\Tr_{\delta\times n + \alpha}(S\cap \delta)\text{ is stationary})\}\\
&=\{\delta <\lambda : (\forall \alpha<\beta)(\rk_\delta(S\cap\delta)>\delta\times n + \alpha\}\\
&=\{\delta <\lambda : \rk_\delta(S\cap\delta)\geq\delta\times n + \beta\}
\end{align*} 
\\
\changed{is non-stationary in $\lambda$.}\\

\changed{\underline{Case 3 ($\beta=0$ and $n>0$):} Again, it follows from the definition that $\rk_\lambda(S)\leq \lambda\times n$ if and only if}

\begin{align*}
\Tr_{\lambda\times n}(S)&=_{\mathrm{NS}}\bigtriangleup_{\alpha<\lambda}\Tr_{\lambda\times (n-1) + \alpha}(S)\\
&=_{\mathrm{NS}}\bigtriangleup_{\alpha<\lambda}\Tr_{\lambda\times(n-1) + \alpha+1}(S)\\
&=\{\delta <\lambda : (\forall \alpha<\delta)(\Tr_{\lambda\times (n-1) + \alpha}(S)\cap\delta\text{ is stationary})\}\\
&=_{\mathrm{NS}}\{\delta <\lambda : (\forall \alpha<\delta)(\Tr_{\delta\times(n-1) + \alpha}(S\cap \delta)\text{ is stationary})\}\\
&=\{\delta <\lambda : (\forall \alpha<\delta)(\rk_\delta(S\cap\delta)>\delta\times (n-1) + \alpha)\}\\
&=\{\delta <\lambda : \rk_\delta(S\cap\delta)\geq\delta\times n\}
\end{align*} 
\\
\changed{is non-stationary in $\lambda$.}\\
\end{proof}

\begin{corollary}
\changed{If $\lambda$ is an inaccessible cardinal, then $\mathrm{Mahlo(\lambda)}\leq\lambda\times n + \beta$ for some $n<\omega$ and $\beta<\lambda$ if and only if }
	
	\begin{equation*}
	\changed{\{\delta<\lambda : \mathrm{Mahlo}(\delta)\geq\delta\times n +\beta\}}
	\end{equation*}
	\\
	\changed{is non-stationary in $\lambda$.}
\end{corollary}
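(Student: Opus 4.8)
The plan is to deduce this from \cref{rank below} applied to the set $S=\{\gamma<\lambda:\gamma\text{ is inaccessible}\}$, after translating between $\Mahlo$ and $\rk_\lambda$. First I would record that, unwinding the definitions, both $\Mahlo(\lambda)$ and $\rk_\lambda(S)$ are the least $\alpha$ for which $\Tr_\alpha(S)$ fails to be stationary, so $\Mahlo(\lambda)=\rk_\lambda(S)$. For $\delta<\lambda$ we have $S\cap\delta=\{\gamma<\delta:\gamma\text{ is inaccessible}\}$, so the same unwinding gives $\Mahlo(\delta)=\rk_\delta(S\cap\delta)$ whenever $\delta$ is itself inaccessible, while $\Mahlo(\delta)=-1$ by convention when $\delta$ is not inaccessible.

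The subtlety is this mismatch at non-inaccessible $\delta$, and the key point I would isolate is the standard fact that if the inaccessibles below $\delta$ are stationary in $\delta$, then $\delta$ must itself be inaccessible. To see this, note first that stationarity makes $S\cap\delta$ unbounded, so $\delta$ is a limit cardinal; and if $\delta$ were singular, then fixing a continuous cofinal sequence $\langle\delta_i:i<\cf(\delta)\rangle$ in $\delta$, the $\delta_i$ with $i$ a limit and $\delta_i>\cf(\delta)$ form a club, each of whose elements has $\cf(\delta_i)\le\cf(\delta)<\delta_i$ and is therefore singular; this club is disjoint from the regular cardinals comprising $S\cap\delta$, contradicting stationarity. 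Hence $\delta$ is a regular limit cardinal, i.e. inaccessible. In particular $\rk_\delta(S\cap\delta)>0$ forces $\delta$ to be inaccessible.

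With these in hand, fix $n<\omega$ and $\beta<\lambda$. I would first dispose of the degenerate bound $n=\beta=0$: there both sides of the asserted equivalence simply say that $S$ is non-stationary, since $\{\delta<\lambda:\Mahlo(\delta)\ge 0\}$ is exactly $S$. So assume $\lambda\times n+\beta>0$, whence $\delta\times n+\beta>0$ for every ordinal $\delta\ge 1$. If $\lambda$ is not Mahlo, then $S$ is non-stationary, so $\Mahlo(\lambda)=0\le\lambda\times n+\beta$ and the left-hand side holds; moreover $\{\delta<\lambda:\Mahlo(\delta)\ge\delta\times n+\beta\}\subseteq S$ is non-stationary, so the right-hand side holds as well. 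If instead $\lambda$ is Mahlo, then $\rk_\lambda(S)=\Mahlo(\lambda)>0$, so the hypothesis $0<\rk_\lambda(S)$ of \cref{rank below} is automatic and its left-hand side reduces to $\Mahlo(\lambda)\le\lambda\times n+\beta$. Using the fact established above, for a positive bound no non-inaccessible $\delta$ lies in either of the sets $\{\delta:\Mahlo(\delta)\ge\delta\times n+\beta\}$ or $\{\delta:\rk_\delta(S\cap\delta)\ge\delta\times n+\beta\}$, while at inaccessible $\delta$ the two defining conditions coincide; hence these two sets are literally equal, and the desired equivalence is exactly the conclusion of \cref{rank below}. The only real work is bookkeeping around the non-inaccessible points and the degenerate bound, which is precisely why the non-Mahlo case and the case $n=\beta=0$ are separated out.
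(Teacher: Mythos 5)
Your proposal is correct and follows exactly the route the paper intends: the corollary is stated without proof as an immediate specialization of \cref{rank below} to the set $S$ of inaccessibles below $\lambda$, using the identification $\mathrm{Mahlo}(\delta)=\rk_\delta(S\cap\delta)$. Your additional bookkeeping — showing that stationarily many inaccessibles below $\delta$ force $\delta$ to be inaccessible, so the two index sets agree at non-inaccessible points, and separating the degenerate bound $n=\beta=0$ — is precisely the detail the paper leaves implicit, and it is handled correctly.
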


Using ranks, we can introduce an increasing chain of ideals extending the non-stationary ideal which were first considered in \cite{Sh}.

\begin{definition}
	For $\gamma<\lambda^+$, define the ideals
	
	\begin{align*}
	I^{\rk}_\gamma&=\{S\subseteq\lambda : \Tr_\gamma(S)\in\mathrm{NS}_\lambda\},\\
	I^{\rk}_{<\gamma}&=\{S\subseteq\lambda :(\exists\beta<\gamma) (\Tr_\beta(S)\in\mathrm{NS}_\lambda)\}.
	\end{align*}
\end{definition}

An easy observation provides with an alternative characterization of Mahlo-degree. For the sake of notation, we will say that $\lambda$ is $0$-Mahlo if it is inaccessible.

\begin{lemma}\label{Rank Idelas}
	$\lambda$ is $\gamma$-Mahlo for some \changed{$\gamma<\lambda\times \omega$} if and only if $I^{\rk}_{<\lambda+\gamma}$ is proper. 
\end{lemma}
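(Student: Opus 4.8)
The plan is to route the argument through the rank of $\lambda$ itself, $\rk_\lambda(\lambda)$, and to show that for $\gamma<\lambda\times\omega$ the three conditions ``$I^{\rk}_{<\lambda+\gamma}$ is proper'', ``$\rk_\lambda(\lambda)\geq\lambda+\gamma$'', and ``$\lambda$ is $\gamma$-Mahlo'' all coincide. Since each of these ideals contains $\NS_\lambda$, properness simply means $\lambda\notin I^{\rk}_{<\lambda+\gamma}$, and unwinding the definition this says there is no $\beta<\lambda+\gamma$ with $\Tr_\beta(\lambda)$ non-stationary; by the definition of $\rk_\lambda$ as the least such $\beta$, this is exactly $\rk_\lambda(\lambda)\geq\lambda+\gamma$. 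So the first equivalence is purely definitional, and the real work is to compute $\rk_\lambda(\lambda)$ in terms of the Mahlo degree.

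For that I would split the ordinals below $\lambda+\gamma$ at $\lambda$. For $0<\beta<\lambda$, part (2) of \cref{Trace props} gives $\Tr_\beta(\lambda)=S^\lambda_{\geq\aleph_\beta}$, which is stationary because $\lambda$ is inaccessible, hence a fixed point of $\aleph$, so $\aleph_\beta<\lambda$ and $S^\lambda_{\geq\aleph_\beta}$ is a stationary set of ordinals of prescribed cofinality; together with $\Tr_0(\lambda)=\lambda$ this yields $\rk_\lambda(\lambda)\geq\lambda$ automatically. For $\lambda\leq\beta<\lambda+\gamma$, write $\beta=\lambda+\beta'$ with $\beta'<\gamma<\lambda\times\omega$, and apply part (8) of \cref{Trace props} to obtain $\Tr_{\lambda+\beta'}(\lambda)=_{\NS}\Tr_{\beta'}(\Tr_\lambda(\lambda))$; part (3) identifies $\Tr_\lambda(\lambda)$ with the set $S$ of inaccessibles below $\lambda$, so $\Tr_{\lambda+\beta'}(\lambda)=_{\NS}\Tr_{\beta'}(S)$. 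As stationarity is invariant under $=_{\NS}$, the ordinal $\lambda+\beta'$ witnesses non-stationarity precisely when $\Tr_{\beta'}(S)$ does. Combining the two ranges, $\rk_\lambda(\lambda)\geq\lambda+\gamma$ holds if and only if $\Tr_{\beta'}(S)$ is stationary for every $\beta'<\gamma$, which is exactly the definition of $\lambda$ being $\gamma$-Mahlo (the case $\gamma=0$ being covered vacuously, matching the convention that $0$-Mahlo means inaccessible).

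The one point demanding care — and the source of the hypothesis $\gamma<\lambda\times\omega$ — is that part (8) of \cref{Trace props}, which supplies the clean identity $\Tr_{\lambda+\beta'}(\lambda)=_{\NS}\Tr_{\beta'}(\Tr_\lambda(\lambda))$, is only asserted in that range; above $\lambda\times\omega$ the interaction between the tail of the trace and the choice of ladder system $\bar e$ is no longer controlled, so the argument does not transfer verbatim. Beyond this, the only thing to watch is the bookkeeping between ``least ordinal at which the trace becomes non-stationary'' and the inequality $\Mahlo(\lambda)\geq\gamma$; since both $\rk_\lambda(\lambda)$ and $\Mahlo(\lambda)$ are defined as such least ordinals, and the offset is uniformly $\lambda$, this matching is routine. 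I anticipate no genuine obstacle here: all of the mathematical content is already packaged into parts (2), (3), and (8) of \cref{Trace props}, and the lemma is in essence the observation that $\rk_\lambda(\lambda)=\lambda+\Mahlo(\lambda)$, read off one ordinal at a time.
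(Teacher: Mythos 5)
Your proposal is correct and takes essentially the same approach as the paper: both reduce the statement to the identification $\Tr_{\lambda+\beta}(\lambda)=_{\NS}\Tr_\beta(\Tr_\lambda(\lambda))=_{\NS}\Tr_\beta(S)$ for $\beta<\gamma$, via parts (3) and (8) of \cref{Trace props}, and then read off the equivalence between properness of $I^{\rk}_{<\lambda+\gamma}$ and stationarity of the traces of the set $S$ of inaccessibles. Your explicit check that $\Tr_\beta(\lambda)=S^\lambda_{\geq\aleph_\beta}$ is stationary for $0<\beta<\lambda$ (using part (2) and inaccessibility of $\lambda$) is a detail the paper leaves implicit, but it is the same argument, merely repackaged through the observation $\rk_\lambda(\lambda)=\lambda+\Mahlo(\lambda)$.
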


\begin{proof}
	Let $\gamma<\lambda\times\omega$ be given, and let $S$ denote the set of inaccessibles below $\lambda$. Recall that $\lambda$ is $\gamma$-Mahlo precisely when $\Tr_\beta(S)$ is stationary for every $\beta<\gamma$. \changed{By (3) of \cref{Trace props}}, we know that  $\Tr_\lambda(\lambda)=S$, and so 

\begin{equation*}
\changed{\Tr_{\lambda+\beta}(\lambda)=_{NS}\Tr_\beta(\Tr_\lambda(\lambda))=\Tr_\beta(S)}
\end{equation*}
\\
 for every ordinal $\beta<\gamma$ \changed{by (8) of the same lemma.} 
	
	 \changed{Suppose now that $\lambda$ is $\gamma$-Mahlo. Then $\Tr_\beta(S)=_{NS}\Tr_{\lambda+\beta}(\lambda)$ is non-stationary and so it follows that $I^{\rk}_{<\lambda+\gamma}$ is proper.} On the other hand, if $I^{\rk}_{\lambda+\gamma}$ is proper, then $\Tr_{\lambda+\beta}(\lambda)\changed{=_{NS}}\Tr_\gamma(S)$ is non-stationary and so $\lambda$ is $\gamma$-Mahlo.
\end{proof}

At this point, we would like to consider the question of how complete these rank ideals must be. Using ideas of Shelah from \cite{Sh413}, we can show that some of these ideals are closed under increasing unions of length $\theta$ by way of bounding the Galvin-Hajnal rank of functions from $\theta$ to $\lambda$. With this in mind, we take a slight detour to prove some facts about the Galvin-Hajnal rank modulo the bounded ideal.

\begin{definition}
	\changed{Given an uncountable set $A$, a countably complete ideal $I$ on $A$, and an ordinal-valued function $f:A\to\mathrm{ON}$, we define {\em the Galvin-Hajnal rank of $f$ modulo $I$} to be the ordinal}
	
	\begin{equation*}
	\changed{\|f\|_I=\sup\{\|g\|+1 : g<_I f\}.}
	\end{equation*}
\end{definition}

\changed{As we will only be using the ideal of bounded sets when we talk about the Galvin-Hajnal rank, we will omit the subscript denoting the relevant ideal.}

\begin{definition}
	Let $\lambda$ be an ordinal, and let $\omega<\theta<\lambda$ be regular. Define
	
	\begin{equation*}
	\GH(\theta, \lambda)=\min\{\alpha : (\forall f\in{}^\theta\lambda)(||f||<\alpha)\}.
	\end{equation*}	
\end{definition}

\begin{lemma}
	If $\lambda$ is inaccessible, and $\GH(\theta,\lambda)\leq\lambda$, then for every $\delta<\lambda$ we have that $\GH(\theta,\delta)<\lambda$.
\end{lemma}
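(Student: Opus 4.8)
The plan is to reduce the entire statement to a single observation: the Galvin--Hajnal rank of a constant function computes the $\GH$ number of its value. Precisely, for an ordinal $\delta$ let $c_\delta\in{}^\theta\mathrm{ON}$ denote the constant function with value $\delta$. I claim that $\|c_\delta\|=\GH(\theta,\delta)$. Granting this, the lemma is immediate: if $\delta<\lambda$ then $c_\delta\in{}^\theta\lambda$, so by the definition of $\GH(\theta,\lambda)$ together with the hypothesis $\GH(\theta,\lambda)\le\lambda$ we obtain $\GH(\theta,\delta)=\|c_\delta\|<\GH(\theta,\lambda)\le\lambda$, which is exactly the desired conclusion.

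To establish the identity $\|c_\delta\|=\GH(\theta,\delta)$, I would unwind both sides against the bounded ideal $I$ on $\theta$. By definition $\|c_\delta\|=\sup\{\|g\|+1 : g<_I c_\delta\}$, and $g<_I c_\delta$ means precisely that $\{i<\theta : g(i)\ge\delta\}$ is bounded in $\theta$. The first point to record is that $\|g\|$ depends only on the $=_I$-class of $g$, since $<_I$, and hence its rank function, is invariant under altering a function on a bounded set. Thus, given any $g<_I c_\delta$, I may reset $g$ to $0$ on the bounded set where $g(i)\ge\delta$ to obtain $g'\in{}^\theta\delta$ with $g'=_I g$ and $\|g'\|=\|g\|$. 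Conversely, every $f\in{}^\theta\delta$ satisfies $f<_I c_\delta$, since $\{i : f(i)\ge\delta\}=\emptyset$. Hence the sets of ranks $\{\|g\| : g<_I c_\delta\}$ and $\{\|f\| : f\in{}^\theta\delta\}$ coincide, and taking the supremum of the corresponding successor ordinals yields $\|c_\delta\|=\sup\{\|f\|+1 : f\in{}^\theta\delta\}=\GH(\theta,\delta)$.

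The only step requiring care is this bounded-modification argument, i.e.\ the verification that the $<_I$-rank is constant on $=_I$-classes and that the predecessors of $c_\delta$ are, modulo $I$, exactly the functions into $\delta$; everything else is bookkeeping, so I do not expect a genuine obstacle. It is worth noting that the argument uses nothing about $\lambda$ beyond $\delta<\lambda$ and $\GH(\theta,\lambda)\le\lambda$, so inaccessibility is not strictly essential here --- it simply fixes the ambient setting in which $\GH(\theta,\lambda)\le\lambda$ is the natural hypothesis. The more naive route, namely observing that every $f\in{}^\theta\lambda$ has range bounded below the regular cardinal $\lambda$ so that $\GH(\theta,\lambda)=\sup_{\delta<\lambda}\GH(\theta,\delta)$, only yields monotonicity and the non-strict bound $\GH(\theta,\delta)\le\lambda$; it is precisely to upgrade this to a strict inequality that I prefer the constant-function computation.
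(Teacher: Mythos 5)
Your proposal is correct and takes essentially the same route as the paper: both arguments rest on the observation that the constant function $c_\delta$ everywhere-dominates every $f\in{}^\theta\delta$, giving $\GH(\theta,\delta)\leq\|c_\delta\|<\GH(\theta,\lambda)\leq\lambda$ (the paper phrases this as a contradiction, and indeed remarks just after these lemmas that $\GH(\theta,\lambda)$ is precisely the rank of the constant function). The reverse inequality $\|c_\delta\|\leq\GH(\theta,\delta)$, which your bounded-modification argument establishes, is correct but superfluous, since only the inequality $\GH(\theta,\delta)\leq\|c_\delta\|$ is needed for the lemma.
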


\begin{proof}
	Suppose that the conclusion fails for some $\delta<\lambda$, and for each $\alpha<\lambda$, fix a function $f_\alpha\in{}^\theta\delta$ such that $||f_\alpha||>\alpha$. Let $c_\delta$ be the function which maps every ordinal $i<\theta$ to $\delta$, and note that $c_\delta\in{}^\theta\lambda$. As $c_\delta$ everywhere-dominates each $f_\alpha$, it follows that $||c_\delta||>\alpha$ for every $\alpha<\lambda$. Thus, $||c_\delta||\geq\lambda$, which is a contradiction.
\end{proof}

\begin{lemma}
	Suppose that $\lambda$ is inaccessible and $\GH(\theta,\lambda)=\lambda$, then the set
	
	\begin{equation*}
	E=\{\delta<\lambda : \GH(\theta,\delta)=\delta\}
	\end{equation*}
	\\
	is $>\theta$ closed and unbounded in $\lambda$.
\end{lemma}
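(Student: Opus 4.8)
The plan is to study the function $h\colon\lambda\to\lambda$ defined by $h(\delta)=\GH(\theta,\delta)$. By the previous lemma this is well defined, since $\GH(\theta,\lambda)=\lambda$ forces $h(\delta)<\lambda$ for all $\delta<\lambda$; and it is clearly non-decreasing, because $\delta\le\delta'$ gives ${}^\theta\delta\subseteq{}^\theta\delta'$ and hence $\GH(\theta,\delta)\le\GH(\theta,\delta')$. The set $E$ is precisely the fixed-point set $\{\delta<\lambda:h(\delta)=\delta\}$, so I want to establish two features of $h$: a lower bound $h(\delta)\ge\delta$, and continuity at ordinals of cofinality $>\theta$.

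First I would record that $h(\delta)\ge\delta$ for every $\delta$. For $\gamma<\delta$ consider the constant function $c_\gamma\in{}^\theta\delta$ with value $\gamma$; a short induction on $\gamma$ shows $\|c_\gamma\|\ge\gamma$, using that $c_\beta<_I c_\gamma$ whenever $\beta<\gamma$, so $\|c_\gamma\|\ge\|c_\beta\|+1\ge\beta+1$, and taking the supremum over $\beta<\gamma$. Since $\GH(\theta,\delta)=\sup\{\|f\|+1:f\in{}^\theta\delta\}\ge\|c_\gamma\|+1\ge\gamma+1$ for all $\gamma<\delta$, we obtain $h(\delta)\ge\delta$.

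The key computation is continuity at cofinality $>\theta$: if $\delta=\sup_{\xi<\mu}\delta_\xi$ with the $\delta_\xi$ increasing and $\cf(\delta)>\theta$, then $h(\delta)=\sup_{\xi<\mu}h(\delta_\xi)$. The inequality $\ge$ is monotonicity. For $\le$, take any $f\in{}^\theta\delta$; since $|\ran(f)|\le\theta<\cf(\delta)$ we have $\sup(\ran(f))<\delta$, so $f\in{}^\theta\delta_\xi$ for some $\xi<\mu$, whence $\|f\|<\GH(\theta,\delta_\xi)\le\sup_\xi h(\delta_\xi)$. Taking the supremum over all such $f$ yields $h(\delta)\le\sup_\xi h(\delta_\xi)$. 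This immediately gives the $>\theta$-closedness of $E$: if $\langle\delta_\xi\rangle$ is an increasing sequence in $E$ whose supremum $\delta$ has cofinality $>\theta$, then $h(\delta)=\sup_\xi h(\delta_\xi)=\sup_\xi\delta_\xi=\delta$, so $\delta\in E$.

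For unboundedness I would pass to the set $C=\{\delta<\lambda:(\forall\gamma<\delta)\,h(\gamma)<\delta\}$ of ordinals closed under $h$, which is a genuine club in $\lambda$ precisely because $h$ maps $\lambda$ into $\lambda$. At any $\delta\in C$ with $\cf(\delta)>\theta$, continuity gives $h(\delta)=\sup_{\gamma<\delta}h(\gamma)\le\delta$, and combined with $h(\delta)\ge\delta$ this forces $\delta\in E$. Finally, since $\lambda$ is inaccessible, for each regular $\kappa$ with $\theta<\kappa<\lambda$ the set $S^\lambda_\kappa$ is stationary, so $C\cap S^\lambda_\kappa$ is nonempty and cofinal, producing points of $E$ of cofinality $\kappa$ arbitrarily high below $\lambda$; hence $E$ is unbounded. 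The main obstacle to watch is exactly this last step: naively iterating $h$ for $\omega$ steps lands on ordinals of cofinality $\omega\le\theta$, where continuity can fail, so one must instead harvest closure points of high cofinality, which is where the inaccessibility of $\lambda$ is essential.
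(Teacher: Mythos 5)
Your proposal is correct and rests on exactly the same core fact as the paper's proof: when $\cf(\delta)>\theta$, every $f\in{}^\theta\delta$ has bounded range, so its rank is controlled by some earlier stage. The paper proves $>\theta$-closedness this way and dismisses unboundedness as ``a similar idea''; your write-up is essentially that argument made systematic, and it usefully supplies the two pieces the paper leaves implicit, namely the lower bound $\GH(\theta,\delta)\geq\delta$ via constant functions and the explicit unboundedness argument through closure points of $h$ intersected with $S^\lambda_\kappa$ for regular $\kappa\in(\theta,\lambda)$.
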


\begin{proof}
	We show that $E$ is $\changed{>}\theta$-closed, and note that a similar idea can be used to show that $E$ is unbounded. Fix $\kappa<\lambda$ regular with $\theta<\kappa$, and let $\langle \delta_i : i<\kappa\rangle$ be a sequence of ordinals from $E$. Set $\delta=\sup_{i<\kappa}\delta_i$, and let $f\in{}^{\theta}\delta$. Note that since $\cf(\delta)>\theta$, the range of $f$ is bounded in $\delta$ and so $f\in{}^\theta\delta_i$ for some $i<\kappa$. But then, $\|f\|<\delta_i$ and so $\delta\in E$.
\end{proof}

It is worth noting that what we're calling $\GH(\theta,\lambda)$ is simply the rank of the constant function $f: \theta\to \{\lambda\}$. With the above in hand, we are now in a position to prove the following result, which can be viewed as what Shelah really proved in Claim 0.20 of \cite{Sh413}.

\begin{theorem}\label{Rank Bound}
	Assume that $\lambda$ is inaccessible, $\theta<\lambda$ is regular such that $\GH(\theta,\lambda)= \lambda$, and $\gamma=\lambda\times n^*+\beta^*<\lambda\times\omega$ (so $\beta^*<\lambda$, and $n<\omega$). If $\langle S_i : i<\theta\rangle$ is a $\subseteq$-increasing sequence of subsets of $S^{\lambda}_{>\theta}$ such that $\rk_\lambda(S_i)<\gamma$ for each $i<\theta$, then
	
	\begin{equation*}
	\rk_\lambda(\bigcup_{i<\theta}S_i)<\lambda\times n^* + \GH(\theta,\beta^*)<\lambda\times (n^*+1).
	\end{equation*}
\end{theorem}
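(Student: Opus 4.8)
The second inequality is the easy half: since $\beta^*<\lambda$ and $\GH(\theta,\lambda)=\lambda$, the first of the two lemmas on $\GH$ above gives $\GH(\theta,\beta^*)<\lambda$, whence $\lambda\times n^*+\GH(\theta,\beta^*)<\lambda\times n^*+\lambda=\lambda\times(n^*+1)$. So the whole content lies in the first inequality, and the plan is to route it through the Galvin--Hajnal rank exactly as the introduction advertises. Setting $f\colon\theta\to\lambda^+$ by $f(i)=\rk_\lambda(S_i)$, I will prove the two estimates
\begin{equation*}
\rk_\lambda\Big(\bigcup_{i<\theta}S_i\Big)\le\|f\|\qquad\text{and}\qquad\|f\|<\lambda\times n^*+\GH(\theta,\beta^*),
\end{equation*}
where $\|f\|$ is the Galvin--Hajnal rank modulo the bounded ideal on $\theta$. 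Note that because the $S_i$ are $\subseteq$-increasing, $\bigcup_i S_i$ and its rank depend only on a tail of the sequence, so working modulo the bounded ideal is harmless.

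The first tool is a commutation lemma: for any $\subseteq$-increasing $\langle T_i:i<\theta\rangle$ with each $T_i\subseteq S^\lambda_{>\theta}$, one has $\Tr(\bigcup_i T_i)=\bigcup_i\Tr(T_i)$, and moreover each $\Tr(T_i)\subseteq S^\lambda_{>\theta}$, so the family $\langle\Tr(T_i):i<\theta\rangle$ is again of the same form. This is a cofinality split: if $\cf(\delta)>\theta$ then $\bigcup_i(T_i\cap\delta)$ is stationary in $\delta$ iff some single $T_i\cap\delta$ is, since $\NS_\delta$ is $\theta^+$-complete; and if $\cf(\delta)\le\theta$ then every $T_i\cap\delta$, being contained in $S^\lambda_{>\theta}$, is nonstationary in $\delta$, as is the union, so both sides omit $\delta$. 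Iterating, the same split shows commutation propagates through all successor levels and through all limit levels of cofinality $>\theta$; the genuine obstruction is confined to limit levels $\alpha$ with $\cf(\alpha)\le\theta$, where an increasing union of $\theta$ traces meets an intersection of length $\alpha$.

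The heart of the argument, and the step I expect to be hardest, is the bound $\rk_\lambda(\bigcup_i T_i)\le\|f\|$, which I would prove by induction on $\|f\|$. At a point $\delta$ that reflects the union to a limit level $\alpha$ with $\cf(\alpha)\le\theta$ but reflects no single $T_i$ that far, the non-decreasing function $\beta\mapsto\min\{i:\delta\in\Tr_\beta(T_i)\}$ fails to stabilize below $\theta$; reading off its values produces a function $g_\delta<_{\mathrm{bd}}f$ certifying that such $\delta$ can only survive to a strictly smaller Galvin--Hajnal level. The well-foundedness of $<_{\mathrm{bd}}$ together with the recursion $\|f\|=\sup\{\|g\|+1:g<_{\mathrm{bd}}f\}$ then bounds the total accumulation by $\|f\|$. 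Making this bookkeeping precise — matching the per-$\delta$ diagonal functions against the inductive hypothesis at the strictly lower levels they witness, and handling the interaction with the non-commuting intersections at cofinality $\le\theta$ — is the delicate part, and is essentially the content of Shelah's Claim~0.20.

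Finally, the arithmetic estimate $\|f\|<\lambda\times n^*+\GH(\theta,\beta^*)$ follows from two observations. Since $f(i)=\rk_\lambda(S_i)<\lambda\times n^*+\beta^*$ for every $i$, we have $f<_{\mathrm{bd}}c_{\lambda\times n^*+\beta^*}$, and hence $\|f\|<\|c_{\lambda\times n^*+\beta^*}\|=\GH(\theta,\lambda\times n^*+\beta^*)$. It then remains to compute $\GH(\theta,\lambda\times n^*+\beta^*)=\lambda\times n^*+\GH(\theta,\beta^*)$, which I would establish by induction on $n^*$ from the additivity principle that $\GH(\theta,\mu+\nu)=\mu+\GH(\theta,\nu)$ whenever $\GH(\theta,\mu)=\mu$, applied with $\mu=\lambda$ (legitimate precisely because $\GH(\theta,\lambda)=\lambda$ is our standing hypothesis). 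Combining the two estimates yields $\rk_\lambda(\bigcup_i S_i)\le\|f\|<\lambda\times n^*+\GH(\theta,\beta^*)$, as required.
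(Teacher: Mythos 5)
Your reduction has the right architecture, and its peripheral parts are sound: the second inequality is handled exactly as in the paper (via the lemma that $\GH(\theta,\lambda)\leq\lambda$ forces $\GH(\theta,\beta^*)<\lambda$), your commutation lemma is correct (including the key observation that $\Tr(T_i)\subseteq S^\lambda_{>\theta}$, so the obstruction is confined to limit levels of cofinality $\leq\theta$), and your claim (b) is true, though the additivity principle $\GH(\theta,\mu+\nu)=\mu+\GH(\theta,\nu)$ when $\GH(\theta,\mu)=\mu$ is itself a lemma needing a partition-and-induction argument that you do not supply. The genuine gap is claim (a), $\rk_\lambda(\bigcup_{i<\theta}S_i)\leq\|f\|$: this is where the entire content of the theorem lives, and you explicitly defer it ("essentially the content of Shelah's Claim 0.20") rather than prove it. Two concrete problems show the sketch is not completable as stated. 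First, the proposed engine, induction on $\|f\|$, has nothing to recurse on: applying $\Tr$ once replaces a set of rank $\rho$ by a set of rank $-1+\rho$, and $-1+\rho=\rho$ for every infinite $\rho$, so the rank function attached to the sequence of traces is not $<_{\mathrm{bd}}f$; likewise the function $\beta\mapsto\min\{i:\delta\in\Tr_\beta(T_i)\}$ that you propose to "read off" is indexed by levels $\beta<\alpha$, not by $i<\theta$, and there is no evident way to convert it into a function dominated by $f$ modulo the bounded ideal. Second, your argument for (a) never invokes the standing hypothesis $\GH(\theta,\lambda)=\lambda$ (it surfaces only in your step (b)), whereas in the paper this hypothesis is used precisely and crucially in this step.

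What the paper actually does, and what is missing from your plan, is a localization. By \cref{rank below} one writes, for club-many $\delta<\lambda$, $\rk_\delta(S_i\cap\delta)=\delta\times n_{\delta,i}+\epsilon_{\delta,i}$, and attaches to each $\delta\leq\lambda$ the residue function $f^\delta(i)=\epsilon_{\delta,i}$ (defined on a co-bounded set of $i$, where $n_{\delta,i}$ has stabilized at its eventual value $n(\delta)$). The two pillars are then: a monotonicity lemma, that for $\delta\in\Tr(S)$ there is a club $e\subseteq\delta$ such that $\alpha\in e$ and $n(\alpha)=n(\delta)$ imply $f^\alpha<_{\mathrm{bd}}f^\delta$, hence $\|f^\alpha\|<\|f^\delta\|$; and an induction on $\delta\leq\lambda$ proving $\rk_\delta(S\cap E\cap\delta)\leq\delta\times n(\delta)+\|f^\delta\|$, where $E=\{\delta<\lambda:\GH(\theta,\delta)=\delta\}$. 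The hypothesis $\GH(\theta,\lambda)=\lambda$ enters exactly here: it makes $E$ a $>\theta$-club, and for $\alpha,\delta\in E$ the bounds $\|f^\alpha\|<\alpha$ and $\|f^\delta\|<\delta$ are what force $n(\alpha)=n(\delta)$ when comparing $\alpha\times n(\delta)+\|f^\delta\|$ with $\alpha\times n(\alpha)+\|f^\alpha\|$ at a minimal counterexample, after which the monotonicity lemma yields the contradiction. The global function $f(i)=\rk_\lambda(S_i)$ simply does not see this local structure; indeed, granting your additivity lemma, $\|f\|=\lambda\times n(\lambda)+\|f^\lambda\|$, so your claim (a) is literally the paper's final estimate in disguise. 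Completing your proposal would therefore require reproducing the paper's localized induction, not replacing it.
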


\begin{proof}
	We first note that we may assume there is a least ordinal $i^*$ such that $\rk_\lambda(S_j)>0$ for all $i^*\leq j< \theta$. Otherwise, the union is non-stationary and we have nothing to prove. So we may assume, by taking the union of the first $i^*$ elements and reindexing, that $\langle S_i : i<\theta\rangle$ is a $\subseteq$-increasing sequence of stationary subsets of $S^\lambda_{>\theta}$.
	
	Set $S=\bigcup_{i<\theta}S_i$. For each $i<\theta$, let $\rk_\lambda(S_i)=\lambda\times n_i +\epsilon_i>0$, where $n\leq n^*$ and $\epsilon_i <\lambda$. \changed{By \cref{rank below}, we know that for each $i<\theta$, there is a club} $C_i\subseteq\lambda$ \changed{such} that
	
	\begin{equation*}
	\rk_\delta(S_i\cap\delta):=\delta\times n_{\delta, i} +\epsilon_{\delta, i}<\delta\times n_i +\epsilon_i
	\end{equation*}
	\\
	for each $\delta\in C_i$. \changed{Let $C=\bigcap_{i<\theta}$, and note that, by replacing $S$ with $S\cap\acc(C)$, we may assume that the above inequality holds for all $i<\theta$ and $\delta<\lambda$.} For $\delta\leq\lambda$ and $n\leq n^*$, we define (letting $n_{\lambda, i}=n_i$)
	
	\begin{equation*}
	A^\delta_n=\{i<\theta : n=n_{\delta, i}\}.
	\end{equation*}
	\\
	
	For a fixed $\delta\leq\lambda$, the collection $\{A^\delta_n : n\leq n^*\}$ forms a partition of $\theta$, which means there is some index $n(\delta)$ such that $A^\delta_{n(\delta)}$ is unbounded in $\theta$. On the other hand, the sets $S_i\cap\delta$ are increasing and so, for $i\leq j<\theta$, we immediately know that $n_{i,\delta}\leq n_{j,\delta}$. This tells us that $A^\delta_{n(\delta)}$ is in fact co-bounded and that $A^\delta_n$ is bounded for each $n\neq n(\delta)$. Next, for each $\delta\leq\lambda$, we define the function $f^\delta:A^\delta_{n(\delta)}\to \delta$ by setting $f^\delta(i)=\epsilon_{\delta, i}$. In other words, $f^\delta$ maps each index $i$ in $A^\delta_{n(\delta)}$ to the $<\delta$-portion of the rank of $S_i\cap\delta$. Note also that, as $A^\delta_{n(\delta)}$ is co-bounded, $f^\delta$ is defined on all of $\theta$ modulo the bounded ideal.
	
	At this point, we would like to highlight some properties of the functions $f^\delta$ \changed{for $\delta\in \Tr(S)$}. With that in mind, we fix \changed{such a $\delta$} and for each $i<\theta$, we fix a club $e_i$ of $\delta$ such that the following holds for every $\alpha\in e_i$:
	
	\begin{equation*}
	\rk_{\alpha}(S_i\cap\alpha)<\alpha\times n_{\delta, i} +\epsilon_{\delta, i}.
	\end{equation*}
	\\
	As $\cf(\delta)>\theta$ (\changed{recall that $S\subseteq S^\lambda_{>\theta}$}), it follows that $e=\bigcap_{i<\theta}e_i$ is club in $\delta$. Of course, since $n_{i,\alpha}<n_{i,\delta}$ for each $\alpha\in e$ and $i<\theta$, we immediately see that $n(\alpha)\leq n(\delta)$ \changed{for each $\alpha\in e$}. Next, suppose that $n(\alpha)=n(\delta)$ and that $i\in A^\alpha_{n(\alpha)}\cap A^\delta_{n(\delta)}$. In this case, the inequality
	
	\begin{equation*}
	\alpha\times n(\alpha)+\epsilon_{\alpha, i}<\alpha\times n(\delta)+\epsilon_{\delta, i}
	\end{equation*}
	\\
	tells us that $\epsilon_{\alpha, i}<\epsilon_{\delta, i}$. Hence, $f^\delta(i)<f^\alpha(i)$. So, if $\alpha\in e$ and $n(\alpha)=n(\delta)$, then
	
	\begin{equation*}
	f^\alpha<_{bd}f^\delta.
	\end{equation*}
	\\
	Thus, under these circumstances, we have that $||f^\alpha||<||f^\delta||$. At this point, we have the necessary preliminary observations to prove the inequality in the statement of the theorem. Let 
	
	\begin{equation*}
	E=\{\delta<\lambda : \GH(\theta,\delta)=\delta\}.
	\end{equation*}
	\\
\changed{We claim that}

	\begin{equation*}
	\changed{\rk_\lambda(S)\leq \lambda\times n(\lambda)+||f^\lambda||.}
	\end{equation*}
	\\
To prove this, we will show that the following holds for each $\delta\leq\lambda$:
	
	\begin{equation*}
	\rk_\delta(S\cap E\cap\delta)\leq \delta\times n(\delta)+||f^\delta||.
	\end{equation*}
	\\
Note that this suffices since $S\subseteq S^\lambda_{<\theta}$, and so the fact that $E$ is $>\theta$-club tells us that $S\cap E=_{NS}S$. We proceed by induction on $\delta\leq\lambda$, and note that for $\delta<\lambda$, we may as well assume that $\delta\in \Tr(S)\cap\acc(E)$. Otherwise, $\rk_\delta(S\cap E\cap \delta)=0$ and the inequality holds trivially. 
	
	Suppose now (by way of contradiction) that the inequality above fails for some $\delta$, and let $\delta\leq\lambda$ be the least ordinal for which this happens. Necessarily, $\delta\in \acc(E)\cap\Tr(S)$. \changed{Using an argument from before}, we can find a club $e_0\subseteq\delta$ such that the following holds for each $i<\theta$ and $\alpha\in e_0$:
	
	\begin{equation*}
	\rk_\alpha(S_i\cap\alpha)<\alpha\times n_{\delta, i}+\epsilon_{\delta, i}.
	\end{equation*}
	\\
	Let $e=E\cap e_0$, which is $\changed{>}\theta$-club in $\delta$ since $\delta\in\acc(E)$. As $\rk_\delta(S\cap E\cap\delta)>\delta\times n(\delta)+||f^\delta||$, it follows from \cref{rank below} that the set
	
	\begin{equation*}
	B=\{\alpha\in e: \rk_\alpha(S\cap E\cap \alpha)\changed{\geq}\alpha\times n(\delta)+||f^\delta||\}
	\end{equation*}
	\\
	is stationary in $\delta$. By our inductive assumption, however, we also know that
	
	\begin{equation*}
	\rk_\alpha(S\cap E \cap\alpha)\leq\alpha\times n(\alpha)+||f^\alpha||
	\end{equation*}
	\\
	for each $\alpha\in B$. Thus, we have the following inequality for each $\alpha\in B$:
	
	\begin{equation*}
	\alpha\times n(\delta)+||f^\delta||\changed{\leq}\alpha\times n(\alpha)+||f^\alpha||.
	\end{equation*}
	\\
	
	Now, we note that since each $\alpha\in B$ is in $E$ and $\delta\in E$, we know that $||f^\delta||<\delta$ and $||f^\alpha||<\alpha$. So, the only way that the above inequality holds is if $n(\delta)\leq n(\alpha)$. However, we have already established that $\alpha\in e$ entails that $n(\alpha)\leq n(\delta)$ and thus it follows that $n(\delta)=n(\alpha)$. Therefore, in order for the above inequality to hold, it must be that 
	
	\begin{equation*}
	||f^\delta||\changed{\leq}||f^\alpha||.
	\end{equation*}
	\\
	This is, by our earlier remarks, absurd.
	
	As $n(\lambda)\leq n^*$ and $||f^\lambda||<\lambda$, we get
	
	\begin{equation*}
	\rk_\lambda(S)=\rk_\lambda(S\cap E)\leq \lambda\times n(\lambda)+||f^\lambda||.
	\end{equation*}
	
	We now have two cases to consider: Either $n(\lambda)<n^*$, or $n(\lambda)=n^*$. If $n(\lambda)<n^*$, then 
	
	\begin{align*}
		\rk_\lambda(S)&\leq \lambda\times n(\lambda)+||f^\lambda||\\
		&<\lambda\times n^*\\
		&<\lambda\times n^*+\GH(\theta,\beta^*).
	\end{align*}
	\\
	On the other hand if $n(\lambda)=n^*$, then for all $i\in A^\lambda_{n(\lambda)}$, we know that $\epsilon_i<\beta^*$ and thus $f^\lambda\in{}^\theta\beta^*$. This immediately gives us that
	
	\begin{align*}
	\rk_\lambda(S)&\leq \lambda\times n(\lambda)+||f^\lambda||\\
	&=\lambda\times n^*+||f^\lambda||\\
	&< \lambda\times n^*+\GH(\theta,\beta^*).
	\end{align*}
\end{proof}

Finally, we connect the above theorem to J\'onsson cardinals by way of a result due to Shelah from \cite{Sh}.

\begin{lemma}[Claim 2.12 of Chapter IV of \cite{Sh}]\label{Galvin Hajnal}
	If $\lambda$ is an inaccessible J\'onsson cardinal, then there are unboundedly-many regular $\theta<\lambda$ such that $\GH(\theta,\lambda)=\lambda$.
\end{lemma}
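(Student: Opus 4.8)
The plan is to first reduce the statement to a purely local question about Galvin--Hajnal ranks below $\lambda$, and only afterwards bring in J\'onssonness. For the reduction, fix a regular $\theta<\lambda$. Each constant function $c_\alpha\colon\theta\to\{\alpha\}$ satisfies $\|c_\alpha\|\ge\alpha$ (an immediate induction, since $c_\beta<_{bd}c_\alpha$ whenever $\beta<\alpha$), so $\GH(\theta,\delta)=\|c_\delta\|\ge\delta$ for every $\delta$, and in particular $\GH(\theta,\lambda)\ge\lambda$ always. Thus the content of the lemma is the reverse inequality for unboundedly many $\theta$. Now, because $\theta<\lambda=\cf(\lambda)$, every $f\in{}^\theta\lambda$ has range bounded below $\lambda$, hence $f\in{}^\theta\delta$ for some $\delta<\lambda$ and $\|f\|<\GH(\theta,\delta)$; this gives $\GH(\theta,\lambda)=\sup_{\delta<\lambda}\GH(\theta,\delta)$. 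Combining this with $\GH(\theta,\delta)\ge\delta$ and the earlier lemma (that $\GH(\theta,\lambda)\le\lambda$ forces $\GH(\theta,\delta)<\lambda$ for all $\delta<\lambda$), I obtain
\begin{equation*}
\GH(\theta,\lambda)=\lambda\iff \GH(\theta,\delta)<\lambda\text{ for every }\delta<\lambda.
\end{equation*}
So it suffices to produce unboundedly many regular $\theta<\lambda$ for which no $\delta<\lambda$ satisfies $\GH(\theta,\delta)\ge\lambda$.

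Next I would isolate where the hypothesis is actually used. If $\lambda$ were a strong limit this would hold for \emph{every} $\theta$: for $\theta,\delta<\lambda$ the relation $<_{bd}$ lives on a set of size $\delta^\theta<\lambda$, so its rank, hence $\GH(\theta,\delta)$, stays below $\lambda$. The whole difficulty therefore concerns the pairs $(\theta,\delta)$ at which strong limitness fails, i.e.\ $\delta^\theta\ge\lambda$, and this is exactly the regime that J\'onssonness must control.

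For the main argument, suppose toward a contradiction that, for a fixed regular $\theta$, some $\delta<\lambda$ has $\GH(\theta,\delta)\ge\lambda$, and let $\delta$ be least such. From $\|c_\delta\|\ge\lambda$ one extracts a $<_{bd}$-increasing sequence $\langle g_\xi:\xi<\lambda\rangle$ in ${}^\theta\delta$ with $\|g_\xi\|=\xi$ (all ranks below $\|c_\delta\|$ are realized, and predecessors may be chosen increasing). I would then feed $\theta$, $\delta$, and this sequence into a J\'onsson model $M$ with $\theta\subseteq M$, and play the minimality of $\delta$ against the fact that $M$ witnesses J\'onssonness: the subsequence $\langle g_\xi:\xi\in M\cap\lambda\rangle$ consists of $\lambda$-many functions whose ranges lie in $M\cap\delta$, and reflecting this configuration through $M$ (via the transitive collapse, which fixes an initial segment of $\delta$ and collapses the part of $\delta$ that $M$ omits) should return a witness to $\GH(\theta,\bar\delta)\ge\lambda$ for some $\bar\delta<\delta$, contradicting minimality. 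Equivalently, one may read $\langle g_\xi\rangle$ as a scale of length $\lambda$ in a bounded product and convert its ``last point of disagreement'' into a coloring of $[\lambda]^{<\omega}$ with no weakly homogeneous set of size $\lambda$, directly contradicting $\lambda\to[\lambda]^{<\omega}_\lambda$.

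The hard part is precisely this reflection step, and it is where the real content of Shelah's proof sits. The obstacle is that a J\'onsson model need not omit enough ordinals below $\delta$ for the collapse to strictly decrease $\delta$: omitting only the first gap can still leave $\mathrm{ot}(M\cap\delta)=\delta$. One must therefore either arrange the model's gap to be a strong limit cardinal --- which tames all ranks below the gap and reduces every instance of $\GH(\theta,\delta)\ge\lambda$ to one with $\delta$ above the gap --- or carry out the scale-to-coloring construction so that finitely much disagreement data already forces every large weakly homogeneous set to meet all of $\lambda$. The restriction to \emph{unboundedly many} (rather than all) $\theta$ enters here: I would produce J\'onsson models whose gaps are strong limit cardinals cofinally in $\lambda$ and take $\theta$ regular below each gap. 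Two further points I would track carefully are the borderline case $\GH(\theta,\delta)=\lambda$ (where there is no single witness of rank $\lambda$, only a cofinal family, so the collapse or coloring must be applied to the whole family) and the cofinality requirement $\cf(\delta)>\theta$, which is what keeps the relevant ranges bounded after reflection.
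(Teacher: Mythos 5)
Your reduction is correct, but it is the easy half, and there is nothing in the paper to compare it against: \cref{Galvin Hajnal} is imported as a black box from Shelah (Claim 2.12 of Chapter IV of \cite{Sh}) and is never proved in the paper, so your attempt must stand entirely on its own. The first part does stand: $\GH(\theta,\lambda)\geq\lambda$ always holds, $\GH(\theta,\lambda)=\sup_{\delta<\lambda}\GH(\theta,\delta)$ follows from regularity of $\lambda$, and together with the paper's earlier lemma this gives $\GH(\theta,\lambda)=\lambda$ if and only if $\GH(\theta,\delta)<\lambda$ for every $\delta<\lambda$; likewise your observation that every regular $\theta$ works when $\lambda$ is a strong limit is fine, since then $\GH(\theta,\delta)\leq(\delta^\theta)^+<\lambda$. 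But everything after that --- the only place where J\'onssonness enters, hence the entire content of the lemma --- is explicitly deferred (``this is where the real content of Shelah's proof sits''). A correct reformulation plus an acknowledgment that the main step is missing is a genuine gap, not a proof.

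Moreover, the strategies you sketch for the main step cannot succeed as described. Your fallback of producing J\'onsson models whose gaps are strong limit cardinals cofinally in $\lambda$ is self-defeating: the paper's inaccessibles are only \emph{weakly} inaccessible, and your own strong-limit observation shows the lemma is trivial when $\lambda$ is a strong limit; when it is not, let $\kappa<\lambda$ be least with $2^\kappa\geq\lambda$ and note that every strong limit cardinal $\mu<\lambda$ satisfies $\mu\leq\kappa$ (otherwise $2^\kappa<\mu<\lambda$). So in the only case in which the lemma has content, strong limit cardinals are \emph{bounded} below $\lambda$: there are no such gaps to arrange cofinally, and any $\theta$'s extracted this way would all lie below $\kappa$ rather than unboundedly in $\lambda$. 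Separately, your extraction step is unjustified: from $\|c_\delta\|\geq\lambda$ you do get, for each $\xi<\lambda$, some $g_\xi<_{bd}c_\delta$ with $\|g_\xi\|=\xi$ (using transitivity of $<_{bd}$), but you cannot in general arrange these functions to be $<_{bd}$-increasing --- rank comparisons do not yield domination, and producing upper bounds at limit stages of cofinality above $\theta$ is precisely the nontrivial pcf-theoretic problem that exact-upper-bound machinery such as \cref{trichotomy} exists to address. The existence of a $<_{bd}$-increasing $\lambda$-sequence in ${}^\theta\delta$ is a strictly stronger assertion than $\GH(\theta,\delta)\geq\lambda$, and both your ``collapse'' route and your ``scale-to-coloring'' route are built on this unconstructed object.
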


Now suppose that $\lambda$ is inaccessible and J\'onsson, and let $\theta$ be regular such that $\GH(\theta,\lambda)=\lambda$. Fix $\gamma=\lambda\times n^* +\beta^*<\lambda\times\omega$, and let $J$ denote the collection of $S\subseteq S^\lambda_{>\theta}$ that are covered by increasing unions of length $\theta$ of elements from $I^{\rk}_\gamma$. The above theorem tells us then that 

\begin{equation*}
J\subseteq I^{\rk}_{<\lambda\times n^* + \GH(\theta,\beta^*)}\subseteq I^{\rk}_{<\lambda\times (n^*+1)}.
\end{equation*}
\\
Further, one can easily check that $J$ is an ideal. Thus, provided $ I^{\rk}_{<\lambda\times n^* + \GH(\theta,\beta^*)}$ is proper, it follows that $J$ is proper as well. We will make use of this observation in Section 5.

\section{J\'onsson Filters and Stationary Reflection}

In this section, we introduce the notion of a \changed{co}-J\'onsson filter and discuss several of their properties. Our investigations here are inspired by club guessing ideals that appear in the presence of a J\'onsson successor of a singular cardinal. It turns out that the duals of these ideals are \changed{co}-J\'onsson filters, and further that \changed{co}-J\'onsson filters carry many of the combinatorial consequences of the existence of a J\'onsson successor of a singular. 

\begin{definition}
\blfootnote{\changed{Originally, the author referred to these filters as J\'onsson filters. It was pointed out to the author that the notion of a J\'onsson filter appears in the literature with a different definition. As a co-J\'onsson filter is a filter on the codomain of a coloring, the change in name seemed reasonable.} } 
	Suppose that $\lambda$ is a J\'onsson cardinal and that $F$ is a uniform filter on $\lambda$. Then we say that $F$ is a \changed{\em co-J\'onsson filter} (depending on $x$) if there is an $x$ such that, for any $x$-J\'onsson model $M\prec H(\chi)$, we have that $M\cap\lambda\in F$. We say that an ideal $I$ is a \changed{co}-J\'onsson ideal (depending on $x$) if its dual $I^*$ is a \changed{co}-J\'onsson filter.
\end{definition}

The reason for limiting ourselves to \changed{co}-J\'onsson filters that depend on a particular parameter $x$ is because this is how \changed{co}-J\'onsson filters arise in practice. As we mentioned earlier, club guessing ideals provided us with the inspiration for investigating \changed{co}-J\'onsson filters. In fact, Eisworth first showed that these ideals satisfy the above property in \cite{Eis02} relative to $x=\{\bar C, S\}$-J\'onsson models where $S$ is a stationary set and $\bar C$ is a club guessing sequence.

Note that, if there is a \changed{co}-J\'onsson filter on $\lambda$ depending on some parameter $x$, then the intersection of any two $x$-J\'onsson models is again a J\'onsson model. Intuitively, this tells us that J\'onsson models are all very similar. Next we note that if $\lambda$ is a J\'onsson cardinal, and $F$ is a \changed{co}-J\'onsson filter as witnessed by $x$, we can define the minimal \changed{co}-J\'onsson filter coming from $\chi$ and $x$.

\begin{definition}
	Suppose that $\lambda$ is J\'onsson. Then we define the filter $F_x$ as the filter generated by sets of the form $M\cap\lambda$ where $M\prec H(\chi)$ is an $x$-J\'onsson model.
\end{definition}

It is worth noting that we have defined \changed{co}-J\'onsson filters using elementary submodels, but we could just as easily have utilized colorings.

\begin{lemma}\label{Coloring Equivalence}
	If $\lambda$ is J\'onsson, and $F$ is a uniform filter on $\lambda$, then $F$ is a \changed{co}-J\'onsson filter if and only if there is a coloring $G:[\lambda]^{<\omega}\to\lambda$ such that, for every $A\in[\lambda]^{\lambda}$, we have that $\ran(G\upharpoonright[A]^{<\omega})\in F$.
\end{lemma}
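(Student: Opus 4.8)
The plan is to prove the two directions separately, noting that the reverse implication is essentially immediate while the forward implication requires building a coloring that realizes Skolem hulls. For the reverse direction, suppose such a coloring $G$ exists, and take $x=G$ as the witnessing parameter. If $M$ is any $x$-J\'onsson model and we set $A=M\cap\lambda$, then $|A|=\lambda$, so $A\in[\lambda]^\lambda$. Since every finite $a\subseteq M\cap\lambda$ lies in $M$ and $G\in M$, elementarity gives $G(a)\in M\cap\lambda$; hence $\ran(G\upharpoonright[A]^{<\omega})\subseteq M\cap\lambda$. By hypothesis this range is in $F$, and because $F$ is upward closed we conclude $M\cap\lambda\in F$. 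As $M$ was an arbitrary $x$-J\'onsson model, $F$ is a co-J\'onsson filter.

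For the forward direction, suppose $F$ is co-J\'onsson as witnessed by $x$. I would fix Skolem functions for the structure $(H(\chi),\in,<_\chi,x,\lambda)$, treating $x$ and $\lambda$ as constants so that every Skolem hull automatically contains them. The goal is then to fold the countably many Skolem terms into a single coloring $G:[\lambda]^{<\omega}\to\lambda$ whose range over $[A]^{<\omega}$ is exactly $\mathrm{Sk}(A)\cap\lambda$, sending any term whose value is not an ordinal below $\lambda$ to $0$. Concretely, one records the index of the relevant Skolem term inside the finite input set, using that any $A\in[\lambda]^\lambda$ is infinite and cofinal in $\lambda$ and so supplies arbitrarily large finite subsets available to carry this index; this is a routine coding, analogous to the bookkeeping in the proof of \cref{Jonsson model}.

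Granting that $\ran(G\upharpoonright[A]^{<\omega})=\mathrm{Sk}(A)\cap\lambda$, I finish as follows. Fix $A\in[\lambda]^\lambda$ and let $N=\mathrm{Sk}(A)$. Then $N\prec (H(\chi),\in,<_\chi)$, $x,\lambda\in N$, and $|N\cap\lambda|=\lambda$ since $A\subseteq N$. If $\lambda\not\subseteq N$, then $N$ is an $x$-J\'onsson model, so $N\cap\lambda\in F$ by the co-J\'onsson hypothesis; if instead $\lambda\subseteq N$, then $N\cap\lambda=\lambda\in F$ since $F$ is a filter on $\lambda$. In either case $\ran(G\upharpoonright[A]^{<\omega})=N\cap\lambda\in F$, as desired.

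The main obstacle is the forward construction: packaging the countably many Skolem functions of $(H(\chi),\in,<_\chi,x,\lambda)$ into one coloring on $[\lambda]^{<\omega}$ so that, for every $A\in[\lambda]^\lambda$ (including thin sets whose small elements cannot comfortably be used to code term indices), the range over $[A]^{<\omega}$ still captures the entire trace $\mathrm{Sk}(A)\cap\lambda$. The point making this possible is that closure of a set of ordinals under the coloring can be arranged to coincide with closure under all Skolem functions, so that the elementary-submodel characterization of \cref{Jonsson model} transfers faithfully to the coloring side; once this is set up, the verification that each generated range lies in $F$ is exactly the case analysis of the previous paragraph.
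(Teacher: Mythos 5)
Your proposal is correct and takes essentially the same approach as the paper: for the forward direction the paper likewise builds a single Skolem coloring for a structure with $x$ as a constant, coding the index of the Skolem term by the cardinality of the finite input set (exactly the ``routine coding'' you defer to, which works for thin sets since only set sizes, not element values, carry the index), so that $\ran(G\upharpoonright[A]^{<\omega})$ equals $\mathrm{Sk}(A)\cap\lambda$, and then runs your same case split on whether $\mathrm{Sk}(A)$ is an $x$-J\'onsson model or contains all of $\lambda$. Your reverse direction is also identical to the paper's, using elementarity of a $\{G\}$-J\'onsson model and upward closure of $F$.
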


\begin{proof}
	First suppose that $F$ is a \changed{co}-J\'onsson filter, as witnessed by $\chi$ and $x\in H(\chi)$. Let $\mathfrak{A}=(H(\chi),\in, x, <_\chi)$ and enumerate the Skolem functions of $\mathfrak{A}$ by $\langle f_n : n<\omega\rangle$. We may arrange it so that each $f_n$ is $k_n$-ary where $k_n\leq n$. Using this, we can define the usual Skolem coloring $G$ by setting
	
	\begin{equation*}
	G(\alpha_1,\ldots,\alpha_n)=
	\begin{cases}
	f_n(\alpha_1,\ldots,\alpha_{k_n}) & \text{if this is an ordinal below }\lambda\\
	0 & \text{otherwise}
	\end{cases}
	\end{equation*}
	\\
	For $A\in[\lambda]^\lambda$, set $M_A=Sk^\mathfrak{A}(A)$, and note that $\ran(G\upharpoonright[A]^{<\omega})=\Tr(M_A)$ for each such $A$. As $\lambda, x\in M_A$, and $|M_A\cap \lambda|=\lambda$, it follows that either $M_A$ is a J\'onsson model or $\lambda\subseteq M_A$. In either case, we get that $M_A\cap\lambda\in F$.
	\\
	
	For the other direction, suppose that we have \changed{a} coloring $G:[\lambda]^{<\omega}\to\lambda$ such that, for every $A\in[\lambda]^{\lambda}$, the set $\ran(G\upharpoonright[A]^{<\omega})\in F$. Let $\chi>\lambda$ be sufficiently large and regular, set $x=\{G\}$, and fix a $(\chi, x)$-J\'onsson model $M$. Since $G\in M$, we see that $\ran(G\upharpoonright[M\cap\lambda]^{<\omega})\subseteq M\cap\lambda$, which immediately gives us that $M\cap\lambda\in F$. As $M$ was an arbitrary $(\chi, x)$-J\'onsson model, we see that $\chi$ and $x$ witness that $F$ is a \changed{co}-J\'onsson filter.
\end{proof}

Suppose that we have a \changed{co}-J\'onsson filter on a J\'onsson cardinal $\lambda$. If we \changed{let} $G$ be the coloring witnessing that this filter is J'onsson \changed{then for any $A\in[\lambda]^\lambda$, we get that $\ran(G\upharpoonright[A]^{<\omega})=_F\lambda$}. \changed{In that sense, we can think of $G$ as a witness to the fact that $\lambda$ is not J\'onsson modulo $F$ and so witnesses a near failure of J'onsonness.} On the other hand, \changed{co-}J\'onsson ideals are responsible for a number of large cardinal-type properties that J\'onsson successors of singulars enjoy. Part of the blame for this lies with a property known as $\theta$-irregularity, which can be traced back to problems in model theory.

\begin{definition}
	Suppose $I$ is an ideal on a set $A$, and $\theta$ is a regular cardinal. We say that $I$ is \changed{\em $\theta$-irregular} if the following holds: Whenever $\langle A_i : i<\theta\rangle$ is a sequence of $I$-positive sets, there is some $H\in[\theta]^\theta$ such that 
	
	\begin{equation*}
	\bigcap_{i\in H}A_i\neq\emptyset.
	\end{equation*}
\end{definition}

It is worth pointing out that the author mistakenly referred to the property above as $\theta$-regularity in an earlier work, \cite{Ahm}, as we will make use of several results from that paper. At this point, we would like to show that a \changed{co-}J\'onsson ideal on a J\'onsson cardinal $\lambda$ is always $\theta$-irregular for many regular $\theta$ below $\lambda$. We do this by way of partitioning irregularity into two separate properties.

\begin{definition}
	An ideal $I$ over some set $A$ is \changed{\em weakly $\theta$-saturated} for some cardinal $\theta$ if there is no partition of $A$ into $\theta$-many disjoint, $I$-positive pieces.
\end{definition}

One may think of weak saturation as a measure of how close the ideal $I$ is to being maximal. Note that if $\theta_0<\theta_1$ and $I$ is weakly $\theta_0$-saturated, then of course it will be weakly $\theta_1$-saturated. 

\begin{definition}
	An ideal $I$ over some set $A$ is \changed{\em $\theta$-indecomposable} for some regular cardinal $\theta$ if $I$ is closed under increasing unions of length $\theta$.
\end{definition}

We would like to remark that, unlike weak saturation, indecomposability is neither upwards nor downward hereditary. In fact, $\kappa$-completeness of an ideal $I$ is equivalent to the statement that $I$ is $\theta$-indecomposable for every regular $\theta<\kappa$.

\begin{lemma}[Proposition 2.6 of \cite{Eis3}]\label{lem: reg}
	Let $I$ be an ideal on a set $A$. The following are equivalent for a regular cardinal $\theta$.
	
	\begin{enumerate}
		\item $I$ is weakly $\theta$-saturated and $\theta$-indecomposable.
		\item Whenever $\langle B_i : i<\theta\rangle$ is a $\subseteq$-increasing $\theta$-sequence of subsets of $A$, then there is some $j^*<\theta$ such that
		
		\begin{equation*}
		j^*\leq j<\theta\quad\implies\quad B_j=_I \bigcup_{i<\theta}B_i.
		\end{equation*}
		\item $I$ is $\theta$-irregular.
	\end{enumerate}
\end{lemma}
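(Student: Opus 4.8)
The plan is to prove the cycle of implications $(1)\Rightarrow(2)\Rightarrow(3)\Rightarrow(1)$, since each arrow is more natural in this order. The statement concerns an ideal $I$ on a set $A$ and a regular cardinal $\theta$, relating the conjunction of weak $\theta$-saturation and $\theta$-indecomposability to $\theta$-irregularity, passing through a stabilization property for increasing $\theta$-sequences.

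For $(1)\Rightarrow(2)$, I would take a $\subseteq$-increasing sequence $\langle B_i : i<\theta\rangle$ and let $B=\bigcup_{i<\theta}B_i$. The idea is to consider the ``increments'' and use weak saturation to cap how many genuinely new $I$-positive pieces can appear. Concretely, suppose toward a contradiction that no such $j^*$ exists; then for cofinally many $j<\theta$ we have $B\setminus B_j\notin I$, and one can extract a strictly increasing sequence of indices $\langle j_\xi : \xi<\theta\rangle$ so that the differences $D_\xi = B_{j_{\xi+1}}\setminus B_{j_\xi}$ are $I$-positive along a cofinal set. These $D_\xi$ are pairwise disjoint, and assembling them (together with a remainder set) gives a partition of $A$ into $\theta$-many $I$-positive pieces, contradicting weak $\theta$-saturation. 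Indecomposability is what lets me conclude that the ``tail'' $B\setminus B_j$ being small implies $B_j=_I B$ rather than merely controlling each increment; it guarantees that an increasing union of $I$-sets stays in $I$, so once the new contribution stops being positive, equality modulo $I$ holds.

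For $(2)\Rightarrow(3)$, given a sequence $\langle A_i : i<\theta\rangle$ of $I$-positive sets, I would form the increasing sequence $B_i = \bigcup_{k<i} A_k$ (or a suitable variant) and apply (2) to locate a stabilization point $j^*$. The payoff is that beyond $j^*$ the sets agree modulo $I$ with the full union, which forces each later $A_j$ to meet the union positively; from this I would build the required $H\in[\theta]^\theta$ with $\bigcap_{i\in H}A_i\neq\emptyset$. The combinatorial heart here is a counting/pigeonhole argument turning ``almost all tails equal the union'' into a large homogeneous index set with nonempty common intersection.

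For $(3)\Rightarrow(1)$, I would prove the contrapositive for each conjunct. If $I$ is not weakly $\theta$-saturated, fix a partition $A=\bigsqcup_{i<\theta}A_i$ into $I$-positive pieces; these are pairwise disjoint, so $\bigcap_{i\in H}A_i=\emptyset$ for every $H$ of size $\geq 2$, violating irregularity. If $I$ is not $\theta$-indecomposable, take a witnessing increasing sequence of $I$-sets whose union is $I$-positive and convert it into a sequence of $I$-positive sets whose every large subfamily has empty intersection. \textbf{The main obstacle} I anticipate is the $(1)\Rightarrow(2)$ direction: extracting the pairwise-disjoint positive increments requires care to ensure there are genuinely $\theta$-many of them (not fewer), and this is exactly where both hypotheses must be used in tandem—indecomposability to handle the limit behavior of the union and weak saturation to derive the contradiction from the disjoint family. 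I would expect this to be the step demanding the most delicate bookkeeping, while the other two implications are comparatively direct.
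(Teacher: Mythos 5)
Your outer architecture (the cycle $(1)\Rightarrow(2)\Rightarrow(3)\Rightarrow(1)$) is sound, and two of the three arrows are essentially right. In $(1)\Rightarrow(2)$, indecomposability is used exactly as you say: if every increment $B_i\setminus B_j$ (for $j<i<\theta$) were in $I$, then the tail $B\setminus B_j$, being an increasing union of length $\theta$ of $I$-sets, would lie in $I$; so failure of stabilization yields $\theta$-many pairwise disjoint $I$-positive increments $D_\xi=B_{j_{\xi+1}}\setminus B_{j_\xi}$, and merging the leftover $A\setminus\bigcup_{\xi<\theta}D_\xi$ into one piece contradicts weak $\theta$-saturation. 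In $(3)\Rightarrow(1)$, the disjoint-partition argument is correct, and the ``conversion'' you allude to for indecomposability is $A_i=B\setminus B_i$: each is $I$-positive, while $\bigcap_{i\in H}A_i=B\setminus\bigcup_{i\in H}B_i=\emptyset$ for every unbounded $H\subseteq\theta$. (For what it is worth, the paper gives no proof at all---it cites Proposition 2.6 of Eisworth---so you are being measured against the cited argument, which runs along these same lines.)

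The genuine gap is in $(2)\Rightarrow(3)$. From stabilization of $B_i=\bigcup_{k<i}A_k$ you correctly get $A_j\setminus B_j\subseteq B\setminus B_j\in I$, hence each $A_j$ with $j\ge j^*$ meets $\bigcup_{k<j}A_k$ in an $I$-positive set. But that intermediate statement is too weak to produce a single point lying in $\theta$-many of the $A_i$: ``chain'' configurations (e.g.\ $A_i=\{i,i+1\}$ as subsets of $\theta$, with the trivial ideal) satisfy it while every point lies in at most two of the sets, so no counting or pigeonhole argument can bridge from your intermediate statement to the conclusion; any correct proof must invoke (2) a second time, on a different increasing sequence, and this is the idea your sketch is missing. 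Namely, work with the fibers: for $x\in B=\bigcup_{i<\theta} A_i$ put $H_x=\{i<\theta : x\in A_i\}$, and suppose toward a contradiction that every $H_x$ has size $<\theta$, hence (by regularity of $\theta$) is bounded. Apply (2) to the increasing sequence $C_j=\{x\in B : H_x\subseteq j\}$, whose union is $B$: there is $j^*$ with $B\setminus C_{j^*}\in I$. Since $x\in A_{j^*}$ implies $j^*\in H_x$, we have $A_{j^*}\cap C_{j^*}=\emptyset$, so $A_{j^*}\subseteq B\setminus C_{j^*}\in I$, contradicting positivity of $A_{j^*}$. With this replacement for your pigeonhole step, the cycle of implications goes through.
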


So in order \changed{to} show that any \changed{co-}J\'onsson ideal is $\theta$-irregular, it suffices to show that it is $\theta$-indecomposable and weakly $\theta$-saturated. 

\begin{lemma}
	Suppose that $\lambda$ is a J\'onsson cardinal, and that $I$ is a \changed{co-}J\'onsson ideal over $\lambda$. If $I$ is not weakly $\theta$-saturated, then there exists a coloring $c:[\lambda]^{<\omega}\to\theta$ with no weakly homogeneous set.
\end{lemma}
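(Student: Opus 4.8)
The plan is to compose the co-J\'onsson coloring guaranteed by \cref{Coloring Equivalence} with the index map of a bad partition, and then read off the failure of weak homogeneity directly. Since $I$ is a co-J\'onsson ideal, its dual $I^*$ is a co-J\'onsson filter, so applying \cref{Coloring Equivalence} to $F=I^*$ yields a coloring $G:[\lambda]^{<\omega}\to\lambda$ with the property that $\ran(G\upharpoonright[A]^{<\omega})\in I^*$ for every $A\in[\lambda]^\lambda$; equivalently, $\lambda\setminus\ran(G\upharpoonright[A]^{<\omega})\in I$ for every such $A$.

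Next I would use the failure of weak $\theta$-saturation to fix a partition $\langle P_j : j<\theta\rangle$ of $\lambda$ into $\theta$-many pairwise disjoint $I$-positive pieces. Let $h:\lambda\to\theta$ be the map sending each $\alpha<\lambda$ to the unique $j<\theta$ with $\alpha\in P_j$, and define the desired coloring $c:[\lambda]^{<\omega}\to\theta$ by $c=h\circ G$.

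It remains to verify that $c$ has no weakly homogeneous set, and this is where the argument closes. Suppose toward a contradiction that some $A\in[\lambda]^\lambda$ is weakly homogeneous, so that $\ran(c\upharpoonright[A]^{<\omega})\subsetneq\theta$, and fix a color $j^*<\theta$ omitted by $c$ on $[A]^{<\omega}$. Then $h(G(s))\neq j^*$ for every $s\in[A]^{<\omega}$, which says exactly that $\ran(G\upharpoonright[A]^{<\omega})\cap P_{j^*}=\emptyset$, i.e. $\ran(G\upharpoonright[A]^{<\omega})\subseteq\lambda\setminus P_{j^*}$. But $\ran(G\upharpoonright[A]^{<\omega})\in I^*$ and $I^*$ is upward closed, so $\lambda\setminus P_{j^*}\in I^*$, whence $P_{j^*}\in I$, contradicting that $P_{j^*}$ is $I$-positive.

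The only genuine obstacle is bookkeeping the duality between $I$ and $I^*$ correctly; the heart of the matter is simply that composing $G$ with $h$ converts the $I^*$-largeness of each range into a guarantee that every one of the $\theta$ color classes is hit, so no $A\in[\lambda]^\lambda$ can omit a color. I expect no calculation beyond this, since every ingredient is already packaged in \cref{Coloring Equivalence} and the definitions of weak saturation and weak homogeneity.
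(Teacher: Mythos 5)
Your proposal is correct and follows essentially the same route as the paper: both take the coloring $G$ furnished by \cref{Coloring Equivalence} applied to $I^*$, compose it with the index map of a partition witnessing the failure of weak $\theta$-saturation, and conclude that the range of $G$ on any $A\in[\lambda]^\lambda$, being in the filter, must meet every $I$-positive piece. The only cosmetic difference is that you argue by contradiction from an omitted color while the paper directly observes that $\ran(c\upharpoonright[A]^{<\omega})=\theta$.
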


The proof is identical to the proof of Corollary 5.28 of \cite{Eis1}, but we include it for the sake of completeness.

\begin{proof}
	Let $\langle A_i : i<\theta\rangle$ be a sequence of disjoint $I$-positive sets witnessing that $I$ is not weakly $\theta$ saturated. Let $I^*$ denote the filter dual to $I$, and let $G:[\lambda]^{<\omega}\changed{\to\lambda}$ witness that $I^*$ is a \changed{co-}J\'onsson filter. We define a coloring $c:[\lambda]^{<\omega}\to \theta$ by setting $c(\alpha_1,\ldots, \alpha_n)$ to be the unique $i<\theta$ such that $G(\alpha_1,\ldots,\alpha_n)\in A_i$.
	
	If $A\in[\lambda]^\lambda$, then $\ran(G\upharpoonright[A]^{<\omega})\in F$ and hence meets every element of $A_i$ for each $i<\theta$. But then, $\ran(c\upharpoonright[A]^{<\omega})=\theta$ by construction.
\end{proof}

From this, we get the following corollary.

\begin{corollary}\label{Weak Saturation}
	If $I$ is a \changed{co-}J\'onsson ideal over a J\'onsson cardinal $\lambda$, then $I$ is weakly $\theta$-saturated for some $\theta<\lambda$. If $\lambda=\mu^+$, then we can find such a $\theta$ below $\mu$.
\end{corollary}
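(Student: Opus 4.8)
The plan is to combine the preceding lemma with the classical color–reduction phenomenon for J\'onsson cardinals. Taking the contrapositive of the preceding lemma: if $\lambda\to[\lambda]^{<\omega}_\theta$ holds, then every coloring $c:[\lambda]^{<\omega}\to\theta$ admits a weakly homogeneous set, so no coloring as in the conclusion of that lemma can exist, and hence its hypothesis must fail, i.e.\ $I$ is weakly $\theta$-saturated. Thus it suffices to produce a single cardinal $\theta<\lambda$ (respectively $\theta<\mu$ when $\lambda=\mu^+$) for which $\lambda\to[\lambda]^{<\omega}_\theta$.

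For the limit (e.g.\ inaccessible) case I would argue directly, contradicting J\'onssonness if no such $\theta$ existed. Suppose $I$ fails to be weakly $\theta$-saturated for every cardinal $\theta<\lambda$; then, by the preceding lemma, for each such $\theta$ we may fix a coloring $c_\theta:[\lambda]^{<\omega}\to\theta$ with no weakly homogeneous set. Fixing the assignment $\beta\mapsto|\beta|$ (nondecreasing with $\sup_{\beta<\lambda}|\beta|=\lambda$), I define $c:[\lambda]^{<\omega}\to\lambda$ by setting, for $s$ with $|s|\geq 2$ and $\beta=\min s$, $c(s)=c_{|\beta|}(s\setminus\{\beta\})$ and $c(s)=0$ otherwise. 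Given $A\in[\lambda]^\lambda$ and a target color $\gamma<\lambda$, pick $\beta\in A$ with $|\beta|>\gamma$; since $A\cap(\beta,\lambda)$ still has size $\lambda$ and $c_{|\beta|}$ has no weakly homogeneous set, some $t\subseteq A\cap(\beta,\lambda)$ satisfies $c_{|\beta|}(t)=\gamma$, whence $c(\{\beta\}\cup t)=\gamma$. Thus $\ran(c\upharpoonright[A]^{<\omega})=\lambda$ for every $A\in[\lambda]^\lambda$, contradicting $\lambda\to[\lambda]^{<\omega}_\lambda$. Hence some $\theta<\lambda$ must work.

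The same combining trick reduces the number of colors below $\mu$ whenever $\mu$ is itself a limit cardinal, but it stalls exactly when $\mu$ is a successor cardinal, since then the cardinals below $\mu$ have supremum strictly below $\mu$ and the coloring $c$ assembled above cannot realize $\mu$-many colors. This is the main obstacle, and it is precisely the point at which the deeper structure of J\'onsson successors enters: the required strengthening $\mu^+\to[\mu^+]^{<\omega}_\theta$ for some $\theta<\mu$ is the classical color–reduction theorem for successor cardinals (see \cite{Eis1}, and ultimately \cite{Sh}), whose proof in the successor-of-singular case exploits club guessing on $\mu^+$ to cut the number of colors below $\cf(\mu)<\mu$. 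Granting that result, the contrapositive of the preceding lemma again delivers a witnessing $\theta<\mu$ and completes the proof.
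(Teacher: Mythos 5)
Your limit-case argument is correct, and it takes a genuinely different route from the paper's. You amalgamate the bad colorings $c_\theta$ into a single coloring $c:[\lambda]^{<\omega}\to\lambda$, using $|\min s|$ to select which $c_\theta$ to apply, and contradict J\'onssonness of $\lambda$ outright; the paper instead fixes a J\'onsson model $M$ containing the colorings and observes that each bad $c_\theta\in M$ forces $\theta=\ran(c_\theta\upharpoonright[M\cap\lambda]^{<\omega})\subseteq M$, so that $\lambda\subseteq M$, which is impossible. Your version is more elementary (no elementary submodels at all), modulo trivial bookkeeping such as $c_{|\beta|}$ being undefined when $\min s$ is finite; the paper's version is the one that extends smoothly to the successor case, which is precisely where your proposal stops proving things.

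For $\lambda=\mu^+$ you do not give an argument: you invoke as a black box the theorem that if $\mu^+$ is J\'onsson then $\mu^+\to[\mu^+]^{<\omega}_\theta$ for some $\theta<\mu$. That statement is indeed a theorem of Shelah appearing in the literature you cite, and combined with the contrapositive of the preceding lemma it does close the case, so I would not call this outright wrong; but be aware of what it costs. The cited theorem is essentially equivalent in strength to the successor half of the corollary (applied to club-guessing ideals it is proved by the very argument at issue), so invoking it trades a short self-contained argument for a citation that carries all the content. The paper instead argues directly: choose the J\'onsson model $M$ so that $\cf(\mu)\subseteq M$ (a known, and much weaker, fact about J\'onsson successors of singulars); a cofinal map $f:\cf(\mu)\to\mu$ lying in $M$ then makes $M\cap\mu$ cofinal in $\mu$, so every cardinal $\theta\in M\cap\mu$ carrying a bad coloring gets swallowed ($\theta\subseteq M$), whence $\mu\subseteq M$ and then $\lambda\subseteq M$, a contradiction. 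Two further inaccuracies in your sketch: the color-reduction theorem yields some $\theta<\mu$, not a reduction to below $\cf(\mu)$, and its standard proof is the submodel argument just described rather than a club-guessing construction; also, when $\mu^+$ is J\'onsson, $\mu$ is singular and hence a limit cardinal, so the real reason your amalgamation stalls is not that cardinals below $\mu$ are bounded in $\mu$, but that the cardinals below $\lambda=\mu^+$ have supremum $\mu<\lambda$, so the assembled coloring realizes only $\mu$ colors and cannot contradict $\lambda\to[\lambda]^{<\omega}_\lambda$.
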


\begin{proof}
	We first assume that $\lambda$ is limit and suppose otherwise, i.e. that there is no $\theta<\lambda$ such that $I$ is weakly $\theta$-saturated. By the previous lemma, this means that $\lambda\not\to[\lambda]^{<\omega}_\theta$ for unboundedly-many $\theta<\lambda$. Let $\chi$ be large enough and regular so that $H(\chi)$ knows this, and let $M\prec H(\chi)$ be a J\'onsson model. For each $\theta\in M\cap\lambda$, fix a coloring $c_\theta:[\lambda]^{<\omega}\to\theta$ in $M$ which witnesses $\lambda\not\to[\lambda]^{<\omega}_\theta$. But then, $\ran(c_\theta\upharpoonright [M\cap\lambda]^{<\omega})\subseteq M$ for each such $\theta$, and so $\lambda\subseteq M$ which is a contradiction.
	
	Note that if $\lambda=\mu^+$ for $\mu$ singular, we only need to show that $\mu\subseteq M$. With this in mind, note that we may have chosen $M$ be a J\'onsson model with $\cf(\mu)\subseteq M$. Since $\cf(\mu), \mu\in M$, we can find a map $f:\cf(\mu)\to\mu$ in $M$ with image cofinal in $\mu$. Thus, $\mu\cap M$ is unbounded in $\mu$, and we instead get that $\mu\subseteq M$ by the same argument as above. 
\end{proof}

In \cite{Sh}, it is shown that the following lemma holds for certain types of \changed{co-}J\'onsson ideals (\changed{Claim 2.10 of chapter IV}). It turns out that this holds for any  \changed{co-}J\'onsson ideal.

\begin{lemma}
	Assume that $\lambda$ is J\'onsson and that $I$ is a \changed{co-}J\'onsson ideal on $\lambda$, as witnessed by a parameter $x$. For $\sigma<\lambda$ regular, if $I$ is not $\sigma$-indecomposable and $M\prec H(\chi)$ is an $x\cup\{I\}$-J\'onsson model with $\sigma\in M$, then $|M\cap\sigma|=\sigma$.
\end{lemma}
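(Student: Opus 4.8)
The plan is to argue by contradiction, extracting from the failure of $\sigma$-indecomposability a witnessing $\subseteq$-increasing sequence that can be taken inside $M$, and then showing that if $M\cap\sigma$ were bounded in $\sigma$ the whole sequence would be ``captured'' inside a single one of its $I$-small terms, contradicting that the co-J\'onsson hypothesis keeps $M\cap\lambda$ large.

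First I would unwind the hypothesis that $I$ is not $\sigma$-indecomposable: by definition this gives a $\subseteq$-increasing sequence $\bar B=\langle B_j : j<\sigma\rangle$ with each $B_j\in I$ but $\bigcup_{j<\sigma}B_j\in I^+$. Since $\sigma,I\in M$ and the assertion ``$I$ is not $\sigma$-indecomposable'' is expressible from these parameters, elementarity lets me pick such a sequence $\bar B\in M$; then for each $j\in M\cap\sigma$ we also have $B_j\in M$, and $B_j\in I$ persists by absoluteness of membership. Next I would invoke the co-J\'onsson hypothesis: because $M$ is an $x\cup\{I\}$-J\'onsson model it is in particular an $x$-J\'onsson model, so $M\cap\lambda\in I^*$, i.e. $\lambda\setminus M\in I$. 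Deleting this $I$-null set from the $I$-positive set $\bigcup_{j<\sigma}B_j$ leaves an $I$-positive set, whence $\left(\bigcup_{j<\sigma}B_j\right)\cap M\in I^+$; in particular it is nonempty.

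Now suppose toward a contradiction that $|M\cap\sigma|<\sigma$. Since $\sigma$ is regular, $M\cap\sigma$ is bounded in $\sigma$, say $\gamma:=\sup(M\cap\sigma)<\sigma$, so $B_\gamma\in I$ is well defined and $B_j\subseteq B_\gamma$ for every $j\in M\cap\sigma$. Given any $\alpha\in\left(\bigcup_{j<\sigma}B_j\right)\cap M$, the statement $\exists j<\sigma\,(\alpha\in B_j)$ holds in $H(\chi)$ with parameters $\alpha,\bar B,\sigma\in M$, so by elementarity there is such a $j$ in $M$, i.e. $j\in M\cap\sigma$, forcing $j\leq\gamma$ and hence $\alpha\in B_j\subseteq B_\gamma$. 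Therefore $\left(\bigcup_{j<\sigma}B_j\right)\cap M\subseteq B_\gamma\in I$, so this set lies in $I$, contradicting that it is $I$-positive. Thus $|M\cap\sigma|=\sigma$.

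The only genuinely delicate point I anticipate is the elementarity bookkeeping: ensuring that the witnessing sequence $\bar B$ may be chosen inside $M$ and, crucially, that the index $j$ produced by elementarity lands in $M\cap\sigma$ (hence below $\gamma$) rather than merely somewhere below $\sigma$. Everything else is routine ideal arithmetic (an $I^+$ set minus an $I$-null set is again in $I^+$, and a subset of a member of $I$ is in $I$) together with the use of regularity of $\sigma$ to bound $M\cap\sigma$.
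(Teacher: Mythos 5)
Your proof is correct and takes essentially the same route as the paper's: elementarity places a witnessing sequence inside $M$, the co-J\'onsson property makes $M\cap\lambda$ a set in $I^*$, and boundedness of $M\cap\sigma$ then traps an $I$-positive set inside a single $I$-small term. The only cosmetic difference is that the paper first disjointifies the union into a function $h:A\to\sigma$ and derives a pointwise contradiction ($h(\gamma)\in M$ yet $h(\gamma)\notin M\cap\sigma$), whereas you work with the $\subseteq$-increasing sequence directly and conclude via the containment $\bigl(\bigcup_{j<\sigma}B_j\bigr)\cap M\subseteq B_\gamma$.
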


\begin{proof}
	As $I$ is not $\sigma$-indecomposable, we can find a sequence $\langle A_i : \changed{i}<\sigma\rangle$ such that $\bigcup_{i<\sigma}A_i\notin I$, but for every $w\in[\sigma]^{<\sigma}$, we have $\bigcup_{i\in \sigma}A_i\in I$. Set $A=\bigcup_{i<\sigma}A_i$. By disjointifying $A$, we can build a function $h:A\to \sigma$ such that, for every $w\in[\sigma]^{<\sigma}$,
	
	\begin{equation*}
	\{\alpha\in A : h(\alpha)\notin w\}\notin I
	\end{equation*} 
	\\
	Now let $M$ be a $(\chi,x\cup\{I\})$-J\'onsson model with $\sigma\in M$. By elementarity, we may assume that $A,h\in M$. 
	
	We claim that $|M\cap \sigma|=\sigma$. Otherwise, $M\cap\sigma=w\in[\sigma]^{<\sigma}$, and so 
	
	\begin{equation*}
	B=h^{-1}[\sigma\setminus M]=\{\alpha\in A : h(\alpha)\notin w\}\notin I
	\end{equation*} 
	\\
	As $B\notin I$, we can find some $\gamma\in M\cap\lambda\cap B$. However, we have that $h\in M$, and so $h(\gamma)\in M$ which is absurd.
\end{proof}

\begin{lemma}\label{Indecomposable}
	Suppose that $\lambda$ is a J\'onsson cardinal, and that $I$ is a \changed{co-}J\'onsson ideal (possibly depending on $x$) over $\lambda$. Then $I$ is $\theta$-indecomposable for unboundedly-many $\theta\in\changed{\lambda\cap\REG}$.
\end{lemma}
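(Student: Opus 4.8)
The plan is to prove the contrapositive of the preceding lemma for a single, carefully chosen J\'onsson model, producing for each $\zeta<\lambda$ a regular cardinal $\delta\in(\zeta,\lambda)$ for which $I$ is $\delta$-indecomposable; since $\zeta$ is arbitrary, this yields unboundedly many such $\delta$. Fix $\zeta<\lambda$ and, exactly as in the proof of \cref{Weak Saturation}, choose an $x\cup\{I\}$-J\'onsson model $M\prec H(\chi)$ with $\zeta\subseteq M$ (so that the least ordinal missing from $M$ lies at or above $\zeta$). The key objects are the two ``gap'' ordinals $\gamma=\min(\lambda\setminus M)$ and $\delta=\min(M\setminus\gamma)$. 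These exist and satisfy $\zeta\le\gamma<\delta<\lambda$, because $\lambda\not\subseteq M$ forces $\gamma<\lambda$ while $|M\cap\lambda|=\lambda$ forces $M\cap[\gamma,\lambda)\neq\emptyset$. By minimality we have $M\cap\gamma=\gamma$, and more importantly any element of $M$ below $\delta$ must lie below $\gamma$, so $M\cap\delta=\gamma$ and hence $|M\cap\delta|=|\gamma|<\delta$.

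The heart of the argument is to show that $\delta$ is a regular cardinal. First, $\delta$ cannot be a successor ordinal, since its predecessor would lie in $M\setminus\gamma$ strictly below $\delta$, contradicting the minimality of $\delta$. For regularity, suppose toward a contradiction that $\rho:=\cf(\delta)<\delta$. As $\delta\in M$ we have $\rho\in M$, and since $\rho<\delta$ this forces $\rho\in M\cap\delta=\gamma$; in particular $\rho\subseteq\gamma\subseteq M$. The $<_\chi$-least increasing cofinal map $c\colon\rho\to\delta$ then belongs to $M$, and since every input $\xi<\rho$ already lies in $M$, its entire range lies in $M\cap\delta=\gamma$. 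But $\ran(c)$ is cofinal in $\delta$, so $\delta=\sup\ran(c)\le\gamma$, contradicting $\gamma<\delta$. Thus $\cf(\delta)=\delta$, and $\delta$ is a regular cardinal with $\zeta<\delta<\lambda$.

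With $\delta\in M$ regular and $|M\cap\delta|<\delta$, the preceding lemma applies in its contrapositive form: were $I$ not $\delta$-indecomposable, that lemma would give $|M\cap\delta|=\delta$. Hence $I$ is $\delta$-indecomposable. Letting $\zeta$ range over $\lambda$ produces regular cardinals $\delta$ with $I$ $\delta$-indecomposable that are cofinal in $\lambda$, which is exactly the claim.

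The main obstacle I anticipate is the very first step: arranging an $x\cup\{I\}$-J\'onsson model $M$ with $\zeta\subseteq M$ rather than merely $\zeta\in M$. This is the same strengthening of \cref{Jonsson model} that is already invoked in \cref{Weak Saturation}, and it is precisely what guarantees that the first gap $\gamma$, and hence the regular cardinal $\delta$, is pushed above $\zeta$; without it the gap could sit below $\zeta$ and would carry no information about $\delta$-indecomposability for large $\delta$. Once this containment is in hand, everything else is a routine unwinding of minimality and elementarity.
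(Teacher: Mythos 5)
Your skeleton --- the gap ordinals $\gamma=\min(\lambda\setminus M)$ and $\delta=\min(M\setminus\gamma)$, the argument that $\delta$ is regular, and the appeal to the contrapositive of the preceding lemma --- is sound, but the proof has a genuine gap at exactly the step you flag as the ``main obstacle'': the existence, for every $\zeta<\lambda$, of an $x\cup\{I\}$-J\'onsson model $M$ with $\zeta\subseteq M$. This is \emph{not} the same strengthening invoked in \cref{Weak Saturation}: there the paper asks only that $\cf(\mu)\subseteq M$ in the case $\lambda=\mu^+$, a single fixed ordinal, whereas you need models absorbing arbitrarily large initial segments of $\lambda$, cofinally often. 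Demanding $\zeta\subseteq M$ amounts to demanding that $\lambda$ be J\'onsson for structures in a language of size $|\zeta|$ (the Skolem functions together with a constant for each ordinal below $\zeta$), which does not follow from the countable-language characterization in \cref{Jonsson model} and is proved nowhere in the paper. Worse, the blanket claim is provably false in one of the cases the lemma must cover. If $\lambda=\mu^+$ with $\mu$ singular (a case the paper's own proof treats explicitly), then no J\'onsson model contains $\mu$: if $\mu\subseteq M$, $\mu^+\in M$, and $|M\cap\mu^+|=\mu^+$, then for each $\alpha\in M\cap[\mu,\mu^+)$ elementarity yields a surjection $f\colon\mu\to\alpha$ with $f\in M$, so $\alpha=f[\mu]\subseteq M$; since $M\cap\mu^+$ is unbounded, $\mu^+\subseteq M$, so $M$ is not a J\'onsson model. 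Hence for $\zeta\geq\mu$ your models do not exist at all; and since in this case $\sup(\lambda\cap\REG)=\mu$, your argument would still need $\zeta$ ranging cofinally below $\mu$, where the existence of such models is likewise not known (the paper can only arrange $\cf(\mu)\subseteq M$). For inaccessible $\lambda$ the required models for arbitrary $\zeta<\lambda$ are equally unsupported.

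The paper sidesteps this entirely by arguing by contradiction from a \emph{single} J\'onsson model $M$, with no containment requirement. Assuming $I$ is $\theta$-indecomposable for only boundedly many regular $\theta<\lambda$, it observes that $M$ is closed under $\alpha\mapsto|\alpha|^{++}$ and $M\cap\lambda$ is unbounded, so $M$ contains unboundedly many successors of regular cardinals $\sigma$ at which indecomposability fails; the preceding lemma then gives $|M\cap\sigma|=\sigma$ for each such $\sigma$; since successors of regulars are never J\'onsson (by \cref{Tryba-Woodin}), \cref{Jonsson model} forces $\sigma\subseteq M$ for each of them, and hence $\lambda\subseteq M$ (with one extra step recovering $\mu^+$ from $\mu\subseteq M$ in the successor case) --- contradicting that $M$ is a J\'onsson model. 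Note that the paper obtains the inclusions $\sigma\subseteq M$ as a \emph{consequence} of the contradiction hypothesis rather than assuming them as an input; that maneuver is precisely what your proof is missing, and without either proving your model-existence claim or restructuring along these lines, the argument does not go through.
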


\begin{proof}
	We first note if $\kappa$ is a regular cardinal, then $\kappa^+$ is not J\'onsson \changed{since} \cref{Tryba-Woodin} tells us that every stationary subset of a regular J\'onsson cardinal reflects. Now let $M\prec H(\chi)$ be an $x\cup \{I\}$-J\'onsson model over $\lambda$, and suppose that $I$ is $\theta$-indecomposable for only boundedly-many $\theta\in\changed{\lambda\cap\REG}$. By elementarity, $|\alpha|^{++}\in M$ for every $\alpha\in M$. As $\lambda$ is unbounded in $M$, it thus follows that
	
	\begin{equation*}
	X=\{\sigma\in M : \sigma=\kappa^+\text{ for }\kappa\text{ regular, and }I\text{ is not }\sigma\text{-indecomposable}\}
	\end{equation*} 
	\\
	is unbounded in $\lambda\cap\REG$. By the previous lemma, we know that $|M\cap\sigma|=\sigma$ for each $\sigma\in X$. Each such $\sigma$ is not J\'onsson, and so $\sigma\subseteq M$. \changed{In the case that $\lambda$ is limit}, it follows immediately from the fact that $X$ is unbounded in $\lambda$ that $\lambda\subseteq M$ which contradicts the fact that $M$ was a J\'onsson model. \changed{On the other hand, if $\lambda=\mu^+$ for $\mu$ singular, then $X$ is instead unbounded in $\mu$, which tells us that $\mu\subseteq M$. But then, we can use this to recover all of $\mu^+$ as a subset of $M$, again contradicting the fact that $M$ is a J\'onsson model.}
\end{proof}

From this, it immediately follows that if $\lambda$ is a J\'onsson cardinal, and $I$ is a \changed{co-}J\'onsson ideal on $\lambda$, then $I$ is both $\theta$-indecomposable and weakly $\theta$-saturated for unboundedly-many cardinals in $\lambda\cap \mathrm{REG}$. 

\begin{corollary}
	Suppose that $\lambda$ is a J\'onsson cardinal. If $I$ is a \changed{co-}J\'onsson ideal on $\lambda$, then $I$ is $\theta$-irregular for unboundedly-many regular $\theta$ in $\lambda\cap\REG$.
\end{corollary}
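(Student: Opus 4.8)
The plan is to combine the two immediately preceding results via the characterization of irregularity from \cref{lem: reg}. By \cref{Weak Saturation}, there is some $\theta_0<\lambda$ such that $I$ is weakly $\theta_0$-saturated, and hence (since weak saturation is upward hereditary) $I$ is weakly $\theta$-saturated for \emph{every} regular $\theta$ with $\theta_0\leq\theta<\lambda$. Separately, \cref{Indecomposable} tells us that $I$ is $\theta$-indecomposable for unboundedly-many regular $\theta\in\lambda\cap\REG$.

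First I would fix the witness $\theta_0<\lambda$ to weak saturation coming from \cref{Weak Saturation}. Next, I would invoke \cref{Indecomposable} to obtain the (unbounded in $\lambda$) set of regular cardinals $\theta$ for which $I$ is $\theta$-indecomposable. The key observation is that these two sets overlap in an unbounded way: since the indecomposability set is unbounded in $\lambda\cap\REG$, and the weak saturation condition holds for \emph{all} regular $\theta\geq\theta_0$, the intersection (that is, the indecomposable $\theta$'s lying above $\theta_0$) is still unbounded in $\lambda\cap\REG$. For each regular $\theta$ in this overlap, $I$ is simultaneously weakly $\theta$-saturated and $\theta$-indecomposable, so condition (1) of \cref{lem: reg} holds.

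I would then simply apply the equivalence $(1)\Leftrightarrow(3)$ of \cref{lem: reg} to conclude that $I$ is $\theta$-irregular for each such $\theta$, and since the overlap is unbounded in $\lambda\cap\REG$, the corollary follows.

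I do not anticipate a genuine obstacle here, as this is essentially a bookkeeping argument stitching together \cref{Weak Saturation}, \cref{Indecomposable}, and \cref{lem: reg}. The only point requiring a moment's care is the observation that the unboundedness of the indecomposability set is preserved after discarding the (bounded) initial segment of regular cardinals below $\theta_0$, which is immediate since we are cutting off a bounded tail from an unbounded set in $\lambda$.
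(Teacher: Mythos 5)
Your proposal is correct and matches the paper's argument: the paper likewise combines \cref{Weak Saturation} with the upward heredity of weak saturation and \cref{Indecomposable} to get both properties simultaneously for unboundedly many regular $\theta$, then concludes via the equivalence in \cref{lem: reg}.
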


As we mentioned earlier, $\theta$-irregularity lends us some large cardinal-type properties. In particular, we can obtain simultaneous reflection. Given an ideal $I$ on $\lambda$, we fix some notation.

\begin{enumerate}
	\item $\mathrm{Comp}(I)$ is the largest cardinal $\theta$ such that $I$ is $\theta$-complete.
	\item $\mathrm{Reg}(I)=\{\theta < \lambda : I\text{ is }\theta\text{-irregular}\}$.
	\item $S(I)=\{\alpha<\lambda  : \cf(\alpha)<\alpha \text{ and }\cf(\alpha)\in\mathrm{Reg}(I)\}$.\\
\end{enumerate}

\begin{theorem}[Eisworth, \cite{Eis3}]\label{Simultaneous Reflection}
	Let $I$ be an ideal on a cardinal $\lambda$. If there is a cardinal $\theta$ such that $I$ is $\theta$-irregular, then
	
	\begin{enumerate}
		\item $S(I)$ is stationary, and
		\item Any fewer than $\mathrm{Comp}(I)$-many stationary subsets of $S(I)$ reflect simultaneously.
	\end{enumerate}
\end{theorem}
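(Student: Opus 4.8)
The plan is to prove the two clauses separately, with clause (2) resting on a single-set reflection lemma combined with the completeness of $I$.

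For clause (1): the hypothesis furnishes a regular cardinal $\theta$ with $I$ being $\theta$-irregular, and we may take $\theta<\lambda$, so that $\theta\in\mathrm{Reg}(I)$ and in particular $\mathrm{Reg}(I)\neq\emptyset$. Since every club $C\subseteq\lambda$ contains ordinals of every cofinality $\theta<\lambda$ below it (enumerate $C$ and look at those accumulation points whose index has cofinality $\theta$), and any such point $\alpha>\theta$ satisfies $\cf(\alpha)=\theta\in\mathrm{Reg}(I)$ with $\cf(\alpha)<\alpha$, we obtain $\alpha\in S(I)$. Hence $S(I)$ meets every club and is stationary.

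For clause (2), fix a regular $\theta\in\mathrm{Reg}(I)$ for which $S^\lambda_\theta$ is $I$-positive (using that $S(I)=\bigcup_{\theta'\in\mathrm{Reg}(I)}S^\lambda_{\theta'}$ and that stationary subsets of $S(I)$ are $I$-positive, so that after refining some single cofinality class is positive), and let $\langle T_j : j<\kappa\rangle$ with $\kappa<\mathrm{Comp}(I)$ be stationary, hence $I$-positive, subsets of $S(I)$. The key reduction is this: it suffices to prove the single-set statement that, for every $I$-positive $T$, the non-reflection set $N_T=\{\alpha\in S^\lambda_\theta : T\cap\alpha \text{ is non-stationary in }\alpha\}$ lies in $I$. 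Granting this, since $\kappa<\mathrm{Comp}(I)$ the ideal $I$ is $\kappa^+$-complete, so $\bigcup_{j<\kappa}N_{T_j}\in I$; as $S^\lambda_\theta\notin I$, we may pick $\alpha\in S^\lambda_\theta\setminus\bigcup_{j<\kappa}N_{T_j}$, and then every $T_j\cap\alpha$ is stationary in $\alpha$ with $\alpha\in S(I)$, which is exactly simultaneous reflection.

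The heart of the argument, and the step I expect to be the main obstacle, is the single-set reflection lemma $N_T\in I$, which is where $\theta$-irregularity is spent. The plan here is to argue by contradiction: assuming $N_T$ is $I$-positive, attach to each $\alpha\in N_T$ a club $c_\alpha\subseteq\alpha$ of order type $\theta$ with $c_\alpha\cap T=\emptyset$, and then manufacture from the family $\langle c_\alpha : \alpha\in N_T\rangle$ a sequence of $\theta$-many $I$-positive sets to which the irregularity of $I$ applies. Using \cref{lem: reg}, $\theta$-irregularity is equivalent to $I$ being simultaneously weakly $\theta$-saturated and $\theta$-indecomposable: weak $\theta$-saturation prevents the level-$i$ values $\{c_\alpha(i) : \alpha\in N_T\}$ from splitting $N_T$ into $\theta$ positive classes and so forces them to concentrate, while $\theta$-indecomposability lets this concentration survive into the limit along the $c_\alpha$. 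The common point produced by irregularity should then be an element of $T$ sitting on one of the clubs $c_\alpha$, contradicting $c_\alpha\cap T=\emptyset$. The delicate point is orchestrating the $\theta$-indexed family so that the forced intersection is genuinely a point of $T$; this is precisely the interplay between weak saturation and indecomposability, and it is also where the coherence of $I$ with the nonstationary ideal (which underwrites the $I$-positivity of the $T_j$ and of $S^\lambda_\theta$ used above) is essential.
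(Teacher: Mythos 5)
The paper does not actually prove \cref{Simultaneous Reflection}; it quotes it from \cite{Eis3}. Judged on its own terms, your clause (1) is essentially fine (it uses only that $\mathrm{Reg}(I)\neq\emptyset$ and that $\lambda$ is regular), but clause (2) rests on a bridge between stationarity and $I$-positivity that is not a hypothesis and is in fact \emph{refuted} by the hypotheses. You assume that $S^\lambda_\theta$ is $I$-positive and, crucially, that stationary subsets of $S(I)$ are $I$-positive. By \cref{lem: reg}, $\theta$-irregularity makes $I$ weakly $\theta$-saturated, so any pairwise disjoint family of $I$-positive sets has size less than $\theta$; on the other hand, by Solovay's splitting theorem $S^\lambda_{\theta'}$ (for any regular $\theta'\in\mathrm{Reg}(I)$) partitions into $\lambda$-many pairwise disjoint stationary sets, all of which (above $\theta'$) are stationary subsets of $S(I)$. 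Hence $\lambda$-many stationary subsets of $S(I)$ necessarily lie in $I$: the theorem's own hypotheses force your premise to fail. Consequently the given sets $T_j$ may be $I$-null, and your single-set lemma, even if it were true, could never be applied to them. Any correct proof must spend the stationarity of the $T_j$ directly, without first converting it into $I$-positivity.

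There is a second, independent failure: you pin the reflection point to cofinality $\theta$. If $\cf(\alpha)=\theta$ and $c\subseteq\alpha$ is club of order type $\theta$, then every accumulation point of $c$ has cofinality less than $\theta$, so $S^\alpha_\theta$ is non-stationary in $\alpha$; thus no $T\subseteq S^\lambda_\theta$ reflects at any point of cofinality $\theta$, i.e. $N_T=S^\lambda_\theta$ for every such $T$. Since $S^\lambda_\theta\setminus(\theta+1)\subseteq S(I)$, a stationary $T_1\subseteq S^\lambda_\theta$ is a legitimate input, and then $S^\lambda_\theta\setminus\bigcup_{j<\kappa}N_{T_j}=\emptyset$, so your final step can never produce a point. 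This also refutes the ``heart'' lemma outright: if $I$ is the dual of a $\lambda$-complete ultrafilter concentrating on $S^\lambda_\theta$ (with $\lambda$ measurable), then $I$ is $\theta$-irregular and proper, $T=S^\lambda_\theta\setminus(\theta+1)$ is $I$-positive and stationary, yet $N_T=_{\NS}S^\lambda_\theta\in I^+$. A workable route, using only tools already quoted in this paper, runs through the ideal rather than through the $T_j$: by \cref{trichotomy} (which needs exactly weak $\theta$-saturation plus $\theta$-indecomposability) fix an exact upper bound $f$ of the constant functions modulo $I$, with $\cf(f(a))$ uncountable for $I$-almost all $a$; if $\{a<\lambda : T\cap f(a)\text{ non-stationary in }f(a)\}$ were $I$-positive, then choosing avoiding clubs $c_a\subseteq f(a)$ and using exactness to bound each function $a\mapsto\min(c_a\setminus\beta)$ modulo $I$ by a constant $\xi_\beta$, one builds a club $C\subseteq\lambda$ such that irregularity (the $H\in[\theta']^{\theta'}$ intersection property applied to the $I$-positive sets attached to the levels of $C$) forces every $\delta\in\acc(C)$ of cofinality $\theta'$ onto some $c_a$; then $T\cap\acc(C)=\emptyset$, contradicting the stationarity of $T$ in $\lambda$. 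Here stationarity is played against a club of $\lambda$, never against $I$. Finally, since $\kappa<\mathrm{Comp}(I)$, the union of the $\kappa$-many $I$-null non-reflection sets is $I$-null, giving a single $a$ with every $T_j\cap f(a)$ stationary; note the reflection point $f(a)$ has large cofinality and is not a point of $S^\lambda_\theta$.
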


This immediately gives us the following corollary.

\begin{corollary}
	Suppose $\lambda$ is J\'onsson. If there is a J\'onsson ideal $I$ on $\lambda$, then any fewer than $\mathrm{Comp}(I)$-many stationary subsets of $S(I)$ reflect simultaneously. 
\end{corollary}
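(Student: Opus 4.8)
The plan is to combine the two immediately preceding results into the desired conclusion. The final corollary concerns a J\'onsson cardinal $\lambda$ equipped with a co-J\'onsson ideal $I$, and its proof should be a short composition: first invoke the corollary showing that $I$ is $\theta$-irregular for unboundedly-many regular $\theta$ below $\lambda$, and then feed this into \cref{Simultaneous Reflection} (Eisworth's theorem on simultaneous reflection). The only real content is checking that the hypotheses of Eisworth's theorem are met and that its conclusion specializes correctly to our situation.

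First I would note that since $\lambda$ is J\'onsson and $I$ is a co-J\'onsson ideal on $\lambda$, the preceding corollary guarantees the existence of some regular cardinal $\theta < \lambda$ for which $I$ is $\theta$-irregular (in fact unboundedly many such $\theta$, though we only need one to apply the theorem). This establishes the hypothesis of \cref{Simultaneous Reflection}, namely that there is a cardinal $\theta$ witnessing $\theta$-irregularity of $I$. Applying that theorem, we conclude both that $S(I)$ is stationary and that any fewer than $\mathrm{Comp}(I)$-many stationary subsets of $S(I)$ reflect simultaneously, which is precisely the statement of the corollary.

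The proof is essentially a one-line deduction, so there is no serious obstacle to anticipate; the work has already been done in assembling the irregularity machinery. The one point worth stating carefully is that the definitions of $\mathrm{Comp}(I)$, $\mathrm{Reg}(I)$, and $S(I)$ carry over verbatim, so the conclusion of Eisworth's theorem transfers with no modification. I would therefore write the proof as follows.

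\begin{proof}
	Since $\lambda$ is J\'onsson and $I$ is a co-J\'onsson ideal on $\lambda$, the preceding corollary tells us that $I$ is $\theta$-irregular for unboundedly-many regular $\theta\in\lambda\cap\REG$. In particular, there is at least one cardinal $\theta$ such that $I$ is $\theta$-irregular, so the hypothesis of \cref{Simultaneous Reflection} is satisfied. Applying that theorem, we conclude that $S(I)$ is stationary and that any fewer than $\mathrm{Comp}(I)$-many stationary subsets of $S(I)$ reflect simultaneously, as desired.
\end{proof}
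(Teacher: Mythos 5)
Your proposal is correct and takes essentially the same approach as the paper: the paper presents this corollary as an immediate consequence of the preceding $\theta$-irregularity corollary combined with \cref{Simultaneous Reflection}, which is exactly the one-line composition you wrote out. No gap here; the only substantive content is the hypothesis check you performed.
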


Next, we establish a connection between J\'onsson filters and club guessing. We include the proof of the below result, as it serves as a prototypical example of how one produces J\'onsson Filters. We would like to point out that the theorem is essentially shining a light on what Lemma 1.8 and Lemma 1.9 from Chapter III of  \cite{Sh} are saying.

\begin{theorem}[Essentially Shelah]\label{idpci is Jonsson}
	Suppose that $\lambda$ is a regular J\'onsson cardinal. If $S\subseteq\lambda$ is a stationary set not reflecting at inaccessibles, and $\bar C=\langle C_\delta : \delta\in S\rangle$ is a club guessing sequence, then \changed{$I(\bar C, \NS_\lambda)$} is a $\{\bar C, S\}$-J\'onsson ideal on $\lambda$.
\end{theorem}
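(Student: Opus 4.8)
I want to show that $I(\bar C,\NS_\lambda)$ is a $\{\bar C,S\}$-J\'onsson ideal, i.e. that its dual filter is a co-J\'onsson filter depending on $x=\{\bar C,S\}$. By the definition of a co-J\'onsson filter, it suffices to fix an arbitrary $x$-J\'onsson model $M\prec(H(\chi),\in,<_\chi)$ and show $M\cap\lambda\in I(\bar C,\NS_\lambda)^*$, equivalently that $\lambda\setminus(M\cap\lambda)\in I(\bar C,\NS_\lambda)$. Unwinding the definition of $I(\bar C,\NS_\lambda)$, setting $A=\lambda\setminus M$, I must produce a single club $E\subseteq\lambda$ such that
\begin{equation*}
\{\delta\in S : \nacc(C_\delta)\cap A\cap E\notin I_\delta\}\in\NS_\lambda.
\end{equation*}
In other words, for club-many $\delta\in S$ I want $\nacc(C_\delta)\cap(\lambda\setminus M)\cap E\in I_\delta$, which says the $\nacc$ points of $C_\delta$ that land \emph{outside} $M$ are negligible in $I_\delta$. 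The natural candidate is $E=\{\delta<\lambda : \delta=\sup(M\cap\delta)\}$, the set of closure points of $M\cap\lambda$, which is club in $\lambda$ since $|M\cap\lambda|=\lambda$.

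First I would set up the model and note the two regimes from \cref{Swallowing Ladders}: for $\delta\in S\cap M$ we have $\nacc(C_\delta)\cap E\subseteq M$, so $\nacc(C_\delta)\cap A\cap E=\emptyset$ and the set is trivially in $I_\delta$; the content is entirely in $\delta\in S\setminus M$. For such $\delta$, the Ladder Swallowing lemma gives
\begin{equation*}
\{\alpha\in\nacc(C_\delta) : \cf(\alpha)>\cf(\beta_M(\delta))\}\cap E\subseteq M,
\end{equation*}
so the only $\nacc(C_\delta)$-points that can escape $M$ (and hence lie in $A$) are those with $\cf(\alpha)\le\cf(\beta_M(\delta))$. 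Writing $\gamma=\cf(\beta_M(\delta))^+$, this means $\nacc(C_\delta)\cap A\cap E\subseteq\{\alpha\in C_\delta : \cf(\alpha)<\gamma\}$, which is \emph{exactly} one of the generators of $I_\delta$ — provided $\gamma$ is a regular cardinal strictly below $\delta$. So the whole argument reduces to verifying, for club-many $\delta\in S\setminus M$, that $\cf(\beta_M(\delta))^+<\delta$.

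This last point is where the hypotheses on $S$ are used, and it is the main obstacle. Since $M$ is a J\'onsson model, for $\delta\notin M$ with $\delta=\sup(M\cap\delta)$ the value $\beta_M(\delta)=\min(M\setminus\delta)$ is a limit of uncountable cofinality, and I need $\cf(\beta_M(\delta))$ to be genuinely small relative to $\delta$. The hypothesis that $S$ does not reflect at inaccessibles should feed in here: it controls the cofinalities $\cf(\beta_M(\delta))$ so that they stay below $\delta$ for club-many $\delta$, ruling out the degenerate case where $\beta_M(\delta)$ is so large (e.g. $\cf(\beta_M(\delta))\ge\delta$) that $I_\delta$ has no generator swallowing the escaping points. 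I would argue that the set of $\delta\in S$ for which $\cf(\beta_M(\delta))^+\ge\delta$ is nonstationary — intuitively, such $\delta$ would have to be (close to) inaccessible and witness reflection of $S$, contradicting the non-reflection assumption. Intersecting $E$ with the club avoiding these bad $\delta$ (and with the club of closure points) yields the required club, completing the verification that $\lambda\setminus M\in I(\bar C,\NS_\lambda)$; since $M$ was arbitrary, the dual filter is co-J\'onsson and $I(\bar C,\NS_\lambda)$ is the desired $\{\bar C,S\}$-J\'onsson ideal.
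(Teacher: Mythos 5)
Your setup is exactly the paper's: fix an arbitrary $\{\bar C,S\}$-J\'onsson model $M$, take $E=\{\delta<\lambda:\delta=\sup(M\cap\delta)\}$, dispose of $\delta\in S\cap M$ with the last clause of \cref{Swallowing Ladders}, and reduce the case $\delta\in S\cap E\setminus M$ to verifying $\cf(\beta_M(\delta))<\delta$, so that the points of $\nacc(C_\delta)\cap E$ escaping $M$ fall into the generator $\{\alpha\in C_\delta:\cf(\alpha)<\cf(\beta_M(\delta))^+\}$ of $I_\delta$. Up to that point the proposal is correct and is the same argument as the paper's.

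However, the step you defer --- ``I would argue that the set of $\delta\in S$ with $\cf(\beta_M(\delta))^+\ge\delta$ is nonstationary; such $\delta$ would have to be (close to) inaccessible and witness reflection of $S$'' --- is precisely the content of the theorem, and the intuition you offer is aimed at the wrong ordinal, so it would not turn into a proof. The non-reflection hypothesis is applied to $\beta_M(\delta)$, not to $\delta$, and the conclusion is pointwise (for every $\delta\in S\cap E\setminus M$, after pruning $S$), not merely for club-many $\delta$. The missing argument runs as follows. (i) Discard a non-stationary set so that every $\delta\in S$ is a (limit) cardinal; then $\beta_M(\delta)$ must be a limit cardinal, since otherwise $|\beta_M(\delta)|$, or the cardinal predecessor of $\beta_M(\delta)$, is definable from $\beta_M(\delta)\in M$, hence lies in $M$, hence is $<\delta$ by minimality of $\beta_M(\delta)=\min((M\cap\lambda)\setminus\delta)$ --- absurd, because $\delta$ is a cardinal below $\beta_M(\delta)$. (ii) $S$ reflects at $\beta_M(\delta)$: if $S\cap\beta_M(\delta)$ were non-stationary then, since $S,\beta_M(\delta)\in M$, elementarity gives a club $d\in M$ of $\beta_M(\delta)$ disjoint from $S$, while \cref{Reflection at beta delta} forces $\delta\in\acc(d)\subseteq d$, contradicting $\delta\in S$. (iii) Hence $\beta_M(\delta)$ is not inaccessible --- this is the only place the hypothesis on $S$ enters --- so, being a limit cardinal, it is singular, and $\cf(\beta_M(\delta))$, being definable from $\beta_M(\delta)$, lies in $M\cap\beta_M(\delta)$ and therefore below $\delta$ by minimality again. (In the remaining case $\lambda=\mu^+$ with $\mu$ singular, the bound is immediate: prune $S$ above $\mu$, so $\cf(\beta_M(\delta))\le\mu$ and regularity force $\cf(\beta_M(\delta))<\mu<\delta$.) Without (i)--(iii) your proof has a genuine hole exactly where the hypothesis ``not reflecting at inaccessibles'' must do its work.
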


\begin{proof}
	Let $x=\{\bar C, S\}$, and let \changed{$M$ be an $x$-J\'onsson model}. \changed{Note that since $I(\bar C, \NS_\lambda)$ is a proper ideal extending the $\NS_\lambda$, we only need to show that $\lambda\setminus (M\cap\lambda)\in I(\bar C, \NS_\lambda)$.} Let
	
	\begin{equation*}
	E=\{\delta<\lambda :\delta=\sup(M_\cap\delta)\},
	\end{equation*}
	\\
	and note that $E$ is club in $\lambda$. As $\bar C$ is a club guessing sequence the set
	
	\begin{equation*}
	T=\{\delta\in S : C_\delta\cap E\notin I_\delta\}
	\end{equation*}
	\\
	is stationary in $\lambda$.
	
	At this point, we would like to invoke \cref{Swallowing Ladders}, but the problem is that we first need to show that $\cf(\beta_{M}(\delta))<\delta$ for each $\delta\in S\setminus M$. \changed{With that in mind, we fix $\delta\in S\setminus M$ and begin by noting that $\beta_M(\delta)$ is a limit ordinal of uncountable cofinality. We first assume that $\lambda=\mu^+$ for $\mu$ singular, so we may assume that $S\subseteq\mu^+\setminus \mu$ without disturbing the club guessing. In this case, we know that $\cf(\beta_M(\delta))<\mu$ but also that $\mu<\delta$, and so we are done.} 

\changed{Now assume that $\lambda$ is inaccessible, and note that we may assume that each $\delta\in S$ is a cardinal by removing a non-stationary set from $S$. We first claim that $\beta_M(\delta)$ is a limit cardinal. To see this, we note that if $\beta_M(\delta)$ fails to be a cardinal, then $|\beta_M(\delta)|<\beta_M(\delta)$. On the other hand, the fact that $\beta_M(\delta)\in M$ tells us that $|\beta_M(\delta)|\in M$ and so it follows from definition of $\beta_M(\delta)$ that $|\beta_M(\delta)|<\delta=|\delta|<\beta_M(\delta)$, which is absurd. Similarly, if $\beta_M(\delta)$ is a succesor cardinal, then its predecessor is definable in $M$ and would have to be below $\delta$, which is itself a cardinal. So, $\beta_M(\delta)$ is a limit cardinal.} 
	
\changed{At this point, we only need to show that $\cf(\beta_M(\delta))<\beta_M(\delta)$ since the fact that $\cf(\beta_M(\delta))$ is definable in $M$ will give us that $\cf(\beta_M(\delta))<\delta$. With this in mind, we will show that $S$ reflects at $\beta_M(\delta)$, which will tell us that $\beta_M(\delta)$ cannot be inaccessible and hence must be singular since it is a limit cardinal. If $S$ fails to reflect at $\beta_M(\delta)$, since both $S$ and $\beta_M(\delta)$ are in $M$ it follows from elementarity that there is some} $d\subseteq\beta_M(\delta)$ be club with $d\in M$ and \changed{$S\cap\beta_M(\delta)=\emptyset$. But it follows from} \cref{Reflection at beta delta} that $\delta$ is an accumulation point of $d$ and hence in $d$,\changed{ which is a contradiction since then $\delta\in S\cap\delta$. Thus, for $\delta\in S\setminus M$, we have that $\cf(\beta_M(\delta))<\delta$.}
	
	Fix $\delta \in S\cap E$. By \cref{Swallowing Ladders}, we know that either\\
	
	\begin{enumerate}
		\item $\delta\in M$ and thus $\nacc(C_\delta)\cap E\subseteq M$, or
		\item $\delta\notin M$ and thus $\nacc(C_\delta)\cap E\cap \mathrm{cof}(>\cf(\beta_M(\delta)))\subseteq M$.\\
	\end{enumerate}
	
	\changed{So if $\delta\in M$, we immediately see that $\nacc(C_\delta)\cap E\cap(\lambda\setminus (M\cap\lambda))$ is empty. On the other hand, if $\delta\notin M$, then every $\alpha\in \nacc(C_\delta)\cap E\cap(\lambda\setminus (M\cap\lambda))$ satisfies $\cf(\alpha)<\cf(\beta_M(\delta))<\delta$ and thus $\nacc(C_\delta)\cap E\cap(\lambda\setminus (M\cap\lambda))\in I_\delta$. From this, it follows that $\lambda\setminus (M\cap\lambda)\in I(\bar C, \NS_\lambda)$.}
\end{proof}

Note that If $\lambda$ is inaccessible then we can find a stationary set not reflecting at inaccessible \changed{assuming that} $\lambda$ is not $\omega$-Mahlo. We close this section by noting that the $\theta$-irregularity of J\'onsson ideals afford us some tools from pcf theory. In order to do this, we quote a consequence of Theorem 2.1 of \cite{Ahm}.

\begin{theorem}\label{trichotomy}
	Suppose that $A$ is a set of ordinals, and $I$ is an ideal on $A$. Suppose that $\theta<\lambda$ are regular cardinals such that $I$ is $\theta$-indecomposable and weakly $\theta$-saturate. If $\vec{f}=\langle f_\xi : \xi <\lambda\rangle$ is a $<$-increasing sequence of functions from $A$ to $\mathrm{ON}$, then $\vec{f}$ has an exact upper bound (\changed{modulo $I$}) $f\in {}^A\mathrm{ON}$ with $\{a\in A : \cf(f(a))<\kappa\}\in I$ for every regular $\kappa<\lambda$.
\end{theorem}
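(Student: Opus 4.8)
The plan is to show that $\vec f$ must land in the ``good'' case of the standard trichotomy for $<_I$-increasing sequences, and then to extract the cofinality condition from exactness together with indecomposability. The single engine behind everything is the following consequence of \cref{lem: reg}: since $\theta<\lambda$ and $\lambda$ is regular, every sequence of sets indexed by $\lambda$ that is $\subseteq$-increasing modulo $I$ must stabilize modulo $I$. Indeed, a non-stabilizing such sequence would exhibit cofinally many strict jumps, and selecting $\theta$ of them (possible because $\theta<\cf(\lambda)=\lambda$) would produce a strictly $\subseteq_I$-increasing sequence of length $\theta$, contradicting clause (2) of \cref{lem: reg}.

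First I would establish existence of an exact upper bound. Recall $f$ is an exact upper bound modulo $I$ if $f_\xi\le_I f$ for every $\xi<\lambda$ and every $g<_I f$ satisfies $g<_I f_\xi$ for some $\xi$. The only structural obstruction to such an $f$ is chaotic behaviour, witnessed by some $h\colon A\to\mathrm{ON}$ for which the sets $\{a : f_\xi(a)\ge h(a)\}$ fail to stabilize modulo $I$. But for a fixed $h$ this sequence is $\subseteq_I$-increasing in $\xi$ (as $\vec f$ is $<_I$-increasing), so by the stabilization principle above it does stabilize; hence no chaotic $h$ exists. Having excluded chaos, the stable values $[h]:=\lim_\xi\{a : f_\xi(a)\ge h(a)\}$ are well defined modulo $I$ and monotone in $h$, and I would use them to define $f$ pointwise as the least upper bound and check exactness -- the verification being routine precisely because non-chaos guarantees every relevant family of sets settles. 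Any residual ``bad'' alternative, in which incompatible guesses for the bound proliferate, similarly yields either a non-stabilizing $\lambda$-indexed family or a partition of a positive set into $\theta$ disjoint positive pieces, each impossible by \cref{lem: reg}.

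Next I would derive the cofinality conclusion from exactness. Fix a regular $\kappa<\lambda$ and suppose, toward a contradiction, that $C_\kappa:=\{a : \cf(f(a))<\kappa\}\notin I$; for $a\in C_\kappa$ choose $c_a\subseteq f(a)$ cofinal of order type $\rho_a:=\cf(f(a))$. When $\kappa\le\theta$ the argument is clean. Writing $C_\kappa$ as the increasing union over $\rho<\kappa$ of the sets $\{a : \rho_a\le\rho\}$ -- a union of length $\le\theta$ -- $\theta$-indecomposability lets me assume $\rho_a\le\rho$ for a single $\rho<\theta$ on a positive subset. Then $\ot(c_a\cap f_\xi(a))$ stays below $\theta$; for each $i<\rho$ the function $h_i(a)=c_a(i)$ satisfies $h_i<_I f$, so exactness gives $\xi_i<\lambda$ with $f_{\xi_i}(a)\ge c_a(i)$ for $I$-almost all $a$. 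Setting $\xi^*=\sup_{i<\rho}\xi_i<\lambda$ and intersecting the $\rho<\theta$ many conull sets $\{a : f_{\xi^*}(a)\ge c_a(i)\}$ -- an operation that preserves conullity, again by closure under increasing unions of length $\le\theta$ -- I obtain $f_{\xi^*}(a)\ge\sup_i c_a(i)=f(a)$ for $I$-almost all $a\in C_\kappa$, contradicting the fact that $f_{\xi^*}<_I f$ (strict, since $f_{\xi^*}<_I f_{\xi^*+1}\le_I f$).

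The step I expect to be the genuine obstacle is upgrading this bound from $\kappa\le\theta$ to every regular $\kappa<\lambda$. The rank argument must control intersections (equivalently, increasing unions) of length equal to the cofinalities $\rho_a$ in play, and one instance of $\theta$-indecomposability only supplies this for lengths $\le\theta$; limit stages of cofinality strictly between $\theta$ and $\lambda$ are exactly where the construction can fail. The resolution is to run the argument at the right scale: for a prescribed $\kappa$ one works with a cardinal $\theta'\ge\kappa$ at which $I$ is both indecomposable and weakly saturated, reducing $C_\kappa\subseteq C_{\theta'}$ to the clean regime. Weak saturation is upward closed, so it is automatic once $\theta'$ is large, and by \cref{Indecomposable} a co-J\'onsson ideal is $\theta'$-indecomposable for unboundedly many regular $\theta'<\lambda$ (with weak saturation furnished as in \cref{Weak Saturation}); choosing such a $\theta'$ above any given $\kappa$ then closes the argument for all regular $\kappa<\lambda$. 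Managing this propagation carefully -- together with the complementary roles of weak $\theta$-saturation, which caps the number of positive cofinality classes, and indecomposability, which unions them -- is the crux of the proof.
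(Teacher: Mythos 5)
The paper never proves \cref{trichotomy} at all: it is imported as a black box, quoted as a consequence of Theorem 2.1 of \cite{Ahm}, so what you have attempted is a from-scratch reconstruction of that external result, and the reconstruction has genuine gaps. The most serious one is the existence of the exact upper bound. Your claim that ``the only structural obstruction to such an $f$ is chaotic behaviour'' is precisely what has to be proved, and it is not a general fact: Shelah's trichotomy has \emph{three} alternatives, and even the implication ``no ugly and no bad implies good'' is the substance of the trichotomy theorem, whose classical proof runs a transfinite recursion of length $|A|^+$ and needs $\lambda\geq|A|^{++}$. Here no relationship between $|A|$ and $\lambda$ is assumed --- that is exactly what the extension in \cite{Ahm} is for --- so that recursion must be replaced by an argument in which weak $\theta$-saturation and $\theta$-indecomposability do the work. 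Your one-sentence dismissal of ``any residual bad alternative'' and the assertion that checking exactness of a pointwise-defined least upper bound is ``routine'' skip precisely this content.

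There are two further problems. You twice use $\theta$-indecomposability on families of length strictly less than $\theta$: once to pass from $C_\kappa=\bigcup_{\rho<\kappa}\{a:\rho_a\le\rho\}$ to a single $\rho$, and once to intersect $\rho<\theta$ many conull sets. But $\theta$-indecomposability is closure under increasing unions of order type exactly $\theta$; it is not downward hereditary (the paper itself notes that $\kappa$-completeness is equivalent to $\theta$-indecomposability for \emph{every} regular $\theta<\kappa$), and a union of length $\rho<\theta$ cannot be reindexed as a $\theta$-union. The case $\kappa\leq\theta$ can instead be handled by clause (3) of \cref{lem: reg}: pick $\theta$ many indices at which the projections onto the sets $c_a$ jump on an $I$-positive set; a common point of $\theta$ many jump sets gives $\ot(c_a)\geq\theta$. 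Finally, your proposed fix for the range $\theta<\kappa<\lambda$ --- which you correctly identify as the crux --- is not available: \cref{Indecomposable} and \cref{Weak Saturation} are statements about co-J\'onsson ideals on J\'onsson cardinals, whereas \cref{trichotomy} concerns an arbitrary ideal on an arbitrary set of ordinals, irregular at a single $\theta$; its hypotheses provide no J\'onsson cardinal and no co-J\'onsson filter from which a larger suitable $\theta'$ could be extracted. (That move would be legitimate for the co-J\'onsson ideals to which the paper applies the theorem, but it does not prove the theorem as stated.) So the cofinality bound for regular $\kappa$ between $\theta$ and $\lambda$ is left entirely open, and that is exactly the part for which the paper leans on \cite{Ahm}.
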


\begin{theorem}\label{Jonssons below}
	If $\lambda$ is an inaccessible J\'onsson cardinal such that there is a \changed{co}-J\'onsson filter on $\lambda$, then the regular J\'onsson cardinals are unbounded in $\lambda\cap\mathrm{REG}$.
\end{theorem}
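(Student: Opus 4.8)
The plan is to reduce the statement to a counting property of a single J\'onsson model, and then to use the co-J\'onsson machinery of this section to force that property onto regular cardinals. Fix a regular $\chi>\lambda$, let $F$ be the co-J\'onsson filter with parameter $x$, and recall from \cref{Coloring Equivalence} that $F$ is witnessed by a coloring $G\colon[\lambda]^{<\omega}\to\lambda$. The key observation is that \cref{Jonsson model} is \emph{local}: if $M\prec(H(\chi),\in,<_\chi)$ is any elementary submodel and $\kappa\in M$ is a cardinal with $|M\cap\kappa|=\kappa$ and $\kappa\not\subseteq M$, then $M$ itself already witnesses that $\kappa$ is J\'onsson. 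So, writing $\gamma=\min(\lambda\setminus M)$ (whence $\kappa\not\subseteq M$ holds automatically for every $\kappa>\gamma$), it suffices to produce a single J\'onsson model $M$ for $\lambda$ together with cofinally many \emph{regular} cardinals $\kappa\in M$ with $\kappa=\sup(M\cap\kappa)$, which for regular $\kappa$ is the same as $|M\cap\kappa|=\kappa$. The co-J\'onsson filter buys us room here: every $x$-J\'onsson model $M$ has $M\cap\lambda\in F$, so there is an abundance of such models and we are free to choose one with favorable internal structure.

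To manufacture accumulation points inside $M$, I would take $M=\bigcup_{i<\lambda}M_i$ to be the union of a continuous $\in$-increasing chain $\langle M_i:i<\lambda\rangle$ of elementary submodels of $H(\chi)$ with $x\in M_0$, $M_i\in M_{i+1}$, and $|M_i|=|i|+\aleph_0$, using $G$ to keep $\lambda\not\subseteq M$. For limit $i$ the ordinal $\delta_i=\sup(M_i\cap\lambda)$ lies in $M$ (since $\langle M_j:j<i\rangle\in M_{i+1}\subseteq M$) and satisfies $\delta_i=\sup(M\cap\delta_i)$; the map $i\mapsto\delta_i$ is increasing and continuous, so its fixed points $C=\{i:\delta_i=i\}$ form a club in $\lambda$. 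At a fixed point $i\in C$ that is moreover a regular cardinal with $|M_i|=i$, one obtains precisely $\kappa=i\in M$ with $|M\cap\kappa|=\kappa$, and hence (once $\kappa>\gamma$) a regular J\'onsson cardinal. Thus the entire theorem reduces to showing that $C$ meets the regular cardinals cofinally in $\lambda$.

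This last point is the main obstacle, and it is exactly where the co-J\'onsson filter — as opposed to bare J\'onssonness — must be used. The trouble is a genuine dichotomy: the limit-cardinal points of $C$ are typically singular (yielding only \emph{singular} J\'onsson cardinals), while the inaccessible points of $C$ need not be cofinal unless $\lambda$ is Mahlo, which we cannot assume here without circularity against the Mahlo bound proved in Section~5. The remaining regular cardinals below $\lambda$ are successors of singulars, and the guiding principle of this section is that a co-J\'onsson filter makes $\lambda$ behave like a J\'onsson successor of a singular; the plan is to exploit this to locate J\'onsson successors of singulars cofinally. Concretely, I would run the chain construction relative to an $x\cup\{I\}$-J\'onsson model, where $I=F^{*}$ is by \cref{Weak Saturation}, \cref{Indecomposable}, and the corollary following it weakly $\theta$-saturated, $\theta$-indecomposable, and hence $\theta$-irregular for unboundedly many regular $\theta<\lambda$; \cref{Simultaneous Reflection} then makes $S(I)$ stationary and forces simultaneous reflection below $\lambda$. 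I expect the technical heart of the argument to be precisely the reconciliation of these facts with the counting requirement: arranging the bookkeeping so that cofinally many of these distinguished regular cardinals become fixed points $\delta_\theta=\theta$ with $|M_\theta|=\theta$, thereby certifying each as J\'onsson via \cref{Jonsson model}. Getting the indecomposability/irregularity of $I$ to actually deliver the equality $|M\cap\theta|=\theta$ at cofinally many regular $\theta$ — rather than merely at the bounded set of non-indecomposable cardinals isolated by the lemma preceding \cref{Indecomposable} — is the delicate step, and may require a new idea beyond the quoted lemmas.
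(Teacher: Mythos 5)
Your opening reduction is sound, and it is in fact the same observation the paper exploits: \cref{Jonsson model} is local, so a single model $M$ with $\kappa\in M$ regular, $|M\cap\kappa|=\kappa$ and $\kappa\not\subseteq M$ certifies $\kappa$ as J\'onsson. But everything after that reduction is where the theorem actually lives, and your proposal does not contain it, as you concede in your last sentence. The chain construction cannot \emph{create} regular $\kappa\in M$ with $\sup(M\cap\kappa)=\kappa$; it can only detect them. For a regular $\kappa$ to be a fixed point $\delta_\kappa=\kappa$ you need $M\cap\kappa$ unbounded in $\kappa$, which is the conclusion, not something bookkeeping can arrange; and since you cannot assume $\lambda$ is Mahlo (as you note, that would be circular), there may be a club of singular cardinals, so the club $C$ of fixed points gives nothing for free. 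There is also a secondary gap: the phrase ``using $G$ to keep $\lambda\not\subseteq M$'' has no content. The defining property of $G$ concerns ranges over sets in $[\lambda]^\lambda$ and does not provide a way to omit an ordinal from a union of length $\lambda$. The standard way to omit one --- taking Skolem hulls of initial segments of a weakly homogeneous set --- destroys the requirement $M_i\in M_{i+1}$, which is exactly what you needed to see that the $\delta_i$ lie in $M$; conversely, confining an internally approachable chain to a fixed J\'onsson model $N$ requires $M_i\in N$, which you have no way to ensure. Finally, the claim that the co-J\'onsson filter supplies ``an abundance of models with favorable internal structure'' misreads its role: the filter says nothing about which models exist, only that $M\cap\lambda$ is filter-large for every $x$-J\'onsson model $M$.

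The paper fills the gap with pcf theory rather than a model construction, arguing by contradiction: assume the regular J\'onsson cardinals below $\lambda$ are bounded. Let $J$ be the ideal generated by a co-J\'onsson ideal together with the bounded sets; $J$ is still co-J\'onsson, hence by \cref{Weak Saturation} and \cref{Indecomposable} it is weakly $\theta$-saturated and $\theta$-indecomposable for some regular $\theta<\lambda$, so \cref{trichotomy} applies to the sequence of constant functions $f_\alpha\equiv\alpha$ and yields an exact upper bound $f$ modulo $J$ with $\{\alpha : \cf(f(\alpha))<\kappa\}\in J$ for every regular $\kappa<\lambda$. One checks that $f(\alpha)$ is regular $J$-a.e.\ and that $\ran(f\upharpoonright A)$ is unbounded in $\lambda$ for every $J$-positive $A$. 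Now take an $x$-J\'onsson model $M$ with $f\in M$ and set $g(\alpha)=\sup(M\cap f(\alpha))$. If $g$ agrees with $f$ on a $J$-positive set, then --- and this is where the filter is genuinely used: $M\cap\lambda\in F$ lets one intersect that positive set with $M$, so that $\alpha\in M$ and hence $f(\alpha)\in M$ --- one obtains unboundedly many regular cardinals $f(\alpha)\in M$ with $|M\cap f(\alpha)|=f(\alpha)$. These are non-J\'onsson by the contradiction hypothesis, so your locality observation, run in the contrapositive, forces $f(\alpha)\subseteq M$ for each of them, whence $\lambda\subseteq M$, a contradiction. Otherwise $g<_J f$, and exactness of the upper bound bounds $\sup(M\cap f(\alpha))$ by a fixed $\alpha^*$ on a $J$-large set, contradicting the unboundedness of $M\cap\lambda$ and of $\ran(f)$ on positive sets. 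In short, the regular cardinals ``half inside $M$'' that your plan needs are produced by the exact upper bound, not by a filtration, and the proof-by-contradiction format is what makes the locality observation usable.
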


\begin{proof}
	We proceed by contradiction, so assume that the set of regular J'onsson cardinals below $\lambda$ is bounded in $\lambda$. Suppose that $I$ is a \changed{co-J\'onsson ideal on $\lambda$ relative to} some parameter $x$, and note that $I$ is compatible with the ideal of bounded subsets of $\lambda$ since $M\cap\lambda$ must be unbounded for any J\'onsson model $M$. Let $J$ be the ideal generated by the ideal of bounded sets from $I$. Note that $J$ is still a co-J'onsson ideal, and thus $\theta$-irregular for some $\theta<\lambda$. 
	
	For each $\alpha<\lambda$ we let $f_\alpha$ be the constant function at $\alpha$. Then, $\bar f=\langle f_\alpha : \alpha<\lambda\rangle$ is a $<$-increasing sequence of functions from $\lambda$ to the ordinals. Thus, \cref{trichotomy} gives us a $J$-eub $f$ of $\bar f$ such that
	
	\begin{equation*}
	\{ \changed{\alpha<\lambda}: \cf(f(\alpha))<\changed{\kappa}\}\in J
	\end{equation*}
	\\
	for every regular $\changed{\kappa}<\lambda$. Note that $f(\alpha)$ is regular for $J$ almost every $\alpha<\lambda$, otherwise the function $\alpha\mapsto \cf(f(\alpha))$ is below $f$ on a $J$-positive set. But then, there is some $\alpha^*<\lambda$ such that $\cf(f(\alpha))<\alpha^*$ on a $J$-positive set which is a contradiction. Next, note that the cofinality condition on $f$ forces that $\ran(f\upharpoonright A)$ us unbounded in $\lambda$ for each $A\in J^+$.
	
	Now let $M$ be an $x$-J\'onsson model with $f\in M$, and define a function $g:\lambda\to\lambda$ by $g(\alpha)=\sup(M\cap f(\alpha))$. First, we claim that $g<_J f$. Otherwise, there is a $J$\changed{-positive} set $A$ such that the following holds for each $\alpha\in A$:
	
	\begin{enumerate}
		\item $\sup(M\cap f(\alpha))=f(\alpha)$;
		\item $f(\alpha)$ is regular;
		\item $\alpha\in M$. 
	\end{enumerate}
	
	Further, $\ran(f\upharpoonright A)$ is unbounded and a subset of $M$, since $f\in M$. So we have an unbounded set of regular cardinals fo the form $f(\alpha)$ in $M$ which are not J\'onsson, but satisfy $|f(\alpha)\cap M|=f(\alpha)$. From this, it follows that each such $f(\alpha)\subseteq M$ and thus that $\lambda\subseteq M$ which is a contradiction. Thus, $g<_J f$, and so there is some $\alpha^*<\lambda$ such that $\sup(M\cap f(\alpha))<\alpha^*$ on a $J$-large set. This is a contradiction.
\end{proof}

\changed{We note that the above theorem did not require that the co-J\'onsson ideal comes from club guessing. Much like the case for a J\'onsson successor of a singular, we can prove a similar theorem from club guessing sequences directly.}

\begin{theorem}\label{small Jonsson}
\changed{Suppose that $\lambda$ is an inaccessible J\'onsson cardinal. If there is a co-J\'onsson ideal $I$ (perhaps depending on a parameter $x$) on $\lambda$ extending the non-stationary ideal, then the set of regular non-J\'onsson cardinals below $\lambda$ is in $I$.}
\end{theorem}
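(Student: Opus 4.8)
The plan is to argue by contradiction. Write $R$ for the set of regular non-J\'onsson cardinals below $\lambda$, assume $I$ is proper (otherwise $\lambda \in I$ and there is nothing to prove), and suppose toward a contradiction that $R \in I^+$. Fixing a parameter $x$ witnessing that $I$ is co-J\'onsson, I would pass to an $x$-J\'onsson model $M \prec (H(\chi),\in,<_\chi)$ for some regular $\chi > \lambda$; by definition of a co-J\'onsson ideal we then have $M \cap \lambda \in I^*$. The engine of the whole argument is \cref{Jonsson model}: if $\kappa$ is a regular cardinal with $\kappa \in M$ and $|M \cap \kappa| = \kappa$ but $\kappa$ is \emph{not} J\'onsson, then $M$ cannot also satisfy $\kappa \not\subseteq M$ (otherwise $M$ itself would witness, via \cref{Jonsson model}, that $\kappa$ is J\'onsson), and hence $\kappa \subseteq M$. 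This local fact is what we will leverage globally.

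The second ingredient is the observation that the map $\kappa \mapsto |M \cap \kappa|$ is continuous, non-decreasing, and bounded above by the identity, and is unbounded in $\lambda$ since $|M \cap \lambda| = \lambda$ with $\lambda$ regular forces $M \cap \lambda$ to be unbounded. Consequently its fixed-point set $C_M = \{\kappa < \lambda : |M \cap \kappa| = \kappa\}$ is a club in $\lambda$. This is precisely where the hypothesis $I \supseteq \NS_\lambda$ is used: it guarantees that $C_M \in I^*$ and, dually, that every $I$-positive set is stationary (hence unbounded). It is worth noting that \cref{Jonssons below} did not assume that the ideal extends $\NS_\lambda$, which is exactly why it had to route through exact upper bounds and the trichotomy theorem; the stronger hypothesis available here lets us bypass all of that machinery.

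Combining the pieces: since $M \cap \lambda \in I^*$ and $C_M \in I^*$, their intersection lies in $I^*$, and intersecting the $I$-positive set $R$ against this $I^*$-set yields an $I$-positive — hence stationary, hence unbounded — set $R \cap M \cap C_M$. Each $\kappa$ in this set is a regular non-J\'onsson cardinal with $\kappa \in M$ and $|M \cap \kappa| = \kappa$, so the engine of the first paragraph gives $\kappa \subseteq M$. As these $\kappa$ range unboundedly below $\lambda$, we obtain $\lambda \subseteq M$, contradicting the fact that $M$ is a J\'onsson model. This contradiction forces $R \in I$.

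I expect the only points requiring genuine care to be (i) the verification that $C_M$ is truly a club, i.e. the continuity of $\kappa \mapsto |M \cap \kappa|$ at limits together with the unboundedness coming from $|M \cap \lambda| = \lambda$, and (ii) keeping straight that the single model $M$ and cardinal $\chi$ simultaneously witness the co-J\'onssonness of $I$ and are usable in \cref{Jonsson model} for every $\kappa \in R$, which is unproblematic since $\chi > \lambda > \kappa$. The conceptual heart of the proof is simply the recognition that $I \supseteq \NS_\lambda$ upgrades the per-$\kappa$ statement ``non-J\'onsson regular cardinals in $M$ of full trace are swallowed by $M$'' into the global absorption $\lambda \subseteq M$, which a J\'onsson model cannot tolerate.
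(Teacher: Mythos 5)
Your proof is correct and takes essentially the same approach as the paper's: assume the set of regular non-J\'onsson cardinals is $I$-positive, intersect it with $M\cap\lambda\in I^*$ and a club lying in $I^*$ (this is where $I\supseteq\NS_\lambda$ enters), use \cref{Jonsson model} to absorb each resulting $\kappa$ into $M$, and conclude $\lambda\subseteq M$, a contradiction. The only cosmetic difference is your choice of club: the paper uses $E=\{\delta<\lambda : \delta=\sup(M\cap\delta)\}$, which gives $|M\cap\alpha|=\alpha$ at regular $\alpha\in E$, where you use the fixed-point set $C_M=\{\kappa : |M\cap\kappa|=\kappa\}$ (whose clubness needs only that $\kappa\mapsto|M\cap\kappa|$ is non-decreasing, bounded by the identity, and unbounded, not genuine continuity).
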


\begin{proof}
\changed{Suppose otherwise, i.e. that $A=\{\alpha<\lambda : \alpha\text{ is regular and not J\'onsson}\}$ is not in $I$. Let $M$ be a J'onsson model with $x\in M$, and let}

\begin{equation*}
\changed{E=\{\delta<\lambda : \delta=\sup(M\cap\delta)\}.}
\end{equation*}
\\
\changed{Since $I$ extends the non-stationary ideal, we know that $E\cap M\cap A$ is $I$-positive, and hence unbounded in $\lambda$. But for every $\alpha\in E\cap M\cap A$, we know that $\alpha$ is a regular non-J\'onsson cardinal in $M$ with $|M\cap\alpha|=\alpha$, and thus $\alpha\subseteq M$. Since $E\cap M\cap A$ is unbounded, it follows then that $\lambda\subseteq M$ which is a contradiction.}
\end{proof}

\section{The Mahlo Degree of an Inaccessible J\'onsson Cardinal}

At this point, we would like to make more clear the relationship between Mahlo rank and J\'onssonness. We first note that the techniques employed in \cref{Jonssons below} can be used to put further restrictions on which sorts of cardinals can carry J\'onsson ideals. We note that the proof given below borrows from Claim 3.3 of Chapter III from \cite{Sh}.

\begin{lemma}[Shelah]\label{Mahlo Jonsson}
	If $\lambda$ is inaccessible and J\'onsson, then $\lambda$ must be at least Mahlo.
\end{lemma}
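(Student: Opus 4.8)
The plan is to argue by contradiction: assume $\lambda$ is inaccessible and J\'onsson but not Mahlo, and produce a J\'onsson model that is in fact improper. Since $\lambda$ is not Mahlo, the inaccessibles below $\lambda$ are non-stationary, so I can fix a club $C_1\subseteq\lambda$ disjoint from them and set $C=C_1\cap\{\mu<\lambda:\mu\text{ a limit cardinal}\}$. This $C$ is a club all of whose members are \emph{singular} cardinals, and it is closed, since any limit cardinal accumulating $C$ lies in $C_1$ and hence avoids the inaccessibles. For each $\mu\in C$ fix a cofinal map $e_\mu\colon\cf(\mu)\to\mu$, and take a J\'onsson model $M$ (as in \cref{Jonsson model}) with $C,\langle e_\mu:\mu\in C\rangle,\lambda\in M$. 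Write $\gamma=\min(\lambda\setminus M)$ and $\xi_0=\beta_M(\gamma)=\min(M\setminus\gamma)$. Since $\gamma\notin M$ we have $\gamma<\xi_0$, and $\gamma$ is a limit ordinal (a predecessor would lie in $M$), so $\gamma=\sup(M\cap\gamma)$. By minimality $M$ meets no ordinal in $[\gamma,\xi_0)$, whence $M\cap\xi_0=[0,\gamma)$ is bounded below $\xi_0$.

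The key local computation is that $\xi_0$ is a \emph{regular} cardinal. Indeed $\cf(\xi_0)\in M$, since cofinality is definable from $\xi_0$; as $M$ meets nothing in $[\gamma,\xi_0)$ we must have $\cf(\xi_0)<\gamma$. Were $\xi_0$ singular, the $<_\chi$-least cofinal map $\cf(\xi_0)\to\xi_0$ would lie in $M$, and because $\cf(\xi_0)<\gamma\subseteq M$ its entire range would be a cofinal subset of $\xi_0$ inside $M$, contradicting that $M\cap\xi_0$ is bounded by $\gamma$. Hence $\xi_0$ is regular, so $\xi_0\notin C$. This reduces the whole proof to showing $\gamma\in C$: for then, since $C\in M$ is closed (hence $\le\beta_M(\gamma)$-closed) and $\gamma\in C$, \cref{club indiscenibility} forces $\xi_0=\beta_M(\gamma)\in C$, which is absurd because every member of $C$ is singular. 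The tool for feeding accumulation into this step is \cref{Reflection at beta delta}: applied with $\delta=\gamma$ (legitimate as $\gamma=\sup(M\cap\gamma)$ and $\gamma\ne\xi_0$), it shows that $\gamma$ is an accumulation point of every club $d\in M$ of $\xi_0$; taking $d=C\cap\xi_0$ whenever this is club in $\xi_0$ yields $\gamma\in\acc(C)\subseteq C$.

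The hard part is exactly securing that $\gamma$ lands \emph{on} $C$ rather than inside one of its gaps, and ruling out the degenerate case in which $\gamma$ is not a cardinal at all (there $\xi_0$ is only a successor $\nu^+$ and $M\cap\xi_0$ has size merely $\nu$, so no immediate contradiction arises). This is the technical heart of the argument and the analogue here of Shelah's Claim~3.3. My intended resolution is to exploit the ladder-swallowing of \cref{Swallowing Ladders} together with the fact that $C$ does not reflect at inaccessibles: because $C\in M$ is a club of cardinals and $M\cap\lambda$ is cofinal, the closure points of $M$ push $C\cap\gamma$ up to be cofinal in $\gamma$ (so that $\gamma\in C$) unless $M$ straddles $\gamma$ by a genuine gap, and one shows such a gap is incompatible with $M$ having absorbed the maps $e_\mu$ on its cofinal $C$-points. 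Once $\gamma\in C$ is in hand the contradiction closes as above, and this is the single place where the full strength of J\'onssonness — beyond the stationary reflection of \cref{Tryba-Woodin} — is genuinely needed.
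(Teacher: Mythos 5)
Your local analysis of the first gap is correct as far as it goes: $\gamma=\min(\lambda\setminus M)$ is a limit ordinal with $M\cap\xi_0=[0,\gamma)$, and your argument that $\xi_0=\beta_M(\gamma)$ is regular is sound. But the step you defer as the ``technical heart'' --- showing $\gamma\in C$ --- is not merely unproven; it is provably \emph{false}, by the same kind of computation you have already carried out. Since $\xi_0$ is regular and $C$ is a \emph{closed} set of \emph{singular} cardinals, $C\cap\xi_0$ cannot be unbounded in $\xi_0$ (closure of $C$ would force $\xi_0\in C$), so $\sup(C\cap\xi_0)<\xi_0$; this supremum is definable from $C,\xi_0\in M$, hence lies in $M\cap\xi_0=\gamma$. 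Thus $C\cap\gamma$ is bounded below $\gamma$, so $\gamma\notin\acc(C)$; and $\gamma\in\nacc(C)$ is impossible as well, since then $\gamma=\min\bigl(C\setminus(\sup(C\cap\gamma)+1)\bigr)$ would be definable from parameters in $M$, giving $\gamma\in M$. Hence $\gamma\notin C$, no matter what parameters (the maps $e_\mu$ included) you place inside $M$. For the same reason your proposed tool is vacuous: $C\cap\xi_0$ is never club in $\xi_0$, so the intended application of \cref{Reflection at beta delta} with $d=C\cap\xi_0$ can never fire.

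The structural reason the plan cannot be patched is that nothing in it uses $|M\cap\lambda|=\lambda$: everything is a local analysis of the least gap of an arbitrary $M\prec H(\chi)$ with $\lambda\in M$ and $\lambda\not\subseteq M$, and a countable elementary submodel exhibits exactly the same first-gap configuration ($\xi_0$ regular, every $M$-definable closed set of singulars bounded below $\gamma$); such models exist outright in ZFC, so no contradiction can be extracted from this configuration alone. J\'onssonness must be harvested at stationarily many gap points simultaneously, which is what the paper does: non-Mahloness gives a stationary set $S$ of singulars not reflecting at inaccessibles; club guessing on $S$ together with \cref{Swallowing Ladders} shows that $\lambda\setminus(M\cap\lambda)$ lies in the club-guessing ideal, yielding a co-J\'onsson ideal $I$ extending $\NS_\lambda$ (\cref{idpci is Jonsson}) --- note that there the relevant points $\delta$ lie \emph{in} $S$, which is exactly what makes $\beta_M(\delta)$ singular, in contrast to your first gap, where $\beta_M(\gamma)$ is provably regular --- and then a pcf argument (\cref{trichotomy} applied to the constant functions) produces an $I$-exact upper bound whose range is a stationary set of inaccessibles, contradicting non-Mahloness. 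If you want to keep an elementary-submodel flavour, you must work with the club $E=\{\delta<\lambda:\delta=\sup(M\cap\delta)\}$ and stationarily many $\delta\in S\cap E\setminus M$, not with the single point $\min(\lambda\setminus M)$.
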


\begin{proof}
	Assume otherwise, and let $\lambda$ be an inaccessible J\'onsson cardinal which is not Mahlo. Then, by the comments proceeding \cref{idpci is Jonsson}, we can find a J\'onsson ideal $I$ on $\lambda$ which extends the non-stationary ideal. As in the proof of \cref{Jonssons below}, we let $f_\alpha$ be the constant function at $\alpha$ for each $\alpha<\lambda$, and let $\bar f=\langle f_\alpha : \alpha<\lambda\rangle$. We can then find a \changed{I}-eub $f$ for $\bar f$ such that 
	
	\begin{equation*}
	\{\changed{\alpha<\lambda}: \cf(f(\alpha))<\changed{\kappa}\}\in \changed{I}
	\end{equation*}
	\\
	for every $\changed{\kappa}\lambda$ regular. It suffices to show that $f(\alpha)$ is inaccessible for $I$-almost every $\alpha<\lambda$ and that $\ran(f)$ is stationary. We already know that $f(\alpha)$ is a regular cardinal for $I$-almost every $\alpha<\lambda$ by the argument in \cref{Jonssons below}. 
	
	Suppose, for the sake of contradiction, that the set 
	
	\begin{equation*}
	X=\{\alpha<\lambda : f(\alpha)\text{ is a successor cardinal}\}.
	\end{equation*}
	\\
	is $I$-positive. Define a function $g:X\to\mathrm{ON}$ by setting $g(\alpha)$ equal to the cardinal predecessor of $f(\alpha)$. Since $g<_{I\upharpoonright X} f\upharpoonright X$, it follows that there is some $\alpha^*<\lambda$ such that $g(\alpha)<\alpha^*$ for almost every $\alpha\in X$. This is, of course, a contradiction since that would mean $f(\alpha)$ is bounded in an $I$-positive set.
	
	To see that $\ran(f)$ is stationary, suppose otherwise and let $E\subseteq\lambda$ be club with $E\cap \ran(f)=\emptyset$. Define a function $g:E\to \lambda$ by setting $g(\alpha)=\max(f(\alpha)\cap E)$, and noting of course that $g<_I f$ since $I$ extends the non-stationary ideal. Now, for each $i<\lambda$, let $i_E=\min(E\setminus (i+1))$, and note that if $f(\alpha)>i_E$, then $g(\alpha)>i$. This, however, tells us that $g$ must be unbounded, which contradicts the fact that $f$ is an eub for $\bar f$.
\end{proof}

\changed{Note that when we applied \cref{Swallowing Ladders} in the proof of \cref{idpci is Jonsson}, we first showed that $\beta_M(\delta)$ was often singular for $\delta\in S\setminus M$. This allowed us to show that $M\cap\lambda$ swallowed significant portions of the ladder system $\bar C$. Our goal now is to show something similar happens in the case that $\lambda$ is inaccessible and J\'onsson, provided $\mathrm{Mahlo}(\lambda)<\lambda\times\omega$. For the remainder of this section, we will assume that $\Tr_\gamma(S)$ has been defined using the clubs}

\begin{equation*}
e_\gamma=\begin{cases}
\{\lambda\times n + \alpha : \alpha<\beta\} & \text{if $\beta>0$} \\
\{\lambda\times (n-1) + \alpha : \alpha<\lambda\} & \text{if $\beta=0$}
\end{cases}
\end{equation*} 
\\
\changed{for $\gamma<\lambda\times\omega$ limit.}

\begin{lemma}
	Let $\lambda$ be an inaccessible J\'onsson cardinal, and let $M$ be a J\'onsson model for $\lambda$. Suppose \changed{$\gamma=\lambda\times n + \beta$ for some $n<\omega$ and $\beta<\lambda$, and set}
	
	\begin{equation*}
	A_\gamma=\{\alpha <\lambda :\changed{\mathrm{Mahlo}(\beta_M(\alpha))\geq \beta_M(\alpha)\times n + \beta}\}.
	\end{equation*}
	\\
	Then, $\Tr(A_\gamma)\subseteq_{NS}A_{\gamma+1}$.
\end{lemma}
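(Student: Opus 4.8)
The plan is to translate membership in $A_{\gamma+1}$ into a stationarity statement and then verify that statement by a club‑indiscernibility argument. Writing $\nu=\beta_M(\delta)$, the condition $\delta\in A_{\gamma+1}$ says exactly $\Mahlo(\nu)\geq\nu\times n+\beta+1$, which by the Mahlo reformulation of \cref{rank below} holds precisely when $B:=\{\xi<\nu:\Mahlo(\xi)\geq\xi\times n+\beta\}$ is stationary in $\nu$. So, after discarding a nonstationary set, I would fix $\delta\in\Tr(A_\gamma)$ lying in the club $E=\{\eta<\lambda:\eta=\sup(M\cap\eta)\}$, having uncountable cofinality, and (just as in the proof of \cref{idpci is Jonsson}) a cardinal, and aim to show $B$ is stationary in $\nu$. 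I assume, as is harmless in the intended application, that $\gamma\in M$, so that $B\in M$.

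The heart of the matter is the claim $(\star)$: no club $d\subseteq\nu$ with $d\in M$ is disjoint from $B$. Granting $(\star)$, elementarity (using $B,\nu\in M$) gives that $M$, and hence $V$, sees $B$ as stationary in $\nu$. To prove $(\star)$, suppose such a $d$ existed. When $\delta\notin M$ we have $\delta\neq\beta_M(\delta)=\nu$, so \cref{Reflection at beta delta} forces $\delta\in\acc(d)$ and $d\cap\delta$ is club in $\delta$; when $\delta\in M$ we have $\nu=\delta$ and $d\cap\delta=d$ is already club. Since $A_\gamma\cap\delta$ is stationary, I can choose $\alpha\in A_\gamma\cap d$ with $\alpha<\delta$. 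For such $\alpha$ I claim $\beta_M(\alpha)\in d$: because $\delta=\sup(M\cap\delta)$ we get $\beta_M(\alpha)<\delta\leq\nu$, and setting $s=\sup(d\cap\beta_M(\alpha))$ (which lies in $M$ since $d,\beta_M(\alpha)\in M$), minimality of $\beta_M(\alpha)=\min(M\setminus\alpha)$ rules out $\alpha\leq s<\beta_M(\alpha)$, forcing $s=\beta_M(\alpha)$; closedness of $d$ in $\nu$ then places $\beta_M(\alpha)\in d$. But $\alpha\in A_\gamma$ means $\beta_M(\alpha)\in B$, contradicting $d\cap B=\emptyset$.

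It remains to see that $\nu$ is genuinely inaccessible, so that the Mahlo reformulation actually applies. Here I would use that $B$ consists of inaccessibles: by $(\star)$ and elementarity the inaccessibles are stationary in $\nu$, so $\nu$ cannot be singular, since a singular limit cardinal admits (inside $M$, by elementarity) a continuous cofinal sequence of cardinals whose limit points are singular, i.e.\ a club avoiding all inaccessibles, contradicting $(\star)$. As $\nu$ is a limit cardinal (the argument of \cref{idpci is Jonsson}) and now regular, it is inaccessible, and $B$ stationary in $\nu$ yields $\Mahlo(\nu)\geq\nu\times n+\beta+1$, i.e.\ $\delta\in A_{\gamma+1}$.

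I expect the cofinality bookkeeping around $(\star)$ to be the main obstacle. In particular, I would not simply quote \cref{club indiscenibility} to obtain $\beta_M(\alpha)\in d$, since its hypothesis that $d$ be $\leq\beta_M(\alpha)$‑closed can fail when $\nu$ has small cofinality; the point to get right is that mere closedness of $d$ in $\nu$ together with $\beta_M(\alpha)<\nu$ already suffices, which is exactly what the computation with $s=\sup(d\cap\beta_M(\alpha))$ delivers. Keeping the two cases $\delta\in M$ and $\delta\notin M$ aligned, and confirming that all the reductions ($\delta\in E$, $\delta$ a cardinal, $\cf(\delta)>\omega$) are club conditions so that only a nonstationary set is lost, are the remaining routine points.
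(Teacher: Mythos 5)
Your core argument is the same as the paper's: fix $\delta\in\Tr(A_\gamma)$ in the club $E$ of $M$-limits, take a club of $\nu=\beta_M(\delta)$ lying in $M$, pull it down to $\delta$ via \cref{Reflection at beta delta}, pick $\alpha\in A_\gamma$ inside it, and push back up to $\beta_M(\alpha)$ by the $\sup$-computation underlying \cref{club indiscenibility}. Your two refinements are both legitimate: the $\leq\beta_M(\alpha)$-closedness hypothesis of \cref{club indiscenibility} can indeed fail here and your replacement computation is correct, and the paper does gloss over the verification that $\nu$ is inaccessible before the corollary following \cref{rank below} can be applied.

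However, the assumption ``$\gamma\in M$, so that $B\in M$'' is a genuine gap, and your claim that it is harmless in the intended application is false. You use $B\in M$ only at the endgame, to convert $(\star)$ (every club of $\nu$ lying in $M$ meets $B$) into genuine stationarity of $B$ by elementarity; without $B\in M$ that inference is unavailable. But this lemma is consumed as the successor step of an induction (in the iterated lemma proved right after it, and through that in \cref{AM}) running over every $\gamma<\lambda\times\omega$ up to $\Mahlo(\lambda)$; since $M$ is a J\'onsson model, $\lambda\not\subseteq M$, so that induction must pass through ordinals $\beta\notin M$ and cannot skip them --- your restricted version breaks the chain at the first such $\beta$. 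The repair is to run the endgame contrapositively, which is exactly what the paper does: suppose $\delta\notin A_{\gamma+1}$. Since $\nu\in M$ and the Mahlo-degree function is definable, $\Mahlo(\nu)\in M$, and writing $\Mahlo(\nu)=\nu\times n'+\beta'$ (or $\Mahlo(\nu)=-1$) the parameters $n',\beta'$ are definable from $\nu$ and $\Mahlo(\nu)$, hence lie in $M$ regardless of whether $\beta$ does. Elementarity together with the corollary following \cref{rank below} (or, if $\nu$ is not inaccessible, a club of non-inaccessibles definable from $\nu$) then produces a club $C\in M$ of $\nu$ with $\Mahlo(\xi)<\xi\times n'+\beta'\leq\xi\times n+\beta$ for every $\xi\in C$ above $\beta'$; note $\beta'<\delta$, since $M\cap[\delta,\nu)=\emptyset$ by minimality of $\beta_M(\delta)$. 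Your own computation --- choose $\alpha\in A_\gamma\cap C\cap\delta$ above $\beta'$ and conclude $\beta_M(\alpha)\in C$ --- now contradicts $\Mahlo(\beta_M(\alpha))\geq\beta_M(\alpha)\times n+\beta$, and this also absorbs your separate argument that $\nu$ is inaccessible. In short, the witnessing club must be extracted from the decomposition of $\Mahlo(\nu)$, which $M$ can see, rather than from $\gamma$, which it may not.
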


\begin{proof}
	\changed{Suppose $\lambda$ is an inaccessible J\'onsson cardinal, $\gamma=\lambda\times n +\beta$ is as above, and $M$ is a J\'onsson model for $\lambda$. Note that we may assume that $\Tr_\gamma(A)$ is stationary, otherwise we have noting to prove. With that in mind, let}
	
	\begin{equation*}
	\changed{E=\{\delta<\lambda : \delta=(\sup M\cap \delta)\},}
	\end{equation*}
	
	and fix $\delta\in E$ such that $\delta\cap A_\gamma$ is stationary in $\delta$. We need to show that $\beta_M(\delta)$ is at least $\beta_M(\alpha)\times n + \beta+1$-Mahlo. \changed{Note that if $\beta_M(\delta)=\delta$, then we are done by the definition of $A_\gamma$ and Corollary 3.1}. Suppose otherwise, and let $C\in M$ be club in $\beta_M(\delta)$ such that every $\Mahlo(\alpha)<\alpha\times n +\beta$. \changed{Note that such a club exists since $\beta_M(\delta)\in M$, which tells us that $\Mahlo(\beta_M(\delta))\in M$ as well}. \changed{By \cref{Reflection at beta delta}, we know that} $\delta\in \acc(C)$, i.e. that $\delta\cap C$ is club in $\delta$. \changed{As $\delta\cap A_\gamma$ is stationary, we can find} $\alpha\in \delta\cap A_\gamma\cap C$. \changed{It then follows from \cref{club indiscenibility} that} $\beta_M(\alpha)\in C$, which is a contradiction.
\end{proof}

\begin{lemma}
	Suppose that $\lambda$ is an  inaccessible J\'onsson cardinal such that $\Mahlo(\lambda)<\lambda\times\omega$. Let $M$ be a J\'onsson model for $\lambda$ and, for $\gamma<\lambda\times\omega$, set
	
	\begin{equation*}
	A_\gamma=\{\alpha <\lambda :\changed{\mathrm{Mahlo}(\beta_M(\alpha))\geq \beta_M(\alpha)\times n + \beta}\}.
	\end{equation*}
	\\
	Then, $\Tr_\gamma(A_0)\subseteq_{NS}A_{\gamma}$.
\end{lemma}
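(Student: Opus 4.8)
The plan is to induct on $\gamma<\lambda\times\omega$, writing $\gamma=\lambda\times n+\beta$ with $n<\omega$ and $\beta<\lambda$. I will use throughout that $\Tr$ is monotone modulo $\NS_\lambda$: if $X\subseteq_{NS}Y$ and $C$ is a club avoiding the nonstationary part of $X\setminus Y$, then at every $\delta\in\acc(C)$ the set $C\cap\delta$ is club in $\delta$, so $X\cap\delta$ stationary forces $Y\cap\delta$ stationary, giving $\Tr(X)\subseteq_{NS}\Tr(Y)$. The base case $\gamma=0$ is immediate since $\Tr_0(A_0)=A_0$. For the successor step, $\Tr_{\gamma+1}(A_0)=\Tr(\Tr_\gamma(A_0))\subseteq_{NS}\Tr(A_\gamma)\subseteq_{NS}A_{\gamma+1}$, where the first containment is monotonicity applied to the inductive hypothesis and the second is exactly the preceding lemma.

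For a limit $\gamma$ I split according to the convention on $e_\gamma$ fixed for this section. If $\beta$ is a nonzero limit, then $\Tr_\gamma(A_0)=\bigcap_{\alpha<\beta}\Tr_{\lambda\times n+\alpha}(A_0)$. Intersecting the fewer-than-$\lambda$, hence jointly club, witnessing clubs from the inductive hypothesis places $\Tr_\gamma(A_0)$, modulo $\NS_\lambda$, inside $\bigcap_{\alpha<\beta}A_{\lambda\times n+\alpha}$. Since ordinal addition is continuous in its second argument, $\beta_M(\xi)\times n+\beta=\sup_{\alpha<\beta}(\beta_M(\xi)\times n+\alpha)$ for every $\xi$, so $\Mahlo(\beta_M(\xi))$ dominates all of the $\beta_M(\xi)\times n+\alpha$ precisely when it dominates $\beta_M(\xi)\times n+\beta$. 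Thus $\bigcap_{\alpha<\beta}A_{\lambda\times n+\alpha}=A_\gamma$ exactly, and this subcase is finished with no further work.

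The remaining, genuinely delicate, case is $\beta=0$ with $n>0$, where $\Tr_{\lambda\times n}(A_0)=\bigtriangleup_{\alpha<\lambda}\Tr_{\lambda\times(n-1)+\alpha}(A_0)$. Fix $\delta$ lying in the club $E=\{\delta<\lambda:\delta=\sup(M\cap\delta)\}$ and in a suitable diagonal intersection of the clubs supplied below, and assume $\delta\in\Tr_{\lambda\times n}(A_0)$ is a limit of uncountable cofinality. For each $\alpha<\delta$ we then have $\delta\in\Tr_{\lambda\times(n-1)+\alpha+1}(A_0)$, i.e. $\Tr_{\lambda\times(n-1)+\alpha}(A_0)\cap\delta$ is stationary; the inductive hypothesis (evaluated at accumulation points of its witnessing club) makes $A_{\lambda\times(n-1)+\alpha}\cap\delta$ stationary, so $\delta\in\Tr(A_{\lambda\times(n-1)+\alpha})$, and the preceding lemma yields $\delta\in A_{\lambda\times(n-1)+\alpha+1}$, that is $\Mahlo(\beta_M(\delta))\geq\beta_M(\delta)\times(n-1)+\alpha+1$. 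Taking the supremum over $\alpha<\delta$ gives only
\begin{equation*}
\Mahlo(\beta_M(\delta))\geq\beta_M(\delta)\times(n-1)+\delta,
\end{equation*}
so the main obstacle is to upgrade the trailing $\delta$ to the full $\beta_M(\delta)$.

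This upgrade is exactly the indiscernibility of $\delta$ and $\beta_M(\delta)$ underlying \cref{Reflection at beta delta} and \cref{club indiscenibility}, and here it reduces to a clean elementarity argument. Set $\mu=\beta_M(\delta)=\min(M\cap\lambda\setminus\delta)$, so that $M\cap\lambda\cap[\delta,\mu)=\emptyset$. Suppose toward a contradiction that $\Mahlo(\mu)<\mu\times n$. Since $\Mahlo(\mu)\geq\mu\times(n-1)$, ordinal division gives $\Mahlo(\mu)=\mu\times(n-1)+r$ with $r<\mu$, and the displayed inequality forces $r\geq\delta$. But $\mu\in M$ implies $\Mahlo(\mu)\in M$, the Mahlo degree being a definable function of its argument (independent of $\bar e$ modulo clubs), whence $r\in M\cap\lambda$; then $r\in M\cap\lambda\cap[\delta,\mu)$, contradicting the minimality of $\mu=\beta_M(\delta)$. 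Therefore $\Mahlo(\beta_M(\delta))\geq\beta_M(\delta)\times n$, i.e. $\delta\in A_{\lambda\times n}$, which completes the induction. I expect this final minimality step to be the crux, precisely because the diagonal intersection only delivers reflection up to the trailing ordinal $\delta$, and the passage from $\delta$ to $\beta_M(\delta)$ cannot be seen by the trace operator alone but only through the fact that no point of $M$ separates them.
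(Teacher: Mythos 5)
Your proof is correct and follows essentially the same route as the paper's: induction on $\gamma$ using the preceding lemma at successors, intersection of witnessing clubs at limits $\lambda\times n+\beta$ with $\beta>0$, and a diagonal intersection at $\gamma=\lambda\times n$. Your final step—deriving a contradiction with the minimality of $\beta_M(\delta)$ from the definability of the remainder $r$ of $\Mahlo(\beta_M(\delta))$ in $M$—is just a contrapositive reorganization of the paper's case split ($\beta^*=\delta$ or $\beta^*=\beta_M(\delta)$), so the two arguments coincide in substance.
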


\begin{proof}
	\changed{Suppose $\lambda$ is an inaccessible J\'onsson cardinal, and $M$ is a J\'onsson model for $\lambda$.} Note that the previous lemma already gives us that $\Tr(A_0)\subseteq_{NS}A_1$, so suppose that we know $\Tr_\gamma(A_0)\subseteq_{NS}A_\gamma$ for some $\gamma<\lambda^+$. Applying the previous lemma again gives us that
	
	\begin{equation*}
	\Tr_{\gamma+1}\changed{(A_0)}=\Tr(\Tr_\gamma(A_0))\subseteq_{NS}\Tr(A_\gamma)\subseteq_{NS}A_{\gamma+1}.
	\end{equation*}
	\\
	
	At limit stages \changed{$\gamma=\lambda\times n +\beta$, we have two cases: either $\beta$ is a non-zero limit ordinal, or $\beta=0$ and $n>0$.} Assume \changed{first that $\beta$ is a non-zero limit ordinal, and that $\Tr_{\lambda\times n +\alpha}(A)\subseteq_{NS}A_{\lambda\times n +\alpha}$ for every $\alpha<\beta$. For each such $\alpha$, let $E_{\alpha}$ be club in $\lambda$ that witnesses this, and set $E=\bigcap_{\alpha<\beta}E_\alpha$. Suppose now that} 
		
	\begin{equation*}
	\changed{\delta\in \Tr_\gamma(A_0)\cap E=\bigcap_{\alpha<\beta}\Tr_{\lambda\times n+\alpha}(A_0)\cap\bigcap_{\alpha<\beta}E_\alpha.}
	\end{equation*}
	\\
	\changed{In that case, $\delta\in A_{\lambda\times n +\alpha}$ for all $\alpha<\beta$, i.e. $\Mahlo(\beta_M(\delta))\geq \beta_M(\delta)\times n + \alpha$ for each such $\alpha$. But then, $\Mahlo(\beta_M(\delta))\geq \beta_M(\delta)\times n +\beta$.}
	
	\changed{Assume now that $\beta=0$ and $n>0$, and that $\Tr_{\lambda\times (n-1) +\alpha}(A)\subseteq_{NS}A_{\lambda\times (n-1) +\alpha}$ for every $\alpha<\lambda$. For each such $\alpha$, let $E_{\alpha}$ be club in $\lambda$ that witnesses this, and set $E=\bigtriangleup_{\alpha<\lambda}E_\alpha$. Suppose now that} 
	
	\begin{equation*}
	\changed{\delta\in \Tr_\gamma(A_0)\cap E=\bigtriangleup_{\alpha<\lambda}\Tr_{\lambda\times (n-1)+\alpha}(A_0)\cap\bigtriangleup_{\alpha<\lambda}E_\alpha.}
	\end{equation*}
	\\
	\changed{In that case, $\delta\in A_{\lambda\times (n-1) +\alpha}$ for all $\alpha<\delta$, i.e. $\Mahlo(\beta_M(\delta))\geq \beta_M(\delta)\times (n-1) + \alpha$ for each such $\alpha$. But then, $\Mahlo(\beta_M(\delta))\geq \beta_M(\delta)\times (n-1) +\delta$. If $\Mahlo(\beta_M(\delta))>\beta_M(\delta)\times n$, then we are done. Otherwise, we know that }
	
	\begin{equation*}
	\changed{\beta_M(\delta)\times (n-1) +\delta\geq\Mahlo(\beta_M(\delta))=\beta_M(\delta)\times (n-1) +\beta*\leq\beta_M(\delta)\times n.}
	\end{equation*}
	\\
	\changed{For $\beta^*\leq \beta_M(\delta)$. Since $\beta^*$ is definable in $M$, it follows that either $\beta^*=\delta$ and thus that $\beta_M(\delta)=\delta$, or $\beta^*=\beta_M(\delta)$. In either case, we get that  $\Mahlo(\beta_M(\delta))=\beta_M(\delta)\times n$, i.e. that $\delta\in A_\gamma$.}
\end{proof}

\begin{corollary}\label{AM}
	Suppose that $\lambda$ is an  inaccessible J\'onsson cardinal such that $\Mahlo(\lambda)<\lambda\times\omega$. Let $M$ be a J\'onsson model for $\lambda$, and set
	
	\begin{equation*}
	A=\{\alpha < \lambda : \beta_M(\alpha)\text{ is inaccessible}\}.
	\end{equation*}
	\\
	The rank of $A$ is at most the Mahlo-degree of $\lambda$.
\end{corollary}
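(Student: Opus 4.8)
The plan is to recognize $A$ as the base case $A_0$ of the family $\langle A_\gamma\rangle$ from the two preceding lemmas and then combine those lemmas with non-stationarity at the top level. Write $\mu=\mathrm{Mahlo}(\lambda)=\lambda\times n+\beta$; this is legitimate since $\mu<\lambda\times\omega$ by hypothesis, so $n<\omega$ and $\beta<\lambda$. I would first observe that $A$ is exactly $A_0$, because $\mathrm{Mahlo}(\beta_M(\alpha))\geq\beta_M(\alpha)\times 0+0$ says precisely that $\beta_M(\alpha)$ is inaccessible. The preceding lemma (the one yielding $\Tr_\gamma(A_0)\subseteq_{NS}A_\gamma$) then gives $\Tr_\mu(A)=\Tr_\mu(A_0)\subseteq_{NS}A_\mu$. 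Consequently, to conclude $\rk_\lambda(A)\leq\mu=\mathrm{Mahlo}(\lambda)$ it suffices to show that $A_\mu$ is non-stationary, for then $\Tr_\mu(A)$ is non-stationary and the rank is at most $\mu$ by definition.

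The bulk of the work is to show $A_\mu$ is non-stationary. I would rewrite it as a preimage,
\[
A_\mu=\{\alpha<\lambda : \beta_M(\alpha)\in B\},\qquad B:=\{\delta<\lambda : \mathrm{Mahlo}(\delta)\geq\delta\times n+\beta\}.
\]
Since $\mathrm{Mahlo}(\lambda)=\lambda\times n+\beta\leq\lambda\times n+\beta$, the corollary to \cref{rank below} (which equates $\mathrm{Mahlo}(\lambda)\leq\lambda\times n+\beta$ with the non-stationarity of $\{\delta<\lambda:\mathrm{Mahlo}(\delta)\geq\delta\times n+\beta\}$) tells us that $B$ is non-stationary. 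Because the sequence $\bar e$ used to define the trace below $\lambda\times\omega$ has been fixed with a form definable from $\lambda$, the Mahlo function and hence $B$ are definable from parameters in $M$ (note $\lambda\in M$, and $n,\beta$ are recovered from $\mathrm{Mahlo}(\lambda)\in M$), so $B\in M$. Applying elementarity to the true statement that $B$ is non-stationary, I would fix a club $D\in M$ with $D\cap B=\emptyset$.

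The main obstacle, and the heart of the argument, is transferring the non-stationarity of $B$ through the map $\beta_M$; this is exactly where the club-indiscernibility of $M$ enters. Let $E=\{\delta<\lambda : \delta=\sup(M\cap\delta)\}$, which is club, and I claim that $E\cap D$ is disjoint from $A_\mu$. Fix $\delta\in E\cap D$. If $\delta\in M$ then $\beta_M(\delta)=\delta\in D$, so $\beta_M(\delta)\notin B$. If $\delta\notin M$ then $\delta<\beta_M(\delta)$, and since $D\in M$ is a club (hence $\leq\beta_M(\delta)$-closed) containing $\delta$, \cref{club indiscenibility} forces $\beta_M(\delta)\in D$, so again $\beta_M(\delta)\notin B$. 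In either case $\delta\notin A_\mu$, so $A_\mu$ is disjoint from the club $E\cap D$ and is therefore non-stationary. Together with the reduction of the first paragraph, this yields $\rk_\lambda(A)\leq\mathrm{Mahlo}(\lambda)$, as desired.
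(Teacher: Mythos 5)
Your proof is correct and is essentially the paper's own argument: both rest on the preceding lemma $\Tr_\gamma(A_0)\subseteq_{NS}A_\gamma$ taken at $\gamma=\Mahlo(\lambda)=\lambda\times n+\beta$, on the corollary to \cref{rank below} plus elementarity to obtain a club in $M$ disjoint from $\{\delta<\lambda:\Mahlo(\delta)\geq\delta\times n+\beta\}$, and on \cref{club indiscenibility} to push $\beta_M(\delta)$ into that club. The only difference is presentational: the paper runs the argument by contradiction (assuming $\rk_\lambda(A)>\Mahlo(\lambda)$ and picking a point of $A_\gamma$ inside the club), whereas you directly show $A_\mu$ is non-stationary, and your version is if anything slightly more careful in checking that the relevant set $B$ is definable in $M$ and in splitting the cases $\delta\in M$ and $\delta\notin M$.
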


\begin{proof}
	\changed{Assume} otherwise, and let $C\in M$ be club in $\lambda$, witnessing that the Mahlo degree of $\lambda$ is below $\gamma=\mathrm{rk}(A)$. \changed{Let}
	
	\begin{equation*}
	\changed{A_\gamma=\{\alpha <\lambda :\changed{\mathrm{Mahlo}(\beta_M(\alpha))\geq \beta_M(\alpha)\times n + \beta}\}.}
	\end{equation*}
	\\
	\changed{From the previous lemma}, we know that $\Tr_\gamma(A)\subseteq_{NS}A_\gamma$, and so it follows that $A_\gamma$ is stationary. Let $\alpha\in A_\gamma\cap C$, and note then that $\beta_M(\alpha)\in C$ by Lemma 2.3, which is a contradiction \changed{by the definition of $C$}.\\
	
\end{proof}

At this point, we are ready to give a new proof of the fact that any inaccessible J\'onsson cardinal $\lambda$ must be at least $\lambda\times\omega$-Mahlo. We will need to following club guessing result from \cite{Sh413}.

\begin{theorem}[Claim 0.14 of \cite{Sh413}]\label{Club Guessing}
	Assume that 
	\begin{enumerate}
		\item $\lambda$ is inaccessible, and $\theta<\lambda$ is regular;
		\item $A\subseteq\lambda$ is a stationary set of limit ordinals not reflecting outside of itself;
		\item $J$ is a $\theta$-indecomposable ideal on $\lambda$ extending the non-stationary ideal;
		\item $S\in J^+$, $S\cap A=\emptyset$, and $S\cap\mathrm{cof}(\theta)=\emptyset$.\\
	\end{enumerate}
	
	Then we can find an $S$-club system $\bar C=\langle C_\delta : \delta\in S\rangle$ such that, for every club $E\subseteq \lambda$:
	
	\begin{equation*}
	\{\delta\in S : \delta=\sup(E\cap\nacc(C_\delta)\cap A)\}\in J^+
	\end{equation*}
\end{theorem}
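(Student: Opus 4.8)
The plan is to produce one $S$-club system and check the guessing property for every club simultaneously, so that no separate correction modulo $J$ is needed. Throughout, the reductions use only that $J$ extends $\NS_\lambda$ and that $S\in J^+$.

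First I would reduce to a convenient ladder. The set $L=\{\delta<\lambda : A\cap\delta \text{ is cofinal in }\delta\}$ is club in $\lambda$ (since $A$ is stationary, hence unbounded, and $\lambda$ is regular), so $S\cap L=_J S$ and we may assume $A\cap\delta$ is cofinal in every $\delta\in S$. Because $S\cap A=\emptyset$ and $A$ does not reflect outside itself, $A$ is non-stationary in each $\delta\in S$; when $\cf(\delta)>\omega$ this yields a club $c_\delta\subseteq\delta$ with $c_\delta\cap A=\emptyset$. Using cofinality of $A\cap\delta$, for each $\delta\in S$ I would take $C^0_\delta$ to be the closure in $\delta$ of a strictly increasing cofinal sequence chosen from $A\cap\delta$; then $\nacc(C^0_\delta)\subseteq A$ is cofinal in $\delta$, so $\nacc(C^0_\delta)\cap A=\nacc(C^0_\delta)$ and the target set becomes $\{\delta\in S:\sup(E\cap\nacc(C^0_\delta))=\delta\}$.

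Next I would argue by contradiction via a club-guessing minimization adapted to $J$ and to the constraint that non-accumulation points remain in $A$. Assuming every ladder we build has a bad club, I would define recursively a $\subseteq$-decreasing sequence $\langle E_\zeta:\zeta<\theta\rangle$ of clubs with $E_0=\lambda$, taking intersections at limits, where $E_{\zeta+1}\subseteq E_\zeta$ witnesses that the ladder derived from $E_\zeta$ fails to guess modulo $J$. On the set where $\nacc(C^0_\delta)\cap E_\zeta$ is still cofinal in $\delta$ the derived ladder is the closure of $\nacc(C^0_\delta)\cap E_\zeta$, whose non-accumulation points again lie in $A$; elsewhere the old club is retained. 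For each $\delta\in S$ the ordinals $\gamma_\zeta(\delta)=\sup(\nacc(C^0_\delta)\cap E_\zeta)$ form a non-increasing sequence below $\delta$ starting at $\delta$, so they take only finitely many values and stabilize at some stage $\zeta(\delta)<\theta$. Crucially, because $\theta<\lambda$ and $\lambda$ is regular, $E=\bigcap_{\zeta<\theta}E_\zeta$ is again club in $\lambda$, and $E\cap\delta$ is club in $\delta$ whenever $\delta$ is a limit point of $E$.

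The heart of the matter, and the step I expect to be the main obstacle, is to show that the limit club $E$ is nonetheless guessed on a $J$-positive set, contradicting that every $E_\zeta$—and hence $E$—was chosen to witness failure. The difficulty is genuine: the non-accumulation points of $C^0_\delta$ are non-limit points, while $E\cap\delta$ is a club, so cofinal survival of $\nacc(C^0_\delta)$ through the length-$\theta$ intersection cannot be arranged by naively intersecting clubs. This is exactly where the remaining hypotheses enter in tandem. The hypothesis $\cf(\delta)\neq\theta$ forces the stabilized value $\gamma_{\zeta(\delta)}(\delta)$ to be governed by a single $E_\zeta$ rather than escaping only cofinally through the $\theta$ stages, so that the surviving $A$-points below $\delta$ are controlled at one stage. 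The non-reflection of $A$, through the clubs $c_\delta$ disjoint from $A$, prevents these surviving non-accumulation points from leaking into $E\setminus A$. Finally, the sets $B_\zeta=\{\delta\in S:\gamma_\zeta(\delta)<\delta\}$ are $\subseteq$-increasing in $\zeta$, and $\theta$-indecomposability of $J$ is what lets me pass from the per-$\delta$ stabilization to a global conclusion about the union $\bigcup_{\zeta<\theta}B_\zeta$, yielding that the guessing set for $E$ is $J$-positive. Assembling these three ingredients produces the contradiction, so some derived ladder—one whose non-accumulation points lie in $A$—guesses modulo $J$ for every club, which is the desired conclusion.
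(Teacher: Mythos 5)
A preliminary remark: the paper itself offers no proof of this statement (it is quoted as Claim 0.14 of Shelah's \emph{More Jonsson Algebras}), so your attempt has to stand on its own. Unfortunately it has a genuine gap, and it is located exactly where you predicted the ``main obstacle'' would be, but the hypotheses cannot rescue the construction you set up. The fatal point is that your derived ladders are prune-only: at stage $\zeta$ the ladder is the closure of $\nacc(C^0_\delta)\cap E_\zeta$, so its non-accumulation points are exactly $\nacc(C^0_\delta)\cap E_\zeta$, a subset of the \emph{fixed} set $\nacc(C^0_\delta)$. Consequently the derived ladder guesses the club $E_\zeta$ at $\delta$ if and only if $\nacc(C^0_\delta)\cap E_\zeta$ is cofinal in $\delta$. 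Now run your recursion: the stage-$0$ failure produces a club $E_1$ such that $\{\delta\in S:\sup(\nacc(C^0_\delta)\cap E_1)=\delta\}\in J$. But then $E_1$ is automatically a witness of failure for the stage-$1$ ladder as well, since the stage-$1$ ladder's nacc points all lie in $E_1\cap A$ and are cofinal in $\delta$ precisely on that same $J$-small set. So the adversary may take $E_{\zeta+1}=E_1$ for every $\zeta$; the iteration never makes progress, $E=\bigcap_\zeta E_\zeta=E_1$, and no contradiction can emerge. Equivalently: with prune-only ladders the guessing sets $G_\zeta$ are \emph{decreasing} in $\zeta$, so their union is $G_0\in J$ and there is nothing for $\theta$-indecomposability to do. This also shows your bookkeeping with $B_\zeta=\{\delta:\gamma_\zeta(\delta)<\delta\}$ is inverted: under the failure hypothesis each $B_{\zeta+1}$ is $J$-\emph{large} (its complement in $S$ is the guessing set, which the adversary has put into $J$), whereas indecomposability only says that increasing $\theta$-unions of sets \emph{in} $J$ stay in $J$; applied to $J$-large sets it yields nothing.

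What is missing is a mechanism by which the ladders are \emph{rebuilt} at each stage from fresh material in $A\cap E_\zeta$, with a selection rule that is coherent across stages, so that a point selected at stage $\zeta$ which happens to survive into all later clubs is still selected later; only then do the relevant guessing sets become $\subseteq$-increasing in $\zeta$, which is the configuration $\theta$-indecomposability can exploit. Making such a rule work is the real content of the theorem: naive replenishment (say, choosing the least point of $A\cap E_\zeta$ in each interval determined by a club $c_\delta$ disjoint from $A$) fails because those minima are \emph{non-decreasing} as the clubs shrink and may increase strictly $\theta$ times, converging to points of cofinality $\theta$ outside $A$; one needs instead invariants that are non-increasing in $\zeta$ (projections $\alpha\mapsto\sup(E_\zeta\cap\alpha)$, or sups of $A\cap E_\zeta$ below fixed separators), which stabilize after finitely many steps, together with the non-reflection of $A$ (via the clubs $c_\delta$, which guarantee that selected $A$-points remain non-accumulation points of the rebuilt ladders) to force the stabilized values to be attained in $A\cap\bigcap_\zeta E_\zeta$. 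The hypothesis $S\cap\cof(\theta)=\emptyset$ then gives, for each $\delta$, a single stage $\zeta<\theta$ by which cofinally many points below $\delta$ have stabilized. Your instinct about which hypothesis does which job is roughly right, but the construction you propose cannot support that division of labor, and the argument as written collapses at stage $1$.
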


\changed{As in the proof that an inaccessible J\'onsson cardinal must be at least Mahlo, we will show that cardinals of small Mahlo degree must carry a co-J\'onsson ideal and generate a contradiction from this.}

\begin{theorem}
	\changed{Suppose that $\lambda$ is Mahlo and J\'onsson, and set} 
	
	\begin{equation*}
	\changed{A=\{\alpha <\lambda : 0\leq\Mahlo(\alpha)<\alpha\times n + \beta \}.}
	\end{equation*}
	\\
	\changed{If $\Mahlo(\lambda)=\lambda\times n + \beta<\lambda\times \omega$, then there is a co-J\'onsson filter on $\lambda$ which concentrates on $A$ and extends the club filter.}
\end{theorem}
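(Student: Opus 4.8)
The plan is to construct the filter as the dual of a club-guessing ideal, in direct analogy with \cref{idpci is Jonsson}, replacing the non-stationary ideal there by a co-J\'onsson ideal and arranging the guessing to land in $A$. First I would record the two facts about $A$ that make this possible. Since $\Mahlo(\lambda)=\lambda\times n+\beta$, the corollary following \cref{rank below} tells us that $\{\delta<\lambda:\Mahlo(\delta)\geq\delta\times n+\beta\}$ is non-stationary; as $\lambda$ is Mahlo the inaccessibles below $\lambda$ are stationary, so $A$ agrees with the set of inaccessibles modulo $\NS_\lambda$ and is in particular stationary. Thus a filter concentrating on $A$ is the same thing, modulo clubs, as one concentrating on the inaccessibles, and \cref{AM} becomes directly applicable: for any J\'onsson model $M$ the set $\{\alpha<\lambda:\beta_M(\alpha)\text{ is inaccessible}\}$ has $\lambda$-rank at most $\Mahlo(\lambda)$.

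Next I would fix the parameters. Using \cref{Mahlo Jonsson} and the discussion preceding \cref{idpci is Jonsson}, fix a co-J\'onsson ideal $J\supseteq\NS_\lambda$; by \cref{Indecomposable} it is $\theta$-indecomposable for some regular $\theta<\lambda$. I would then choose a stationary $S\in J^+$ of limit ordinals with $S\cap A=\emptyset$ and $S\cap\cof(\theta)=\emptyset$, and apply \cref{Club Guessing} to obtain an $S$-club system $\bar C$ whose non-accumulation points guess into $A$: for every club $E$, the set $\{\delta\in S:\delta=\sup(E\cap\nacc(C_\delta)\cap A)\}$ is $J$-positive. Let $x$ code $\bar C$, $S$, $A$, and a witness to $J$'s being co-J\'onsson, and set $F=I(\bar C,J)^*$. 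By construction $F$ extends the club filter (since $I(\bar C,J)\supseteq\NS_\lambda$), and it concentrates on $A$ because the guessing forces $\nacc(C_\delta)\cap(\lambda\setminus A)\in I_\delta$ for $J$-almost every $\delta$, that is, $\lambda\setminus A\in I(\bar C,J)$.

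The heart of the argument is showing that $F$ is co-J\'onsson, i.e.\ that $M\cap\lambda\in F$ --- equivalently $\lambda\setminus(M\cap\lambda)\in I(\bar C,J)$ --- for every $x$-J\'onsson model $M$. Following the template of \cref{idpci is Jonsson}, I would first show that $\cf(\beta_M(\delta))<\delta$ for the relevant $\delta\in S$: here \cref{AM} does the work, since it guarantees that $\beta_M(\delta)$ fails to be inaccessible for all but a rank-$\Mahlo(\lambda)$ set of $\delta$, and a non-inaccessible limit cardinal $\beta_M(\delta)\in M$ must be singular, with $\cf(\beta_M(\delta))$ definable in $M$ and hence below $\delta$. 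With this in hand, \cref{Swallowing Ladders} applies and swallows every $\alpha\in\nacc(C_\delta)\cap E$ with $\cf(\alpha)>\cf(\beta_M(\delta))$ into $M$; since the guessed points lie in $A$ and are therefore inaccessible, a tail of $\nacc(C_\delta)\cap A\cap E$ is contained in $M$, whence $\nacc(C_\delta)\cap(\lambda\setminus(M\cap\lambda))\cap E\in I_\delta$ for $J$-almost every $\delta$, giving $\lambda\setminus(M\cap\lambda)\in I(\bar C,J)$ as required.

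I expect the main obstacle to be the compatibility of the hypotheses of \cref{Club Guessing} with Tryba--Woodin reflection: since \cref{Tryba-Woodin} forbids genuinely non-reflecting stationary subsets of the regular J\'onsson cardinal $\lambda$, the ``not reflecting outside itself'' clause must be arranged by passing to a reflection-closed stationary refinement of $A$ (so that $A$ reflects only at inaccessible points, which lie in $A$ modulo $\NS_\lambda$) and choosing $S$ accordingly. One must simultaneously check that the co-J\'onsson ideal $J$ can be taken $\theta$-indecomposable for a $\theta$ that is admissible in \cref{Club Guessing} and small enough that the guessed inaccessibles eventually overtake $\cf(\beta_M(\delta))$ in the swallowing step. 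Once these parameters are pinned down, the rank bookkeeping supplied by \cref{AM} and the cofinality accounting in the swallowing argument are routine, and the verification that $M\cap\lambda\in F$ proceeds exactly as in \cref{idpci is Jonsson}.
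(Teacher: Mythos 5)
Your proposal reproduces the second half of the paper's argument (the application of \cref{Club Guessing}, the swallowing step via \cref{Swallowing Ladders}, and the singularity of $\beta_M(\delta)$), but its very first step is circular, and that step is where the actual content of the theorem lies. You ``fix a co-J\'onsson ideal $J\supseteq\NS_\lambda$'' by appeal to \cref{Mahlo Jonsson} and the discussion preceding \cref{idpci is Jonsson}; neither supplies one here. The discussion preceding \cref{idpci is Jonsson} produces a stationary set not reflecting at inaccessibles only under the hypothesis that $\lambda$ is \emph{not} $\omega$-Mahlo, and \cref{Mahlo Jonsson} constructs its co-J\'onsson ideal only inside a proof by contradiction, under the assumption that $\lambda$ is \emph{not} Mahlo. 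In the present theorem $\lambda$ is Mahlo, and in the intended application $\Mahlo(\lambda)=\lambda\times n+\beta$ can be $\geq\lambda$, so no earlier result hands you any co-J\'onsson ideal --- indeed, the existence of such an ideal is exactly what the theorem asserts. The paper's proof avoids this by taking $J$ to be something else entirely: the $\theta$-indecomposable completion of the rank ideal $I^{\rk}_\gamma$ for $\gamma=\Mahlo(\lambda)$, where $\theta$ is chosen via \cref{Galvin Hajnal} so that $\GH(\theta,\lambda)=\lambda$. Properness of this $J$ is then precisely \cref{Rank Bound} together with \cref{Rank Idelas}: $J\subseteq I^{\rk}_{<\lambda\times(n^*+1)}\subseteq I^{\rk}_{\lambda+\gamma}$ while $S^\lambda_{>\theta}\notin I^{\rk}_{\lambda+\gamma}$. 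This Galvin--Hajnal/rank machinery (the point of Section 3) is absent from your write-up, and without it you have no admissible ideal to feed into \cref{Club Guessing}.

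The choice of $J$ is not cosmetic; it is used a second time in the swallowing step, where your proposal also gaps. \cref{AM} only gives $\rk_\lambda(A_M)\leq\Mahlo(\lambda)=\gamma$ for $A_M=\{\alpha<\lambda : \beta_M(\alpha)\text{ is inaccessible}\}$, i.e.\ $A_M\in I^{\rk}_\gamma$. To conclude $\lambda\setminus(M\cap\lambda)\in I(\bar C, J)\upharpoonright A$ you need the bad ordinals $\delta\in S$ --- those with $\beta_M(\delta)$ inaccessible --- to form a $J$-null set; with the paper's $J$ this is immediate because $I^{\rk}_\gamma\subseteq J$ (which is why the paper can remove $A\cup A_M$ from $S$ and keep $S$ $J$-positive, as $S^\lambda_{>\theta}\in J^+$), but for an arbitrary co-J\'onsson ideal there is no relation whatsoever between $J$ and the rank ideals, so this step fails. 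Separately, your justification that the filter concentrates on $A$ is wrong: \cref{Club Guessing} gives that $E\cap\nacc(C_\delta)\cap A$ is unbounded in $\delta$ for $J$-positively many $\delta$; it does not give $\nacc(C_\delta)\cap(\lambda\setminus A)\in I_\delta$, so $I(\bar C,J)^*$ need not contain $A$. One must instead restrict the ideal, taking the dual of $I(\bar C,J)\upharpoonright A$, which concentrates on $A$ by construction and whose properness is what the guessing (plus the inaccessibility of the guessed points) actually delivers.
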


\begin{proof}
	 Assume that $\lambda$ is J\'onsson, and let $\gamma=\lambda\times n^*+\beta^*<\lambda\times\omega$ be the Mahlo degree of $\lambda$. Note that $A$ only reflects at inaccessibles and that, by \cref{rank below}, club almost every inaccessible below $\lambda$ is in $A$. So, by removing a non-stationary subset of $A$, we may assume that $A$ does not reflect outside of itself.  By \cref{Galvin Hajnal}, let  $\theta<\lambda$ be an uncountable regular cardinal such that $\GH(\theta,\lambda)=\lambda$. Next, we note that $A\in I^{\rk}_\gamma$, while $S^\lambda_{>\theta}\notin I^{\rk}_{\lambda+\gamma}$ by \cref{Rank Idelas}, and that $\lambda+\gamma=\lambda\times(n^*+1)+\beta^*>\gamma$. Let $J$ be the $\theta$-indecomposable completion of $I^{\rk}_\gamma$, and recall from the discussion after the proof of \cref{Rank Bound} that $J\subseteq I^{\rk}_{\lambda+\gamma}$. 
	
	\changed{Next, let }
	
	\begin{equation*}
	\changed{A_M=\{\alpha < \lambda : \beta_M(\alpha)\text{ is inaccessible}\},}
	\end{equation*}
	\\
	and note that $A_M\in J$ by \cref{AM}. Thus, it follows that the set
	
	\begin{equation*}
	S=\{\delta<\lambda : \cf(\delta)>\theta,\text{ and }|\delta|=\delta\}\setminus (A\cup A_M)
	\end{equation*}
	\\
	is $J$-positive since $A\in J$ and $S^\lambda_{>\theta}\in J^+$. Thus, $A$, $J$, $S$, $\theta$, and $\lambda$ satisfy the assumptions of \cref{Club Guessing} and so there is an $S$-club system $\bar C=\langle C_\delta : \delta\in S\rangle$ such that 
	
	\begin{equation*}
	\{\delta\in S : \delta=\sup(E\cap\nacc(C_\delta)\cap A)\}\in J^+
	\end{equation*}
	\\
	for every club $E\subseteq \lambda$. \changed{That is, such that $I(\bar C, J)\upharpoonright A$ is proper (since $A$ is a set of inaccessibles, the fact that $E\cap\nacc(C_\delta)\cap A$ is unbounded is enough to guarantee that the set is unbounded in cofinality as well)}. Now, let $M$ be a J\'onsson model for $\lambda$ with $S, \bar C\in M$. Set 
	
	\begin{equation*}
	E=\{\delta<\lambda : \sup(M\cap\delta)=\delta\}.
	\end{equation*}
	\\
	By \cref{Swallowing Ladders}, for every $\delta\in S\cap E$, we have either\\
	
	\begin{enumerate}
		\item $\delta\in M\implies\nacc(C_\delta)\cap E\subseteq M$, or
		\item $\delta\notin M\implies\nacc(C_\delta)\cap E\cap \mathrm{cof}(>\cf(\beta_M(\delta)))\subseteq M$.\\
	\end{enumerate}

	\changed{Suppose that $\delta\in S\cap E$, and note that for all such $\delta$, $\beta_M(\delta)$ is not inaccessible. Thus, we can argue as in the proof of \cref{idpci is Jonsson} to conclude that $\lambda\setminus(M\cap\lambda)\in I(\bar C, J)\upharpoonright A$.}
\end{proof}

\begin{theorem}[Shelah]
	If $\lambda$ is an inaccessible J\'onsson cardinal, then $\lambda$ must be at least $\lambda\times\omega$-Mahlo.
\end{theorem}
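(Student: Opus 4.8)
The plan is to argue by contradiction via a minimal counterexample, using the theorem immediately preceding this one as the engine. Suppose the statement fails, and let $\lambda$ be the least inaccessible J\'onsson cardinal with $\Mahlo(\lambda)<\lambda\times\omega$. By \cref{Mahlo Jonsson} we know $\lambda$ is at least Mahlo, so $\Mahlo(\lambda)\geq 1$ and we may write $\Mahlo(\lambda)=\lambda\times n+\beta$ for some $n<\omega$ and $\beta<\lambda$ with $\lambda\times n+\beta<\lambda\times\omega$. The goal is then to extract from $\lambda$ a strictly smaller inaccessible J\'onsson cardinal of Mahlo degree below $\lambda\times\omega$, which will contradict the minimality of $\lambda$.

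To do this, I would feed $\lambda$ into the preceding theorem. Since $\lambda$ is Mahlo and J\'onsson with $\Mahlo(\lambda)=\lambda\times n+\beta<\lambda\times\omega$, that theorem supplies a co-J\'onsson filter $F$ on $\lambda$ which extends the club filter and concentrates on $A=\{\alpha<\lambda:0\leq\Mahlo(\alpha)<\alpha\times n+\beta\}$. Its dual $I$ is then a co-J\'onsson ideal extending the non-stationary ideal, so \cref{small Jonsson} applies and tells us that the set of regular non-J\'onsson cardinals below $\lambda$ lies in $I$; dually, its complement lies in $F$. Every element of $A$ is inaccessible, hence a regular cardinal, so intersecting $A\in F$ with this complement shows that the set $B$ of inaccessible J\'onsson cardinals $\alpha<\lambda$ with $\Mahlo(\alpha)<\alpha\times n+\beta$ belongs to $F$.

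Finally, because $F$ is a proper filter extending the club filter, $B$ is stationary, and in particular unbounded in $\lambda$; so I may pick $\alpha\in B$ with $\alpha>\beta$. For such an $\alpha$ we have $\Mahlo(\alpha)<\alpha\times n+\beta<\alpha\times n+\alpha=\alpha\times(n+1)<\alpha\times\omega$, using $\beta<\alpha$. Thus $\alpha$ is an inaccessible J\'onsson cardinal strictly below $\lambda$ whose Mahlo degree is below $\alpha\times\omega$, contradicting the minimality of $\lambda$. Now that the preceding theorem and \cref{small Jonsson} are available, the main obstacle is purely bookkeeping: one must ensure the object produced really is a counterexample at a smaller cardinal. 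This is precisely why the choice $\alpha>\beta$ (which makes the ordinal arithmetic $\alpha\times n+\beta<\alpha\times\omega$ go through) and the properness of $F$ (guaranteeing $B\neq\emptyset$) are essential rather than cosmetic.
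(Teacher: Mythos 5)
Your proposal is correct and is essentially the paper's own argument: both run a well-foundedness argument whose engine is the preceding theorem (Theorem 5.2) combined with \cref{small Jonsson} --- the former supplying a proper co-J\'onsson filter extending the club filter and concentrating on the set $A$ of inaccessibles of small Mahlo degree, the latter forcing filter-many members of $A$ to be J\'onsson. The only difference is organizational: the paper inducts on the Mahlo degree $\lambda\times n+\beta$ (so its inductive hypothesis makes every member of $A$ a regular non-J\'onsson cardinal, placing $A$ in the ideal and its dual simultaneously), whereas you take the least counterexample cardinal and extract a smaller one; your explicit choice of $\alpha>\beta$ carries out the same ordinal arithmetic that the paper's induction on degrees handles implicitly.
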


\begin{proof}
	\changed{We proceed by induction on the Mahlo degree of $\lambda$. Note that the base case, where $\Mahlo(\lambda)=0$ is taken care of by \cref{Mahlo Jonsson}. So  let $\gamma=\lambda\times n + \beta<\lambda\times\omega$, and suppose that the result holds for all $\gamma'<\gamma$ $\lambda$. Suppose further, for the sake of contradiction, that there is an inaccessible J\'onsson cardinal $\lambda$ with $\Mahlo(\lambda)=\gamma<\lambda\times\omega$. Let }
	
	\begin{equation*}
	\changed{A=\{\alpha <\lambda : 0\leq\Mahlo(\alpha)<\alpha\times n + \beta \}.}
	\end{equation*}
	\\
	\changed{By Theorem 5.2, there is a co-J\'onsson ideal $I$ on $\lambda$ extending the non-stationary ideal with $A\in I^*$. But then, our inductive assumptions tell us that every $\alpha$ in $A$ is a regular non-J\'onsson, which contradicts \cref{small Jonsson}.} 
\end{proof}

\section{Questions}

One thing to note about the previous proof is that we heavily leveraged the fact that $\lambda$ was not $\lambda\times\omega$-Mahlo in order to arrive at a contradiction. In particular, we used the fact that there \changed{is} sizable a gap between the rank of the set of inaccessibles and the rank of $\lambda$. This does not happen when the Mahlo degree of $\lambda$ is at least $\lambda\times\omega$-Mahlo. So there seems to be little hope of pushing this result further by a similar argument using the same notion of rank. This, of course, leads us to our first question.

\begin{Question}
	Is it consistent that there is an inaccessible J\'onsson cardinal with Mahlo degree precisely $\lambda\times\omega$?
\end{Question}

Another interesting thing to note regarding inaccessible J\'onsson cardinals is that the only time we constructed J\'onsosn filters on inaccessible J\'onsson cardinals was for the purpose of generating a contradiction. In fact, we only know how to construct J\'onsson filters on J\'onsson successors of singular cardinals. So, we have no consistent examples of the existence of a J\'onsson filter.

\begin{Question}
	Is it consistent that there is a J\'onsson cardinal $\lambda$ such that $\lambda$ has a J\'onsson filter?
\end{Question}

\bibliographystyle{plain}

\end{document}